\documentclass[11pt,a4paper]{article}
\usepackage[T1]{fontenc}
\usepackage[utf8]{inputenc}
\usepackage{lmodern,amsmath,amssymb,amsthm,mathtools,mathrsfs,txfonts}
\usepackage[margin=35mm]{geometry}
\usepackage{microtype,booktabs,array,enumitem,xcolor}
\usepackage[colorlinks=true,linkcolor=black,citecolor=blue!50!black,urlcolor=blue!50!black]{hyperref}
\usepackage[nameinlink,noabbrev]{cleveref}

\setlist{itemsep=3pt,topsep=5pt}
\newcolumntype{P}[1]{>{\raggedright\arraybackslash}p{#1}}
\newtheorem{theorem}{Theorem}[section]
\newtheorem{maintheorem}{Theorem}

\crefname{maintheorem}{Theorem}{Theorems}
\Crefname{maintheorem}{Theorem}{Theorems}
\newtheorem{proposition}[theorem]{Proposition}
\newtheorem{lemma}[theorem]{Lemma}
\newtheorem{corollary}[theorem]{Corollary}
\theoremstyle{definition}

\newtheorem{remark}[theorem]{Remark}

\crefname{theorem}{Theorem}{Theorems}
\crefname{proposition}{Proposition}{Propositions}
\crefname{lemma}{Lemma}{Lemmas}
\crefname{corollary}{Corollary}{Corollaries}
\crefname{definition}{Definition}{Definitions}
\crefname{example}{Example}{Examples}
\crefname{remark}{Remark}{Remarks}
\crefname{section}{Section}{Sections}

\newcommand{\R}{\mathbb R}
\newcommand{\Q}{\mathbb Q}
\newcommand{\Z}{\mathbb Z}
\newcommand{\C}{\mathbb C}

\newcommand{\HH}{\mathcal H}
\newcommand{\Mod}{\operatorname{Mod}}
\newcommand{\Sp}{\operatorname{Sp}}
\newcommand{\SL}{\operatorname{SL}}
\newcommand{\GL}{\operatorname{GL}}
\newcommand{\Aut}{\operatorname{Aut}}
\newcommand{\Hom}{\operatorname{Hom}}
\newcommand{\tor}{\operatorname{tor}}

\newcommand{\Ad}{\operatorname{Ad}}

\newcommand{\im}{\operatorname{im}}
\newcommand{\ab}{\mathrm{ab}}
\newcommand{\rt}{(T)}
\newcommand{\eps}{\varepsilon}

\newcommand{\Ga}{\mathbf G_a}
\newcommand{\Lie}{\operatorname{Lie}}
\newcommand{\Ru}{R_{\mathrm u}}
\newcommand{\un}{\mathrm{un}}
\title{Property \texorpdfstring{$\rt$}{(T)} and nonlinearity of mapping class group quotients}
\author{Piotr W. Nowak}

\begin{document}
\maketitle
\begin{abstract}
We give a general method for proving Kazhdan's property~\(\rt\)
for quotients \(G/K_{[c+1]}\), where \(G\) is countable, \(K\) is a normal subgroup
and \(K_{[j]}\) denotes its lower central series. The method
combines an affine realization of \(G/[K,K]\), a contraction
argument, and permanence for nilpotent normal subgroups.

We apply it to the Torelli lower-central quotients
\(\Mod(\Sigma_g)/(\mathcal{T}_g)_{[c+1]}\) for \(g\ge3\) and show that they have property~\(\rt\) for every \(c\ge1\).
We also prove
property~\(\rt\) for
\(\Aut(F_3)/(\mathrm{IA}_3)_{[c+1]}\) for every \(c\ge1\), relating
these groups to the tame nilpotent images studied by
Lubotzky and Pak. 

For the Torelli lower-central quotients with \(g\ge3\) and
\(c\ge2\), every finite-dimensional complex representation
has infinite kernel, and these quotients are not linear over
any field.
\end{abstract}

\section{Introduction}

Kazhdan's property~\(\rt\) is a rigidity property of unitary
representations: almost invariant vectors force the existence of
a nonzero invariant vector. For countable discrete groups it is
equivalent to the fixed point property for affine isometric actions
on Hilbert spaces. The examples coming from higher-rank Lie groups
and their lattices have been an important source of both results and
questions in geometric group theory. See \cite{BHV} for an overview of property \(\rt\) and its applications.

Two natural families of interest in this context are automorphism groups of free
groups and mapping class groups. 
For the free group \(F_n\) of rank \(n\), the group \(\Aut(F_n)\)
has property~\(\rt\) for every \(n\ge4\): see \cite{KalubaNowakOzawa} for the case \(n=5\), \cite{KalubaKielakNowak} for the case \(n\ge 5\) and \cite{NitscheAutFour} for \(n=4\).
In rank three, \(\Aut(F_3)\) does not have property~\(\rt\),
as follows from work of McCool \cite{McCool} and Grunewald and Lubotzky
\cite{GrunewaldLubotzky}.
For a closed connected oriented surface \(\Sigma_g\) of genus
\(g\ge3\), it remains open whether \(\Mod(\Sigma_g)\)
has property~\(\rt\).

In this paper we give a general method for proving property~\(\rt\)
for quotients obtained by replacing a normal subgroup with its
nilpotent quotients. The method applies to both families above.
It uses the action on the abelianization of the normal subgroup,
together with an affine realization of the resulting extension.
Once this first quotient has property~\(\rt\), a permanence theorem
for nilpotent normal subgroups gives all subsequent stages.
The first of our main results isolates the hypotheses needed for our
principal applications.

For a group \(K\), denote its lower central series by
\[
 K_{[1]}=K,\qquad K_{[j+1]}=[K,K_{[j]}].
\]

\begin{maintheorem}\label{thm:strategy}
Let \(G\) be a finitely generated group and \(K\subseteq G\)
a normal subgroup with finitely generated abelianization. Put
\[
 A=\bigl(K/[K,K]\bigr)/\tor\bigl(K/[K,K]\bigr),\qquad
 V=A\otimes_\Z\R,\qquad \Gamma=G/K.
\]
Suppose that:
\begin{enumerate}[label=\textup{(\roman*)}]
 \item \(\Gamma\) is a lattice in a second-countable unimodular
 locally compact group \(S\) with property~\(\rt\);
 \item the conjugation action of \(\Gamma\) on \(V\) extends to
 a continuous representation \(\rho:S\to\GL(V)\) satisfying
 \(\lvert\det\rho(s)\rvert=1\) for every \(s\in S\);
 \item \(V\) is spanned by vectors \(v\) for which there is a
 sequence \((s_j)\) in \(S\) such that \(\rho(s_j)v\to0\);
 \item some element \(z\) of the center of \(\Gamma\) fixes no
 nonzero vector of \(V\).
\end{enumerate}
Then \(G/K_{[c+1]}\) has property~\(\rt\) for every \(c\ge1\).
More generally, every quotient of \(G\) in which the image of \(K\)
is nilpotent has property~\(\rt\).
\end{maintheorem}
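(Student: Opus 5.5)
The plan has three stages. First show that \(G/[K,K]\) has property~\(\rt\) by proving it for the torsion-free quotient \(Q=G/K'\), where \(K'\supseteq[K,K]\) is the preimage of \(\tor(K/[K,K])\). Then lift from \(Q\) back to \(G/[K,K]\). Finally climb the lower central series using permanence for nilpotent normal subgroups.

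\textbf{Stage 1: property \(\rt\) for \(Q\).} The group \(Q\) is an extension \(1\to A\to Q\to\Gamma\to1\) with \(A\cong\Z^n\) a lattice in \(V\), and \(\Gamma\) acts on \(V\) via \(\rho|_\Gamma\).

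\emph{Splitting up to finite index.} Use hypothesis~(iv). Lift the central element \(z\) to \(\tilde z\in Q\). Since \(z\) fixes no nonzero vector, \(\rho(z)-1\) is invertible on \(V\). Hence the extension cocycle class in \(H^2(\Gamma,A)\) becomes trivial after passing to \(\Gamma\)-equivariant maps into \(V\). Concretely, centrality of \(z\) gives \(H^*(\Gamma,V)=0\), by the standard "center kills cohomology" argument: \(z\) acts trivially on cohomology but also by \(\rho(z)\), and \(\rho(z)-1\) is invertible. So the class in \(H^2(\Gamma,A)\) is torsion, say of order \(m\). Then \(Q\) embeds, up to finite index, into \(A'\rtimes\Gamma\) with \(A'=\tfrac1m A\). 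Equivalently, \(Q\) acts on \(V\) by affine maps with linear part \(\rho\) and translation part extending the lattice \(A\). This is the "affine realization".

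\emph{Lattice in \(V\rtimes S\).} Consequently \(Q\) is commensurable with a lattice in \(V\rtimes_\rho S\). It is indeed a lattice: \(A\) is cocompact in \(V\), \(\Gamma\) is a lattice in \(S\), and unimodularity of \(V\rtimes S\) follows from \(\lvert\det\rho\rvert=1\) together with unimodularity of \(S\). Second countability is given.

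\emph{Property \(\rt\) for \(V\rtimes_\rho S\).} Use the relative property \(\rt\) for \((V\rtimes S,V)\) via the contraction argument. Take a unitary representation with almost invariant vectors, and suppose its restriction to \(V\) has no invariant vectors. Let \(\mu\) be the projection-valued measure on \(\hat V=V^*\). Almost invariant vectors concentrate mass of \(\mu\) near \(0\), while \(\mu(\{0\})=0\). By hypothesis~(iii) and duality, \(S\)-orbits in \(V^*\setminus\{0\}\) reach far out: \(\rho(s_j)v\to0\) means \(\rho^*(s_j)^{-1}\) expands the dual pairing with \(v\). A Howe–Moore/Mautner style argument then shows that every \(V\)-invariant-free representation of \(V\rtimes S\) has no almost invariant vectors. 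Combined with property \(\rt\) of \(S\), this gives an invariant vector.

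The Mautner-type step is where I expect the \textbf{main obstacle}. Making the contraction hypothesis~(iii), which concerns vectors of \(V\), control the spectral measure on \(V^*\) needs care. I would first try the following. Decompose \(V=V_0\oplus V_1\), where \(V_1\) is the span of contractible vectors; by~(iii), \(V_1=V\). Then apply the Mautner phenomenon: any \(S\)-invariant vector is invariant under all \(v\) with \(\rho(s_j)v\to0\), via \(\pi(s_j)^{-1}\pi(v)\pi(s_j)=\pi(\rho(s_j)^{-1}\cdot)\), in the usual form \(\lVert\pi(v)\xi-\xi\rVert=\lVert\pi(\rho(s_j)v)\xi-\xi\rVert\to0\).

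This suggests a cleaner route. Apply property \(\rt\) of \(S\) to the restricted representation to get an \(S\)-invariant vector. By Mautner it is \(V\)-invariant, since \(V\) is spanned by contractible vectors. Hence it is \(V\rtimes S\)-invariant. This needs no relative property \(\rt\) at all; I would adopt this route. Property \(\rt\) then passes to the lattice and to commensurable groups.

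\textbf{Stage 2: from \(Q\) to \(G/[K,K]\).} The group \(G/[K,K]\) is an extension of \(Q\) by the finite group \(\tor(K/[K,K])\). Finite kernels preserve property \(\rt\).

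\textbf{Stage 3: climbing the lower central series.} Let \(H\) be a quotient of \(G\) in which the image \(N\) of \(K\) is nilpotent of class \(c\). The quotient \(H/[N,N]\) is a quotient of \(G/[K,K]\), so it has property~\(\rt\). I would invoke the permanence theorem the paper announces: if \(N\trianglelefteq H\) is nilpotent and \(H/[N,N]\) has property \(\rt\), then \(H\) has property \(\rt\). The underlying reason is the following. The successive quotients \(N_{[j]}/N_{[j+1]}\) are \(H\)-equivariant quotients of \(N^{\ab}\otimes\cdots\otimes N^{\ab}\). Hence any affine action or cocycle on each layer is controlled by the finitely generated, \(\rt\)-type action on \(N^{\ab}\). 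One then proceeds by induction on \(j\), using the central extension \(N_{[j]}/N_{[j+1]}\) inside \(H/N_{[j+1]}\) and the fact that \(H_1\) of a property-\(\rt\) group is finite to kill the relevant central extension classes. Taking \(H=G/K_{[c+1]}\) yields the main statement.
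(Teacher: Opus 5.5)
Your Stages 1 and 2 follow the paper. Getting the crossed homomorphism from the vanishing of \(H^2(\Gamma;V)\) forced by the central element is the cohomological form of \cref{prop:centralaffine}, which the paper records after \cref{thm:generalcriterion}. Your lattice and Mautner steps are \cref{prop:affinelattice,prop:affineT}.

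There is one small slip. \(H^2(\Gamma;V)=0\) alone does not make the class \(e\in H^2(\Gamma;A)\) torsion. To get torsion, rerun the central-element argument with coefficients in \(A\): this gives \(D_*e=0\) for \(D=\rho(z)-1\). Applying the adjugate of \(D\), which is a polynomial in \(D\) and hence equivariant, then gives \(\lvert\det D\rvert\,e=0\). You do not actually need torsion, since vanishing in \(H^2(\Gamma;V)\) already yields the affine realization.

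Stage 3 is sound only as a citation of \cref{thm:nilpotentpermanence}. The paper deduces that from the Chatterji--Witte Morris--Shah theorem (\cref{thm:cwms}) together with \cref{lem:normalextension}. The mechanism you offer as its underlying reason would not work, for three reasons.
\begin{enumerate}
\item The layer \(N_{[j]}/N_{[j+1]}\) is central in \(N/N_{[j+1]}\) but not in \(H/N_{[j+1]}\).
\item Contraction does not pass from \(V\) to the tensor powers of which the layers are quotients, because weights cancel: a weight \(1\) and a weight \(-1\) vector of \(V_g\) wedge to weight \(0\).
\item Killing central extension classes would destroy \(\rt\) rather than prove it. If \(1\to\Z\to E'\to Q\to1\) is central with torsion class, a pushout along \(\Z\xrightarrow{\times n}\Z\) splits, so \(E'\) maps onto an infinite cyclic group.
\end{enumerate}

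This situation actually occurs in the application. Hain's central line of \cref{thm:hainkernel}(iii) is fixed by \(\Sp_{2g}(\Z)\), since it is one-dimensional and \(\Sp_{2g}(\Z)\) is perfect. An equivariant projection of the second layer \(\bigl((\mathcal{T}_g)_{[2]}/(\mathcal{T}_g)_{[3]}\bigr)\otimes\Q\) onto this line has infinite cyclic image. It yields a quotient of \(\mathcal{Q}_{g,2}\) that is a central extension of \(\mathcal{Q}_{g,1}\) by \(\Z\), with nothing in its kernel to contract. This quotient has \(\rt\) by \cref{thm:main}, so its class is not torsion. Hence neither contraction nor killing of classes can be what makes it Kazhdan.

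What does the work is the content of \cref{thm:cwms}: directions lying in \([N,N]\) inherit relative \(\rt\) from \(N/[N,N]\) through the commutator structure, via an argument about unitary representations. Keep the citation and drop the sketch.
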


In both applications the central element in~(iv) acts as \(-I\).
A lift of this element provides affine coordinates for
\(G/[K,K]\), after removing the finite torsion in its abelian
kernel. No splitting of the extension is required.
The contraction assumption~(iii) gives property~\(\rt\) for
\(V\rtimes S\), and the affine realization identifies the
discrete group as a lattice in it. Finally, the theorem of
Chatterji, Witte Morris and Shah \cite{CWMS} implies that
\[
 G/K_{[c+1]}\text{ has property }\rt
 \quad\Longleftrightarrow\quad
 G/[K,K]\text{ has property }\rt,\qquad c\ge1.
\]
Thus no separate analysis of the higher lower-central quotients
of \(K\) is needed. We prove a more general affine criterion in
\cref{thm:generalcriterion}, from which \cref{thm:strategy} follows.

We first apply this method to mapping class groups. Denote by
\(H_g=H_1(\Sigma_g;\Z)\) the first homology of the surface and by
\[
 \mathcal{T}_g
 =\ker\bigl(\Mod(\Sigma_g)\longrightarrow\Sp_{2g}(\Z)\bigr)
\]
the Torelli group. For \(c\ge1\), set
\[
 \mathcal{Q}_{g,c}=\Mod(\Sigma_g)/(\mathcal{T}_g)_{[c+1]},
 \qquad
 \mathcal{N}_{g,c}=\mathcal{T}_g/(\mathcal{T}_g)_{[c+1]}.
\]
The quotient \(\mathcal{Q}_{g,c}/\mathcal{N}_{g,c}\) is
\(\Sp_{2g}(\Z)\).
Johnson's abelianization theorem \cite{JohnsonIII} identifies
the torsion-free abelianization of \(\mathcal{T}_g\) with
\[
 A_g=\Lambda^3H_g/(H_g\wedge\Omega),
\]
where \(\Omega\) is the inverse symplectic form.
Its action extends to \(\Sp_{2g}(\R)\); the central element
\(-I\) acts as \(-1\), and a diagonal subgroup contracts a
spanning set. These facts verify the hypotheses of
\cref{thm:strategy} and give our first application.

\begin{maintheorem}\label{thm:main}
For every \(g\ge3\) and every integer \(c\ge1\), the group
\(\mathcal{Q}_{g,c}\) is finitely generated, infinite and has
Kazhdan's property~\(\rt\). Its normal subgroup \(\mathcal{N}_{g,c}\)
is finitely generated and nilpotent, has infinite abelianization,
and satisfies \(\mathcal{Q}_{g,c}/\mathcal{N}_{g,c}\cong\Sp_{2g}(\Z)\).
\end{maintheorem}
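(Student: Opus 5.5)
The plan is to apply \cref{thm:strategy} with \(G=\Mod(\Sigma_g)\), \(K=\mathcal{T}_g\) and \(\Gamma=\Sp_{2g}(\Z)\), and then read off the structural claims from classical facts.

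\textbf{Finiteness conditions.} \(\Mod(\Sigma_g)\) is finitely generated (Dehn--Lickorish). The symplectic representation is surjective, so \(\Gamma=G/K\cong\Sp_{2g}(\Z)\), and therefore \(\mathcal{Q}_{g,c}/\mathcal{N}_{g,c}\cong\Sp_{2g}(\Z)\). For \(g\ge3\), Johnson proved that \(\mathcal{T}_g\) is finitely generated. Hence \(\mathcal{N}_{g,c}\) is finitely generated, and it is nilpotent of class at most \(c\) by construction. Its abelianization is \(\mathcal{T}_g^{\ab}\), which surjects onto \(A_g\), a free abelian group of rank \(\binom{2g}{3}-2g>0\). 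So it is infinite. Since \(\mathcal{Q}_{g,c}\) surjects onto \(\Sp_{2g}(\Z)\), it is finitely generated and infinite.

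\textbf{Hypotheses of \cref{thm:strategy}.}
\begin{enumerate}[label=\textup{(\roman*)}]
 \item Take \(S=\Sp_{2g}(\R)\). It is simple of real rank \(g\ge3\ge2\), so it has property~\(\rt\). It is connected, hence unimodular, and it is second countable. By Borel--Harish-Chandra, \(\Sp_{2g}(\Z)\) is a lattice in \(S\).
 \item Johnson's theorem \cite{JohnsonIII} identifies \(V=A\otimes\R\) with \(\Lambda^3 V_H/(V_H\wedge\Omega)\), where \(V_H=H_g\otimes\R\). This identification is \(\Sp_{2g}(\Z)\)-equivariant. The algebraic action of \(\Sp_{2g}(\R)\) on \(\Lambda^3V_H\) preserves \(V_H\wedge\Omega\), since \(\Omega\) is \(\Sp\)-invariant, and so it descends to a continuous representation \(\rho\). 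The determinant \(\det\rho\) is a continuous homomorphism \(S\to\R^\times\). Because \(S\) is perfect (it is connected semisimple), this homomorphism is trivial, so \(\lvert\det\rho\rvert=1\).
 \item Choose a symplectic basis \(a_i,b_i\) and the diagonal element \(d_t\) with \(d_t a_i=e^{t}a_i\) and \(d_t b_i=e^{-t}b_i\). The images of wedges \(b_i\wedge b_j\wedge b_k\) and \(a_i\wedge b_j\wedge b_k\) are contracted as \(t\to\infty\). The images of \(a_i\wedge a_j\wedge a_k\) and \(a_i\wedge a_j\wedge b_k\) are contracted as \(t\to-\infty\). Together these four families of monomials span \(\Lambda^3V_H\), so their images span \(V\). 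No monomial has weight zero, because three is odd and the weights are \(\pm e^{\pm t}\) products with an odd number of factors.
 \item The central element \(-I\in\Sp_{2g}(\Z)\) acts on \(\Lambda^3\) by \((-1)^3=-1\), so it fixes no nonzero vector of \(V\).
\end{enumerate}

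With (i)--(iv) verified, \cref{thm:strategy} gives property~\(\rt\) for \(\mathcal{Q}_{g,c}\) for every \(c\ge1\).

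\textbf{Main obstacle.} The delicate step is the equivariance claim in (ii). The conjugation action of \(\Mod(\Sigma_g)\) on \(\mathcal{T}_g^{\ab}\otimes\R\) factors through \(\Sp_{2g}(\Z)\), and the Johnson homomorphism must intertwine it with the natural action on \(\Lambda^3H/H\). Two points need care here. First, the Johnson homomorphism is \(\Mod\)-equivariant, which is standard. Second, it induces an isomorphism after tensoring with \(\R\), which holds because the kernel of \(\mathcal{T}_g^{\ab}\to\Lambda^3H/H\) is \(2\)-torsion by \cite{JohnsonIII}. I would cite both facts directly rather than reprove them.
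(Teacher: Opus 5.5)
Your proposal is correct and is essentially the paper's argument: the introduction obtains \cref{thm:main} by checking the hypotheses of \cref{thm:strategy} with \(S=\Sp_{2g}(\R)\), Johnson's module and \(z=-I\), and \cref{sec:groups} unpacks this into the Hain--Matsumoto affine realization, \cref{prop:arithmeticT} and nilpotent permanence. Your perfectness argument for \(\lvert\det\rho\rvert=1\) is a shorter substitute for the paper's explicit determinant computation.

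Two small repairs are needed. First, ``connected, hence unimodular'' is false in general, as the \(ax+b\) group shows; unimodularity instead follows from the same perfectness you use in~(ii), since the modular function is a continuous homomorphism \(S\to\R_{>0}\). Second, finite generation of \(\mathcal{Q}_{g,c}\) should be deduced from its being a quotient of \(\Mod(\Sigma_g)\), not from its surjecting onto \(\Sp_{2g}(\Z)\).
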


The genus assumption is essential. For every \(c\ge1\), the
group \(\mathcal{Q}_{2,c}\) has a finite-index subgroup surjecting
onto \(\Z\), and hence does not have property~\(\rt\).
The first-stage affine realization for \(g\ge3\) already appears
in the work of Hain and Matsumoto \cite[Lemma~6.3]{HainMatsumoto}.
In \cref{sec:groups} we explain how it enters the argument.

We then consider the case of \(\Aut(F_3)\), the automorphism group of the free group on \(3\) generators.
For \(\Aut(F_3)\), the relevant normal subgroup is
\[
 \operatorname{IA}_3
 =\ker\bigl(\Aut(F_3)\longrightarrow\GL_3(\Z)\bigr).
\]
Denote \(H_3=H_1(F_3;\Z)\) and \(H_3^*=\Hom(H_3,\Z)\).
The first Johnson homomorphism (see \cite{Johnson1980} for its Torelli-group
version) identifies the abelianization of
\(\operatorname{IA}_3\) with
\(H_3^*\otimes\Lambda^2H_3\), see \cite{SatohSurvey} for details.
Here the arithmetic quotient is \(\GL_3(\Z)\), the ambient group
is
\(\{s\in\GL_3(\R):\lvert\det s\rvert=1\}\), and again
\(-I\) acts as \(-1\) on the abelianized kernel.
The contraction condition is verified by a diagonal subgroup.
With
\[
 \mathcal{P}_{3,c}
 =\Aut(F_3)/(\operatorname{IA}_3)_{[c+1]},
\]
we obtain the following application.

\begin{maintheorem}\label{thm:autquotients}
For every integer \(c\ge1\), the group
\(\mathcal{P}_{3,c}\) is finitely generated, infinite and has
property~\(\rt\). The normal subgroup
\(\operatorname{IA}_3/(\operatorname{IA}_3)_{[c+1]}\) is
finitely generated and nilpotent, and its abelianization is the
infinite free abelian group
\[
 H_3^*\otimes\bigwedge\nolimits^2 H_3.
\]
In particular, every quotient of \(\Aut(F_3)\) in which
\(\operatorname{IA}_3\) has nilpotent image has property~\(\rt\).
\end{maintheorem}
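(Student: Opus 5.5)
We will deduce the statement from \cref{thm:strategy} with \(G=\Aut(F_3)\), \(K=\operatorname{IA}_3\), \(\Gamma=\GL_3(\Z)\) and \(S=\{s\in\GL_3(\R):\lvert\det s\rvert=1\}\).

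\emph{Finiteness properties.} \(\Aut(F_3)\) is finitely generated, and by Magnus \(\operatorname{IA}_3\) is finitely generated. Hence every lower-central quotient \(\operatorname{IA}_3/(\operatorname{IA}_3)_{[c+1]}\) is a finitely generated nilpotent group. The abelianization of \(\operatorname{IA}_3\) is computed by the first Johnson homomorphism \(\tau_1\colon \operatorname{IA}_3^{\ab}\to H_3^*\otimes\Lambda^2H_3\). This map is surjective, and its kernel vanishes, since \(\operatorname{IA}_3^{\ab}\) is free abelian of rank \(n^2(n-1)/2=9\) on the Magnus generators (Formanek, Andreadakis, Cohen--Pakianathan, Farb, Kawazumi; see \cite{SatohSurvey}). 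So \(A=H_3^*\otimes\Lambda^2H_3\cong\Z^9\) is torsion-free and infinite. It is also the abelianization of \(\operatorname{IA}_3/(\operatorname{IA}_3)_{[c+1]}\), because \(c\ge1\). It follows that \(\mathcal P_{3,c}\) is finitely generated, and it is infinite since it surjects onto \(\GL_3(\Z)\).

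\emph{Hypotheses (i) and (ii).} The group \(S\) is \(\SL_3(\R)\rtimes\{\pm I\}\), or equivalently \(\SL_3(\R)\times\{\pm I\}\), since \(-I\) is central and has determinant \(-1\). It is second countable and unimodular, and it has property \(\rt\) because \(\SL_3(\R)\) does and \(\SL_3(\R)\) has finite index in \(S\). The subgroup \(\GL_3(\Z)\) is a lattice in \(S\), since \(\SL_3(\Z)\) is a lattice in \(\SL_3(\R)\). The conjugation action of \(\GL_3(\Z)\) on \(A\) is the natural one on \(H^*\otimes\Lambda^2H\), because \(\tau_1\) is \(\Aut(F_3)\)-equivariant. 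This action extends to the algebraic representation \(\rho(s)=(s^{-1})^{T}\otimes\Lambda^2 s\) of \(\GL_3(\R)\). Its determinant is a character of \(\GL_3\), hence a power of \(\det s\). Computing weights gives
\[
\det\rho(s)=(\det s)^{-3\cdot 1}\cdot(\det s)^{3\cdot 2}=(\det s)^{3}.
\]
Here \(\det(\Lambda^2s)=(\det s)^2\) and \(\dim\Lambda^2=3\). So \(\lvert\det\rho\rvert=1\) on \(S\).

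\emph{Hypotheses (iii) and (iv).} Take a weight basis \(e_i^*\otimes e_j\wedge e_k\) with \(j<k\). For \(d=\operatorname{diag}(t_1,t_2,t_3)\) with \(t_1t_2t_3=1\), this vector has weight \(t_jt_k/t_i\). If \(i\notin\{j,k\}\), this weight equals \(t_i^{-2}\). Choosing \(t_i=t\to\infty\) and the other two entries \(t^{-1/2}\) contracts the vector. If instead \(i=j\), the weight is \(t_k\), and choosing \(t_k\to0\) contracts it. Thus every basis vector is contracted by some sequence in \(S\), which gives (iii). The element \(-I\) is central in \(\GL_3(\Z)\) and acts on \(A\) by \((-1)\cdot(+1)=-1\), which gives (iv). Applying \cref{thm:strategy} yields property \(\rt\) for every \(\mathcal P_{3,c}\), and also for every quotient of \(\Aut(F_3)\) in which \(\operatorname{IA}_3\) has nilpotent image.

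The main obstacle is the \emph{exact} identification of \(\operatorname{IA}_3^{\ab}\) as a free abelian group via \(\tau_1\). Strictly, \cref{thm:strategy} only needs the torsion-free quotient, together with the fact that its rationalization is \(H^*\otimes\Lambda^2H\); surjectivity plus a rank comparison with the Magnus generating set would already suffice for that. So I would cite the known computation of \(\operatorname{IA}_n^{\ab}\) and keep the rank argument as a fallback.
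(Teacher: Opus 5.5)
Your proposal is correct and follows essentially the paper's route: the paper also verifies the hypotheses of the general criterion with $G=\Aut(F_3)$, $K=\operatorname{IA}_3$, $A=H_3^*\otimes\bigwedge^2H_3$, $\Gamma=\GL_3(\Z)$, $S=S_3$ and the central element $-I$. It uses the same determinant $(\det s)^3$, diagonal contraction, the central-element affine embedding and the nilpotent quotient tower; the only difference is that the paper contracts all basis vectors with a single one-parameter subgroup (exponents $(1,3,-4)$), whereas you use a separate diagonal sequence for each vector, which hypothesis (iii) permits. Your ``fallback'' in fact settles the integral identification outright: $\operatorname{IA}_3$ is generated by the nine Magnus automorphisms, whose images are up to sign a basis of $H_3^*\otimes\bigwedge^2H_3\cong\Z^9$, so the surjection from the $9$-generated abelianization onto $\Z^9$ must be an isomorphism, and there is no remaining obstacle.
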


\Cref{thm:autquotients} is stated for rank \(n=3\), although the same argument also applies for \(n\ge4\).
For \(n\ge4\), the
analogous property~\(\rt\) assertion already follows from
property~\(\rt\) of \(\Aut(F_n)\) and permanence under quotients.
Recall that \(\operatorname{Aut}(F_3)\) does not have
property~\(\rt\) \cite{McCool,GrunewaldLubotzky}, although every \(\mathcal{P}_{3,c}\) does.
In \cref{sec:freeautomorphisms} we discuss the relation with
Lubotzky and Pak's earlier results on tame automorphisms of free
nilpotent groups \cite{LubotzkyPak}, which also give the first
stage of this application.

An important feature of the argument in Theorem \ref{thm:strategy} is that the same method
applies to both families, \(\Mod(\Sigma_g)\) and \(\Aut(F_n)\). The homology modules and arithmetic
groups differ, but the passage from the action on first homology
to an affine lattice, and then to every nilpotent stage, is the
same. 

We finally consider linear representations of the mapping-class
quotients \(\mathcal{Q}_{g,c}\). The usual symplectic quotient is
linear, and the first-stage quotient, after removal of its finite
torsion kernel, has the affine realization used above. Starting
at \(c=2\), a different feature of the Torelli group survives.
Hain's comparison of unipotent and relative completions
\cite{Hain} gives a rational central line which remains nonzero
in the unipotent completions of these nilpotent quotients but is killed in the relative
completion of the mapping class group. Together with arithmetic
superrigidity, this gives a stronger obstruction than the
absence of a faithful representation.

\begin{maintheorem}\label{thm:nonlinearity}
Let \(g\ge3\) and \(c\ge2\). Every homomorphism
\[
 \mathcal{Q}_{g,c}\longrightarrow\GL_d(\C)
\]
has infinite kernel. In particular, \(\mathcal{Q}_{g,c}\) is not linear
over \(\C\).
\end{maintheorem}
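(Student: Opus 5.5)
Let \(\phi:\mathcal{Q}_{g,c}\to\GL_d(\C)\) be a homomorphism. The plan is to show that \(\phi\) kills a fixed infinite subgroup of \(\mathcal{N}_{g,c}\) lying in the last nontrivial term \((\mathcal{N}_{g,c})_{[2]}\). Since \(\mathcal{N}_{g,c}\) is finitely generated nilpotent, it suffices to exhibit an element \(\gamma\in(\mathcal{T}_g)_{[2]}/(\mathcal{T}_g)_{[3]}\) of infinite order, central in \(\mathcal{N}_{g,2}\), such that some power \(\gamma^m\) lies in \(\ker\phi\). For \(c>2\) I would first replace the element by a lift. A cleaner route is to show that \(\ker\phi\cap\mathcal{N}_{g,c}\) contains an element whose image in \(\mathcal{N}_{g,2}\) has infinite order. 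Then \(\ker\phi\) is infinite.

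\textbf{Reduction to a unipotent-versus-reductive comparison.} Let \(\mathbf G\) be the Zariski closure of \(\phi(\mathcal{Q}_{g,c})\). Because \(\mathcal{Q}_{g,c}\) has property~\(\rt\) by \cref{thm:main}, the image is finitely generated with finite abelianization on finite-index subgroups. I will pass to a finite-index subgroup \(\Gamma'\subseteq\Sp_{2g}(\Z)\) and its preimage \(Q'\), chosen so that \(\mathbf G\) is connected. The Zariski closure \(\mathbf U\) of \(\phi(\mathcal{N}_{g,c}\cap Q')\) is a nilpotent normal subgroup of \(\mathbf G\), and by Lie–Kolchin, after passing to finite index, it is solvable.

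By Margulis superrigidity for \(\Sp_{2g}(\Z)\) with \(g\ge3\), the induced representation of \(\Gamma'\) on \(\mathbf G/\Ru(\mathbf G)\) virtually extends to \(\Sp_{2g}(\C)\), up to a compact or finite factor. This means that \(\phi|_{Q'}\) factors, up to finite index, through the relative completion \(\mathcal{G}\) of \(\Mod(\Sigma_g)\) with respect to \(\Sp_{2g}\). Concretely, I would invoke the universal property of relative completion: any representation of \(Q'\) whose reductive quotient is governed by a representation of \(\Sp_{2g}\) extending the arithmetic one, and whose restriction to the kernel is unipotent, factors through \(\mathcal{G}(\C)\). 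To obtain unipotence on \(\mathcal{N}_{g,c}\), I use that finitely generated nilpotent groups have semisimple parts lying in a torus centralizing the image. Passing to a power \(\gamma^m\) lands in the unipotent radical, since the semisimple part of the image of a commutator in a nilpotent group has finite order on a finite-index subgroup. This step I expect to be delicate but standard.

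\textbf{Hain's central line.} Hain \cite{Hain} shows that the natural map from the Malcev (unipotent) completion of \(\mathcal{T}_g\) to the prounipotent radical of \(\mathcal{G}\) is surjective with kernel a central \(\Q\)-line \(\Q\cdot\zeta\) in degree two, for \(g\ge3\). Moreover \(\zeta\) is nonzero in the degree-two part of the graded Lie algebra, so it survives in the Malcev completion of \(\mathcal{N}_{g,c}\) for \(c\ge2\). I then pick \(\gamma\in(\mathcal{T}_g)_{[2]}\) whose Malcev image is a nonzero multiple of \(\log\zeta\) modulo degree \(\ge3\); after a power, its image in \(\mathcal{N}_{g,c}\) has infinite order. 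Since \(\phi\) restricted to a finite-index subgroup of \(\mathcal{N}_{g,c}\) factors through \(\mathcal{G}\), where \(\zeta\) dies, \(\phi(\gamma^{m})\) is trivial modulo degree \(\ge3\) terms. Those higher-degree terms must also be handled, by arranging that \(\gamma\) is exactly exponentiated along the kernel line. The kernel of the completion map restricted to \(\mathcal{N}_{g,c}\) is a subgroup commensurable with a lattice in a nontrivial unipotent group, so it contains infinitely many elements killed by \(\phi\).

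\textbf{Main obstacle.} The hardest step is justifying that \(\phi|_{Q'}\) factors through the relative completion. This requires controlling the non-unipotent part of \(\phi(\mathcal{N}_{g,c})\) and the possibly non-algebraic, compact parts coming from superrigidity. I would first handle these by composing with Galois conjugates and restricting to a finite-index subgroup, where the reductive image is exactly an algebraic representation of \(\Sp_{2g}\).
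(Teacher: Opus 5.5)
Your overall architecture is the paper's. Property~\(\rt\) gives FAb, superrigidity on a congruence subgroup feeds Hain's universal property of relative completion, Hain's rational central line is killed, and rationality of that line produces infinitely many kernel elements; your last step is essentially \cref{lem:rationalkernel}.

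But the step you flag as the main obstacle is a genuine gap, and your sketched remedy does not address it. Relative completion with respect to \(\sigma\) applies to a level subgroup \(\Mod(\Sigma_g)[m]\), which contains all of \(\mathcal T_g\), mapped into an extension of \(\Sp_{2g,\C}\) by a unipotent group. So the image of the \emph{whole} of \(\mathcal N_{g,c}\) must lie in a connected unipotent normal subgroup; unipotence of powers of a commutator \(\gamma\) is not enough. Two things stand in the way.
\begin{enumerate}[label=\textup{(\alph*)}]
\item The Zariski closure \(\mathbf K\) of \(\phi(\mathcal N_{g,c})\) can be disconnected. For instance, add to \(\phi\) a representation through a finite quotient of \(\mathcal Q_{g,1}\) in which the Torelli image is nontrivial; such quotients exist because \(E_g\) is linear over \(\Z\). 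Then no choice of \(\Gamma'\subseteq\Sp_{2g}(\Z)\) makes the closure of \(\phi(Q')\) connected. The finite-index subgroup \(\phi^{-1}(\mathbf G^\circ)\) generally does not contain \(\mathcal N_{g,c}\), so you are no longer working with a level subgroup.
\item \(\mathbf K^\circ=\mathbf T\times\mathbf U\) may a priori have a nontrivial torus. Elements of \(\mathcal N_{g,c}\) with non-torsion image in \(\mathbf T\) never become unipotent after taking powers.
\end{enumerate}
Compact factors and Galois conjugates are not where the difficulty lies: for \(\Sp_{2g}(\Z)\) one has virtual algebraic extension directly (\cref{thm:arithmeticextension}).

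The missing idea is how FAb is actually used, namely \cref{lem:fabclosure,lem:unipotentization}.
\begin{itemize}
\item FAb forces \(\mathbf H^\circ\) to be perfect, so it has no nontrivial central torus.
\item \(\mathbf T\) is preserved by conjugation by \(\mathbf H\), hence is a normal torus in \(\mathbf H^\circ\), hence central, hence trivial.
\item Then \(\mathbf K=\mathbf F\ltimes\mathbf U\) with \(\mathbf F\) finite. Nilpotence makes \(\Ad(f)\) unipotent on \(\Lie\mathbf U\), while finite order makes it semisimple, so \(\mathbf K=\mathbf F\times\mathbf U\).
\item \(\mathbf F\) is then the set of semisimple elements of \(\mathbf K\), hence normal in all of \(\mathbf H\).
\end{itemize}
Composing \(\phi\) with \(\mathbf H\to\mathbf H/\mathbf F\) enlarges the kernel by a factor at most \(\lvert\mathbf F\rvert\), so one kernel is infinite iff the other is. It also puts all of \(\mathcal N_{g,c}\) inside a connected unipotent normal subgroup of the identity component. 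Only after this does the map to the component group factor through \(\Sp_{2g}(\Z)\). Bass--Milnor--Serre then gives a level \(m_0\) whose preimage contains all of \(\mathcal T_g\), and the fiber-product extension of \(\Sp_{2g,\C}\) by \(\Ru(\mathbf H_1)\) becomes available for Hain's universal property. The order matters: quotient by \(\mathbf F\) first, then pass to identity components.
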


The proof uses superrigidity to place a hypothetical finite-kernel representation into a form to which the universal property of relative completion applies.
The central line must then be killed, and rational Malcev theory
produces an infinite subgroup of the discrete kernel.
A separate argument excludes faithful representations in
positive characteristic. Consequently \(\mathcal{Q}_{g,c}\)
is not linear over any field when \(g\ge3\) and \(c\ge2\), 
see \cref{cor:charzero,cor:allfields}.
These groups thus differ from the familiar arithmetic quotients
not only in their kernels but also in their linear representation
theory. It is not clear  if the groups 
\(\mathcal{P}_{3,c}\) are linear.

Hull's common-quotient theorem \cite[Corollary~1.6]{Hull} and
SQ-universality \cite[Theorem~8.1(a)]{DGO} also give, for
\(g\ge3\), a quotient of \(\Mod(\Sigma_g)\) with property~\(\rt\)
containing \((\Q,+)\). This quotient is finitely generated but
not residually finite, and hence is nonlinear over every field
by Malcev's theorem \cite{Nica}.
The advantage of the construction in Theorem \ref{thm:main} is its
canonically defined tower, with finitely generated nilpotent
kernels over \(\Sp_{2g}(\Z)\). It preserves the standard homological
quotient and identifies the obstruction to linearity within
the lower central structure of the Torelli group.

The paper is organized as follows. In \cref{sec:general} we
develop the general strategy. We apply it to mapping class
groups in \cref{sec:groups} and to automorphism groups of free
groups in \cref{sec:freeautomorphisms}. The proof of
\cref{thm:nonlinearity} is given in \cref{sec:nonlinearity}.
Further applications are discussed in \cref{sec:finalremarks}.

\tableofcontents

\section{A general strategy for nilpotent-kernel quotients}
\label{sec:general}
\label{sec:permanence}

In this section we develop a general argument for proving property~\(\rt\).
It applies to an arbitrary
countable group with a normal subgroup and consists of two parts: algebraic and analytic. 
The algebraic part reduces property \(\rt\) for
an entire family of nilpotent-kernel quotients to proving it for a single quotient,
obtained by abelianizing the kernel. The analytic part  then gives a
sufficient condition for this first quotient to have
property~\(\rt\): an affine realization over a lattice in a
locally compact group with property~\(\rt\), together with
contraction of the translation directions.
The hypotheses concerning lattices,
modules and contraction will be checked separately in the applications.

\subsection{Conventions and elementary permanence properties}

We use the commutator convention \([x,y]=xyx^{-1}y^{-1}\).
For subgroups, brackets denote the subgroup generated by the
corresponding commutators. The lower central series of a group
\(G\) is
\[
 G_{[1]}=G,\qquad G_{[j+1]}=[G,G_{[j]}]\quad(j\ge1).
\]
The group is \emph{nilpotent of class at most \(c\)} if
\(G_{[c+1]}=1\). The abelianization is \(G/[G,G]\).
If \(H\subseteq G\) is a normal subgroup, all its lower-central subgroups are
characteristic in \(H\), hence normal in \(G\).
For a finitely generated abelian group \(B\), write
\(\tor(B)\) for its finite torsion subgroup.

All locally compact groups below are Hausdorff. A unitary
representation \(\pi\) of such a group \(G\) on a complex
Hilbert space \(\HH\) is assumed to be strongly continuous: for each
\(\xi\in\HH\), the map \(g\mapsto\pi(g)\xi\) is norm
continuous. It has \emph{almost invariant unit vectors} if, for
every compact subset \(C\subset G\) and every \(\eps>0\),
there is \(\xi\in\HH\) with \(\|\xi\|=1\) and
\[
 \sup_{s\in C}\|\pi(s)\xi-\xi\|<\eps.
\]
For a subgroup \(H\subseteq G\), denote
\[
 \HH^H=\{\xi\in\HH:\pi(h)\xi=\xi\text{ for every }h\in H\}.
\]
The group has \emph{Kazhdan's property~\(\rt\)} if every unitary representation \(\pi\) of \(G\) with almost invariant vectors satisfies \(\HH^G\neq 0\).

The pair \((G,H)\) has \emph{relative property~\(\rt\)} if every unitary representation \(\pi\) of \(G\) with almost invariant vectors has \(\HH^H\ne0\).

We shall use the following standard permanence properties for
countable discrete groups. Property~\(\rt\) passes to quotient
groups \cite[Theorem~1.3.4]{BHV}. If \(H\subseteq G\) is a
subgroup of finite index, then \(G\) has property~\(\rt\) if
and only if \(H\) does \cite[Theorem~1.7.1]{BHV}. Property~\(\rt\)
is also invariant under passage through finite normal kernels:
if \(F\subseteq G\) is a finite normal subgroup, then \(G\)
has property~\(\rt\) if and only if \(G/F\) does
\cite[Propositions~1.1.5 and~1.7.6]{BHV}.

\begin{lemma}
\label{lem:normalextension}
Let \(N\subseteq G\) be a normal subgroup of a countable
discrete group. The following conditions are equivalent:
\begin{enumerate}[label=\textup{(\roman*)},beginpenalty=10000,midpenalty=10000]
 \item \(G\) has property~\(\rt\);
 \item \((G,N)\) has relative property~\(\rt\) and \(G/N\)
 has property~\(\rt\).
\end{enumerate}
\end{lemma}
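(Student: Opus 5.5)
The direction (i)\(\Rightarrow\)(ii) is immediate from the definitions and the quotient permanence recalled above. If \(G\) has property~\(\rt\), then every unitary representation \(\pi\) of \(G\) with almost invariant vectors has \(\HH^G\neq0\). Since \(\HH^G\subseteq\HH^N\), the pair \((G,N)\) has relative property~\(\rt\). Moreover, \(G/N\) is a quotient of \(G\), so it has property~\(\rt\) by \cite[Theorem~1.3.4]{BHV}.

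For (ii)\(\Rightarrow\)(i), I would argue by contradiction using the orthogonal complement of the \(N\)-invariant vectors. Let \(\pi\) be a unitary representation of \(G\) on \(\HH\) with almost invariant vectors. Because \(N\) is normal, \(\HH^N\) is \(G\)-invariant: if \(\xi\in\HH^N\), \(g\in G\) and \(n\in N\), then \(\pi(n)\pi(g)\xi=\pi(g)\pi(g^{-1}ng)\xi=\pi(g)\xi\). Hence \(\HH=\HH^N\oplus(\HH^N)^\perp\) is a decomposition into \(G\)-subrepresentations, with corresponding orthogonal projections \(P\) and \(1-P\) commuting with \(\pi(G)\).

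The key step is to show that almost invariant vectors pass to one of the summands, and that it cannot be the complement. Suppose, for contradiction, that \(\pi|_{(\HH^N)^\perp}\) has almost invariant vectors. Relative property~\(\rt\) applied to this subrepresentation gives a nonzero \(N\)-invariant vector in \((\HH^N)^\perp\), which is absurd. So there are a finite set \(C\subset G\) and \(\eps>0\) such that every \(\eta\in(\HH^N)^\perp\) satisfies
\[
\max_{s\in C}\|\pi(s)\eta-\eta\|\ge\eps\|\eta\|.
\]
Now take unit vectors \(\xi_k\) that are \(1/k\)-invariant under \(C\). Since \(1-P\) commutes with \(\pi\), we get \(\|\pi(s)(1-P)\xi_k-(1-P)\xi_k\|\le1/k\), and therefore \(\|(1-P)\xi_k\|\le 1/(k\eps)\to0\). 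Hence \(\|P\xi_k\|\to1\).

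Consequently the normalized vectors \(P\xi_k/\|P\xi_k\|\) are almost invariant for \(\pi|_{\HH^N}\), again using that \(P\) commutes with \(\pi\). The restriction of \(\pi\) to \(\HH^N\) factors through a unitary representation of \(G/N\), and almost invariance for compact, that is finite, subsets transfers directly between \(G\) and \(G/N\). Property~\(\rt\) of \(G/N\) then gives a nonzero \(G/N\)-invariant vector in \(\HH^N\), which is a nonzero \(G\)-invariant vector in \(\HH\).

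The only delicate point is the uniformity step in the contradiction argument: we need a single pair \((C,\eps)\) that works for the whole complement \((\HH^N)^\perp\), rather than for a single vector. The contradiction argument supplies exactly this, since "no almost invariant vectors" unpacks to the existence of such a pair. Countability is not really needed here; it only matches the paper's setting of discrete groups.
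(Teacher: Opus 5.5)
Your proof is correct and follows the paper's route. You split \(\HH=\HH^N\oplus(\HH^N)^\perp\), use relative property~\(\rt\) on the complement to force almost invariant vectors onto \(\HH^N\), and then apply property~\(\rt\) of \(G/N\). The only difference is that you extract a uniform pair \((C,\eps)\) for \((\HH^N)^\perp\) instead of using the paper's exhausting sequence \(C_n\) and subsequence argument, which gives an explicit bound and does remove the use of countability.

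One small fix is needed in the last step. There you should choose the \(\xi_k\) almost invariant under \(C\cup C'\) for an arbitrary finite \(C'\subset G\), not only under \(C\). Otherwise the normalized projections \(P\xi_k/\|P\xi_k\|\) are only almost invariant under \(C\), rather than under every finite set.
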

\begin{proof}
Suppose~(i) holds. The relative assertion in~(ii) follows
from the definition, and the assertion about \(G/N\) follows
from quotient permanence.

Conversely, assume~(ii), and let \(\pi\) have almost invariant unit vectors.
Choose finite subsets \(C_n\) increasing to \(G\), and unit
vectors \(\xi_n\) such that
\[
 \sup_{s\in C_n}\|\pi(s)\xi_n-\xi_n\|<1/n.
\]
In particular, for every fixed \(s\in G\),
\(\|\pi(s)\xi_n-\xi_n\|\to0\).
The orthogonal projection
\(P:\HH\to\HH^N\) commutes with every \(\pi(s)\).
We claim that \(\|(1-P)\xi_n\|\to0\).
Otherwise, some subsequence satisfies
\(\|(1-P)\xi_{n_j}\|\ge\delta>0\).
The  normalized vectors \( \eta_j=(1-P)\xi_{n_j}/\|(1-P)\xi_{n_j}\| \in(\HH^N)^\perp \)
are unit vectors, and we have
\[
 \begin{aligned}
 \|\pi(s)\eta_j-\eta_j\| \le\delta^{-1}\|\pi(s)\xi_{n_j}-\xi_{n_j}\|.
 \end{aligned}
\]
For every finite set of \(s\), the right-hand side tends
uniformly to zero. Hence the representation on
\((\HH^N)^\perp\) has almost invariant unit vectors.
Since it has no nonzero \(N\)-fixed vector, this contradicts
relative property~\(\rt\).

We have  \(\|P\xi_n\|^2=1-\|(1-P)\xi_n\|^2\to1\).
For all sufficiently large \(n\), the vectors
\(P\xi_n/\|P\xi_n\|\) are defined, and the same estimate,
with \(P\) in place of \(1-P\), shows that they are almost
invariant. The representation on \(\HH^N\) factors through
\(G/N\). Property~\(\rt\) of this quotient supplies a
guarantees the existence of a non-zero vector, which is fixed by all of \(G\).
\end{proof}

\subsection{Nilpotent normal subgroups and quotient towers}

The following result is the starting point for our investigation.
\begin{theorem}[Chatterji--Witte Morris--Shah]
\label{thm:cwms}
Let \(N\subseteq G\) be a nilpotent normal subgroup of a countable
discrete group. The following conditions are equivalent:
\begin{enumerate}[label=\textup{(\roman*)},beginpenalty=10000,midpenalty=10000]
 \item \((G,N)\) has relative property~\(\rt\);
 \item \(\bigl(G/[N,N],\,N/[N,N]\bigr)\) has relative
 property~\(\rt\).
\end{enumerate}
\end{theorem}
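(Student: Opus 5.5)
The implication (i)\(\Rightarrow\)(ii) is immediate. A unitary representation of \(G/[N,N]\) with almost invariant vectors pulls back to a representation of \(G\) with almost invariant vectors. Its \(N\)-fixed vectors are exactly the \(N/[N,N]\)-fixed vectors of the original representation. So the real content is (ii)\(\Rightarrow\)(i), and I would prove it by induction on the nilpotency class \(c\) of \(N\).

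The engine is the following \emph{central step}. Suppose \(Z\subseteq N\) is normal in \(G\) and central in \(N\), and \((G/Z,N/Z)\) has relative property~\(\rt\). Then \((G,N)\) has relative property~\(\rt\). I would take \(Z=N_{[c]}\), which is central in \(N\) and characteristic, hence normal in \(G\). The key steps are as follows.

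\begin{enumerate}[label=(\alph*)]
 \item Use induction to get the quotient hypothesis. The group \(N/Z\) has class \(c-1\), and \((N/Z)/[N/Z,N/Z]=N/[N,N]\) since \(Z\subseteq[N,N]\) for \(c\ge2\). So the induction hypothesis applied to \(G/Z\), together with (ii), gives relative property~\(\rt\) for \((G/Z,N/Z)\).
 \item Reduce to \(Z\)-fixed vectors. Let \(\pi\) have almost invariant vectors. By the projection argument from the proof of \cref{lem:normalextension}, it suffices to show that \(\HH^Z\) carries almost invariant vectors of \(G\); then relative \(\rt\) for \((G/Z,N/Z)\) gives a nonzero \(N\)-fixed vector. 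It is therefore enough to rule out the case \(\HH^Z=0\) with almost invariant vectors.
 \item Argue spectrally when \(\HH^Z=0\). Decompose \(\pi|_Z\) spectrally over \(\widehat Z\); the measure gives no mass to the trivial character. The group \(G\) permutes characters of \(Z\), and \(N\) fixes each character, since \(Z\) is central in \(N\).
 \item Use commutators to detect the characters. For \(x\in N_{[c-1]}\) and \(y\in N\), the map \(y\mapsto [x,y]\in Z\) is a homomorphism modulo nothing, because \(Z\) is central. Hence \(\pi([x,y])\) is a product of commutators of unitaries, and almost invariance of the vectors forces the spectral measure of \(Z\) on almost invariant vectors to concentrate near characters trivial on the finitely many relevant commutators \([x_i,y_i]\).
 \item Conclude. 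Combined with \(G\)-quasi-invariance of the spectral measure, and with relative \(\rt\) of the quotient pair applied to \(N/[N,N]\), which is abelian and detected through the bilinear pairing \(N/N_{[2]}\times N_{[c-1]}/N_{[c]}\to Z\), this should force mass at the trivial character, a contradiction.
\end{enumerate}

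Step (e) is the main obstacle. Controlling the spectral measure of a central, not merely abelian, subgroup and transferring the contradiction back requires a Burger-type argument with \(G\)-invariant means on \(\widehat Z\setminus\{1\}\). The hypothesis (ii) only controls the abelianization of \(N\), so one must show that every nontrivial character of \(Z\) is ``witnessed'' by the commutator pairing with \(N/[N,N]\) (and lower terms).

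If this direct route fails, I would fall back on citing the theorem as stated in \cite{CWMS}, where it is proved via this kind of central-extension analysis. Since the paper attributes the result to them, it is presumably quoted rather than reproved.
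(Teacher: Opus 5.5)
The paper gives no proof of this statement; it quotes \cite[Theorem~1.2]{CWMS}, so your fallback is exactly the paper's treatment. Your direction (i)$\Rightarrow$(ii) is correct, and so is the inductive scaffolding. Here $Z=N_{[c]}$ is central in $N$ and normal in $G$, induction gives relative property~$\rt$ for $(G/Z,N/Z)$, and the projection argument of \cref{lem:normalextension} reduces everything to relative property~$\rt$ for $(G,Z)$. But the central step as you formulate it is false. The triple $G=N=Z=\Z$ satisfies its hypotheses, since $(G/Z,N/Z)$ is trivial, while $(\Z,\Z)$ lacks relative property~$\rt$. The same happens for $(\Z\times\SL_3(\Z),\Z\times1)$ with $N=Z=\Z\times1$. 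The true statement, which is the form in which the central step appears in \cite{CWMS}, yields relative property~$\rt$ only for $(G,[N,N])$. What makes it usable in your induction is the inclusion $Z=N_{[c]}\subseteq[N,N]$ for $c\ge2$. Hypothesis~(ii) then enters only through the induction, not through any pairing inside the central step.

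The real gap is that this step is never proved, and the mechanism of (c)--(e) cannot prove it. Step (d) is vacuous. The restriction to $Z$ of a representation with almost invariant vectors always has almost invariant vectors, and near-invariance under finitely many commutators only puts spectral mass near the trivial character, which is compatible with no atom there. In (e), relative property~$\rt$ of a quotient pair cannot be applied to $\pi$, which does not factor through $G/Z$ or $G/[N,N]$. Nor can a Burger-type argument with means on $\widehat Z\setminus\{1\}$ work: when $Z$ is central in $G$, the action on $\widehat Z$ is trivial and every measure there is invariant. This happens both for the centre of $H_3(\Z)$ in $H_3(\Z)\rtimes\SL_2(\Z)$, where the pair has relative property~$\rt$, and in the $\Z\times\SL_3(\Z)$ example, where it does not. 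So the information must come from the fibres.

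The missing idea is to manufacture a representation of $G/Z$ out of $\pi$. Disintegrate $\HH=\int^\oplus_{\widehat Z}\HH_\chi\,d\mu(\chi)$. As $Z$ is central in $N$, the group $N$ acts fibrewise and $Z$ acts on $\HH_\chi$ by the scalar $\chi$. The field $\chi\mapsto\mathrm{HS}(\HH_\chi)$, with $G$ acting by conjugation twisted by the square root of the Radon--Nikodym cocycle, is a unitary representation on which $Z$ acts trivially. The map $\xi\mapsto(\xi_\chi\otimes\bar\xi_\chi/\|\xi_\chi\|)_\chi$ is $G$-equivariant, norm-preserving and Lipschitz, so it carries almost invariant vectors to almost invariant vectors. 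Relative property~$\rt$ for $(G/Z,N/Z)$ then puts their images close to $N$-invariant fields of operators. A self-adjoint $N$-invariant Hilbert--Schmidt operator close to a rank-one projection has a simple top eigenvalue, and its eigenline is spanned by an $N$-eigenvector, hence by a vector fixed by $[N,N]\supseteq Z$. Thus almost invariant vectors lie close to $\HH^{[N,N]}$, which is precisely what your proposal leaves as ``should force''.
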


This result is \cite[Theorem~1.2]{CWMS} in the special case of discrete groups. 

\begin{theorem}
\label{thm:nilpotentpermanence}
Let \(N\subseteq G\) be a nilpotent normal subgroup, with \(G\)
countable and discrete. The following conditions are equivalent:
\begin{enumerate}[label=\textup{(\roman*)},beginpenalty=10000,midpenalty=10000]
 \item \(G\) has property~\(\rt\);
 \item \(G/[N,N]\) has property~\(\rt\).
\end{enumerate}
\end{theorem}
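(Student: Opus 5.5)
The plan is to combine \cref{lem:normalextension} (applied twice) with \cref{thm:cwms}. Write \(\bar G=G/[N,N]\) and \(\bar N=N/[N,N]\), which is normal in \(\bar G\). We will use the canonical isomorphism
\[
 \bar G/\bar N\cong G/N ,
\]
and the fact that \(\bar G\) is again countable and discrete. Then \(\bar N\) is an abelian, hence nilpotent, normal subgroup of \(\bar G\), so \cref{lem:normalextension} applies to the pair \((\bar G,\bar N)\) as well as to \((G,N)\).

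For (i)\(\Rightarrow\)(ii) we simply use quotient permanence: \(G/[N,N]\) is a quotient of \(G\), so it inherits property~\(\rt\). No further input is needed in this direction.

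For (ii)\(\Rightarrow\)(i), assume \(\bar G\) has property~\(\rt\).
\begin{enumerate}[label=\textup{(\alph*)}]
 \item By \cref{lem:normalextension} applied to \(\bar N\subseteq\bar G\), direction (i)\(\Rightarrow\)(ii), the pair \((\bar G,\bar N)\) has relative property~\(\rt\). Directly from the definition, every representation with almost invariant vectors has nonzero \(\bar G\)-invariant vectors, and these are in particular \(\bar N\)-invariant. The same lemma also gives that \(\bar G/\bar N\cong G/N\) has property~\(\rt\).
 \item By \cref{thm:cwms}, direction (ii)\(\Rightarrow\)(i), relative property~\(\rt\) of \((\bar G,\bar N)\) lifts to relative property~\(\rt\) of \((G,N)\). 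This is where the nilpotency of \(N\) is used.
 \item Since \(G/N\) has property~\(\rt\) and \((G,N)\) has relative property~\(\rt\), \cref{lem:normalextension}, direction (ii)\(\Rightarrow\)(i), shows that \(G\) has property~\(\rt\).
\end{enumerate}

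The real depth lies in \cref{thm:cwms}, which we take as given, so there is no genuine obstacle here. The only point needing care is to check that the isomorphism \(\bar G/\bar N\cong G/N\) is the canonical one and compatible with the maps involved, so that property~\(\rt\) of \(G/N\) is exactly what \cref{lem:normalextension} needs in step~(c).
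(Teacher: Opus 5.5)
Your proposal is correct and follows the paper's proof essentially verbatim. You use quotient permanence for (i)\(\Rightarrow\)(ii); for the converse, you take relative property~\(\rt\) of \((G/[N,N],N/[N,N])\) together with property~\(\rt\) of \(G/N\) as a quotient of \(G/[N,N]\), then lift via \cref{thm:cwms} and conclude with \cref{lem:normalextension}.
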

\begin{proof}
The implication \(\textup{(i)}\Rightarrow\textup{(ii)}\) follows since property~\(\rt\) is preserved by quotients. 
Conversely, assume~(ii). Then \(G/[N,N]\) has relative property~\(\rt\)
with respect to every subgroup, in particular \(N/[N,N]\).
By \cref{thm:cwms}, \((G,N)\) has relative property~\(\rt\).
Also \(G/N\), being a quotient of \(G/[N,N]\), has
property~\(\rt\). The claim follows from \cref{lem:normalextension}.
\end{proof}

\begin{remark}
\label{rem:nilpotenceessential}
The hypothesis cannot be replaced by virtual nilpotence or
solvability. Indeed, let
\[
 G=N=D_\infty
 =\langle r,s\mid s^2=1,\ srs^{-1}=r^{-1}\rangle
 \cong\Z\rtimes C_2.
\]
Then \([N,N]=\langle r^2\rangle\) and
\(G/[N,N]\cong C_2\times C_2\) is finite, hence has
property~\(\rt\). However, \(\langle r\rangle\cong\Z\)
has index two in \(G\), so \(G\) does not have
property~\(\rt\).
This subgroup \(N\) is virtually cyclic, virtually nilpotent,
metabelian and polycyclic. Thus the same example excludes each
of these properties, as well as solvability. 
\end{remark}

\begin{remark}
\label{rem:nilpotentextensions}
The conclusion of \cref{thm:nilpotentpermanence} remains valid
if there is a normal subgroup \(K\subseteq G\), with \(K\subseteq N\), such that
\((G,K)\) has relative property~\(\rt\) and \(N/K\) is
nilpotent. To see the reverse implication, observe that
\[
 (G/K)/[N/K,N/K]\cong G/(K[N,N])
\]
is a quotient of \(G/[N,N]\). Applying \cref{thm:nilpotentpermanence}
to the nilpotent normal subgroup \(N/K\subseteq G/K\) gives
property~\(\rt\) for \(G/K\). Relative property~\(\rt\) for \(K\)
then gives property~\(\rt\) for \(G\).
It suffices, in particular, that \(K\) itself has
property~\(\rt\) and \(N/K\) is nilpotent.

Another special case is when \(N\) is \emph{finite-by-nilpotent}:
there is a finite normal subgroup \(F\subseteq N\) with
\(N/F\) nilpotent. If its nilpotency class is at most \(c\),
then \(K=N_{[c+1]}\subseteq F\) is finite and characteristic
in \(N\), hence normal in \(G\). Apply the preceding argument
to this \(K\). The initially chosen \(F\) need not be normal
in \(G\). The order of the words in ``finite-by-nilpotent''
matters: the opposite order is ruled out by the dihedral example.
\end{remark}

The next statement shows that property~\(\rt\) for the first quotient determines it for the whole tower of quotients.
\begin{theorem}
\label{thm:quotienttower}
Let \(K\subseteq G\) be a normal subgroup, where \(G\) is countable
and discrete. For every \(c\ge1\), the following conditions are equivalent:
\begin{enumerate}[label=\textup{(\roman*)},beginpenalty=10000,midpenalty=10000]
 \item \(G/K_{[c+1]}\) has property~\(\rt\);
 \item \(G/[K,K]\) has property~\(\rt\).
\end{enumerate}
If these conditions hold, every quotient
of \(G\) in which the image of \(K\) is nilpotent has
property~\(\rt\). The same conclusion holds if that image
is finite-by-nilpotent.
\end{theorem}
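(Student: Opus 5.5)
The plan is to apply \cref{thm:nilpotentpermanence} to the nilpotent normal subgroup \(N=K/K_{[c+1]}\) of \(G'=G/K_{[c+1]}\). The core of the argument is identifying the quotient \(G'/[N,N]\) with \(G/[K,K]\).

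First, (i)\(\Rightarrow\)(ii). Since \(c\ge1\), we have \(K_{[c+1]}\subseteq K_{[2]}=[K,K]\). Hence \(G/[K,K]\) is a quotient of \(G/K_{[c+1]}\), and quotient permanence gives the claim.

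For (ii)\(\Rightarrow\)(i), put \(G'=G/K_{[c+1]}\) and \(N=K/K_{[c+1]}\). Both \(K_{[c+1]}\) and \([K,K]\) are characteristic in \(K\), hence normal in \(G\), so \(N\) is a normal subgroup of \(G'\). Images of lower central terms under a surjection are the lower central terms of the image, so \(N_{[j]}=K_{[j]}K_{[c+1]}/K_{[c+1]}\). In particular \(N_{[c+1]}=1\), so \(N\) is nilpotent. Moreover
\[
 [N,N]=[K,K]/K_{[c+1]},\qquad G'/[N,N]\cong G/[K,K].
\]
Thus \(G'/[N,N]\) has property~\(\rt\), and \cref{thm:nilpotentpermanence} gives property~\(\rt\) for \(G'\). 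The group \(G'\) is countable as a quotient of \(G\).

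For the final assertions, let \(\pi:G\to Q\) be a surjection with \(\pi(K)\) nilpotent, say of class at most \(c\). Then \(\pi(K_{[c+1]})=\pi(K)_{[c+1]}=1\), so \(Q\) is a quotient of \(G/K_{[c+1]}\), which has property~\(\rt\) by the equivalence just proved. Now suppose \(\pi(K)\) is only finite-by-nilpotent: there is a finite \(F\) with \(\pi(K)/F\) of class at most \(c\). Then \(\pi(K)_{[c+1]}=\pi(K_{[c+1]})\) lies in \(F\), so it is finite, and it is normal in \(Q\). Dividing by it, \(Q/\pi(K_{[c+1]})\) is a quotient of \(G/K_{[c+1]}\), so it has property~\(\rt\). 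Invariance under finite normal kernels then transfers property~\(\rt\) back to \(Q\). Alternatively, one can invoke \cref{rem:nilpotentextensions} directly.

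The only step needing care is checking that the images of lower central terms behave correctly under quotients, namely \(\pi(K_{[j]})=\pi(K)_{[j]}\). This follows by a routine induction from \(\pi([A,B])=[\pi(A),\pi(B)]\). Otherwise the statement is a direct bookkeeping consequence of \cref{thm:nilpotentpermanence}, and I expect no substantial obstacle.
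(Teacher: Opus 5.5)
Your proof is correct and follows the paper's argument. The equivalence is exactly \cref{thm:nilpotentpermanence} applied to \(K/K_{[c+1]}\subseteq G/K_{[c+1]}\), once its commutator quotient is identified with \(G/[K,K]\). For the final assertions there is a minor, equally valid variation: you factor the quotient through \(G/K_{[c+1]}\), using finite-kernel permanence in the finite-by-nilpotent case, whereas the paper notes that \(E/[N,N]\) is a quotient of \(G/[K,K]\) and reapplies \cref{thm:nilpotentpermanence} or \cref{rem:nilpotentextensions} to \(E\).
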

\begin{proof}
The subgroup \(K/K_{[c+1]}\) is nilpotent and normal in
\(G/K_{[c+1]}\), and
\[
 \begin{aligned}
 [K/K_{[c+1]},K/K_{[c+1]}]&=[K,K]/K_{[c+1]},\\
 (G/K_{[c+1]})/\bigl([K,K]/K_{[c+1]}\bigr)&\cong G/[K,K].
 \end{aligned}
\]
The equivalence is therefore \cref{thm:nilpotentpermanence}.

More generally, let \(p:G\rightarrow E\) be a surjective homomorphism, and write
\(N=p(K)\). There is a surjection
\[
 G/[K,K]\rightarrow E/[N,N],
\]
because \(p([K,K])=[N,N]\). Thus \(E/[N,N]\) has
property~\(\rt\). If \(N\) is nilpotent, apply
\cref{thm:nilpotentpermanence}; if it is finite-by-nilpotent,
apply \cref{rem:nilpotentextensions}.
\end{proof}

Finite generation is not needed for this theorem. When \(G\)
is finitely generated its quotients are clearly finitely
generated as well. No  assumption about finite generation of \(K\)
is implicit here.

\subsection{Contracting affine groups}
\label{sec:generalaffine}

We will now describe a contraction argument, that will be crucial for proving property \(\rt\).
Let \(V\) be a finite-dimensional real vector space and
\(\rho:S\to\GL(V)\) a continuous representation of a
locally compact group. The \emph{semidirect product}
\(V\rtimes_\rho S\) has underlying space \(V\times S\) and group operation defined by
\[
 (v,s)(w,t)=(v+\rho(s)w,st),\qquad
 (v,s)^{-1}=(-\rho(s^{-1})v,s^{-1}).
\]
We omit the subscript when the action is understood. The vector
subgroup \(V\times\{1\}\) consists of translations, and
\(\{0\}\times S\) is identified with \(S\).

\begin{lemma}
\label{lem:contraction}
Given a continuous unitary representation \(\pi\) of
\(V\rtimes S\), suppose that \(\xi\) is fixed by \(S\).
If \(v\in V\) and some sequence \(s_n\in S\) satisfies
\(\rho(s_n)v\to0\), then \(\xi\) is fixed by the
translation \((v,1)\).
\end{lemma}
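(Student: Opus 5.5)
The proof is the classical Mautner-type conjugation argument. The key observation is that conjugating a translation by an element of \(S\) rotates the translation vector through \(\rho\). In \(V\rtimes S\) we have
\[
 (0,s)(v,1)(0,s)^{-1}=(\rho(s)v,1),
\]
which follows directly from the multiplication rule \((v,s)(w,t)=(v+\rho(s)w,st)\). So we exploit the \(S\)-invariance of \(\xi\) to transport the translation \((v,1)\) to \((\rho(s_n)v,1)\), which is close to the identity.

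The steps are as follows. Since \(\pi(0,s)\xi=\xi\) for every \(s\in S\), and \(\pi(0,s)\) is unitary, we get for each \(n\)
\[
 \|\pi(v,1)\xi-\xi\|
 =\|\pi(0,s_n)\pi(v,1)\xi-\pi(0,s_n)\xi\|
 =\|\pi(0,s_n)\pi(v,1)\pi(0,s_n)^{-1}\xi-\xi\|,
\]
where in the last step we used \(\pi(0,s_n)^{-1}\xi=\xi\). The conjugation identity turns the right-hand side into \(\|\pi(\rho(s_n)v,1)\xi-\xi\|\). Now \((\rho(s_n)v,1)\to(0,1)\) in \(V\rtimes S\), because the topology on \(V\rtimes S\) is the product topology on \(V\times S\). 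Strong continuity of \(\pi\) then forces \(\pi(\rho(s_n)v,1)\xi\to\xi\). The left-hand side does not depend on \(n\), so it equals \(0\), and hence \(\pi(v,1)\xi=\xi\).

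There is no real obstacle here. The only points needing care are checking the conjugation formula with the paper's convention \((v,s)^{-1}=(-\rho(s^{-1})v,s^{-1})\), and recalling that strong continuity is exactly what is needed to pass to the limit. This is a computation rather than a genuine difficulty. Note also that no assumption on the \(s_n\) staying in a compact set is needed; they may diverge in \(S\).
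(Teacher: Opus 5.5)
Your proof is correct and is essentially the paper's argument: conjugating \((v,1)\) by \((0,s_n)\) gives \((\rho(s_n)v,1)\). Then \(S\)-invariance of \(\xi\) and unitarity give \(\|\pi(v,1)\xi-\xi\|=\|\pi(\rho(s_n)v,1)\xi-\xi\|\), and strong continuity together with the \(n\)-independence of the left side forces it to vanish.
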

\begin{proof}
Conjugation by \((0,s_n)\) sends \((v,1)\) to
\((\rho(s_n)v,1)\). Since \(\xi\) is fixed by
\((0,s_n)\) and its inverse, unitarity gives
\[
 \|\pi(v,1)\xi-\xi\|
 =\|\pi(\rho(s_n)v,1)\xi-\xi\|\longrightarrow0.
\]
The convergence is strong continuity at the identity of the
translation subgroup. The expression on the left is independent
of \(n\), and is therefore zero.
\end{proof}

The above lemma is an elementary form of the Mautner phenomenon,
see \cite[Lemma~1.4.8]{BHV}.

\begin{proposition}
\label{prop:affineT}
Suppose \(S\) has property~\(\rt\). If \(V\) is spanned
by vectors each of which can be contracted to zero, in the sense of Lemma \ref{lem:contraction}, by a sequence
in \(S\), then \(V\rtimes S\) has property~\(\rt\).
\end{proposition}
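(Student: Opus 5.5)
The plan is to check the definition of property~\(\rt\) directly for \(G'=V\rtimes S\). Let \(\pi\) be a unitary representation of \(G'\) on \(\HH\) with almost invariant unit vectors. The aim is a nonzero vector fixed by all of \(G'\), and the natural candidate is a vector fixed by \(S\).

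\textbf{Step 1 (restrict to \(S\)).} Every compact subset \(C\subset S\) is compact in \(G'\), since \(s\mapsto(0,s)\) is a continuous embedding. Hence \(\pi|_S\) also has almost invariant unit vectors. Because \(S\) has property~\(\rt\), the subspace \(\HH^S\) is nonzero. Choose \(\xi\in\HH^S\) with \(\xi\ne0\).

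\textbf{Step 2 (invariance under the translations).} Take any vector \(v\) in the given spanning family, with a sequence \(s_n\) such that \(\rho(s_n)v\to0\). By \cref{lem:contraction}, applied to \(\xi\), the translation \((v,1)\) fixes \(\xi\). Since \(w\mapsto(w,1)\) is a homomorphism from \(V\), the stabilizer \(\{w\in V:\pi(w,1)\xi=\xi\}\) is a subgroup of \(V\). By strong continuity it is also closed. It contains \(v\), so it contains \(\Z v\).

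This alone does not yet give all of \(V\), because \(\Z\)-combinations of the spanning vectors need not exhaust \(V\). The fix is to apply the lemma to real multiples as well: if \(\rho(s_n)v\to0\), then \(\rho(s_n)(tv)=t\rho(s_n)v\to0\) for every \(t\in\R\). So every line \(\R v\) spanned by a contractible vector lies in the stabilizer.

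\textbf{Step 3 (conclusion).} The stabilizer is a subgroup of \(V\) containing the lines \(\R v\) for a spanning family of vectors \(v\). It therefore contains their sum, which is all of \(V\). So \(\xi\) is fixed by \(V\times\{1\}\) and by \(\{0\}\times S\). Since \((v,s)=(v,1)(0,s)\), these two subgroups generate \(G'\), and \(\xi\) is a nonzero \(G'\)-invariant vector.

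I do not expect a serious obstacle. The only point needing care is the passage from the given spanning vectors to all of \(V\), which is handled by the scaling remark in Step 2. One should also note that \(G'\) is a locally compact group with the product topology, so that \(\pi\) restricted to each factor is strongly continuous.
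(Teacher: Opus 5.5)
Your proposal is correct and follows the paper's proof essentially step by step: restrict to \(S\) to obtain a nonzero \(S\)-fixed vector, apply \cref{lem:contraction} to every real multiple of each contractible spanning vector, and use that translations add to get invariance under all of \(V\). The remark that the stabilizer is closed is true but not needed, since the real lines already span \(V\) algebraically.
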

\begin{proof}
Let a unitary representation \(\pi\) have almost invariant unit vectors. Its restriction
to \(S\) has almost invariant vectors, because a compact subset
of \(S\) is compact in \(V\rtimes S\). Property~\(\rt\)
of \(S\) gives a nonzero vector \(\xi\) fixed by \(S\).
By \cref{lem:contraction}, it is fixed by every contractible
translation. A sequence contracting \(v\) also contracts every
real multiple of \(v\). Every vector in \(V\) is a finite
sum of these multiples, and translations add under multiplication.
Thus \(\xi\) is fixed by all of \(V\), and hence by
\(V\rtimes S\).
\end{proof}

We next explain the passage to discrete groups. 
A \emph{left Haar measure} on a
locally compact group is a nonzero left-translation-invariant
Radon measure; it exists and is unique up to a positive scalar.
A group is \emph{unimodular} if its left Haar measure is also
right invariant. A discrete subgroup \(\Gamma\) of a locally
compact group \(S\) is a \emph{lattice} if \(S/\Gamma\)
admits a nonzero finite \(S\)-invariant Radon measure.
Here \(S/\Gamma\) denotes cosets \(s\Gamma\), obtained
by the right multiplication action of \(\Gamma\) on \(S\).
For a second-countable unimodular group, the lattice condition
can equivalently be expressed by a Borel fundamental domain
of positive finite Haar measure, see
\cite[Sections~A.3 and~B.1--B.2]{BHV}.

A \emph{full lattice} in \(V\) is a subgroup
\(A=\Z b_1+\cdots+\Z b_d\), where \(b_1,\ldots,b_d\)
is a real basis of \(V\). Its quotient \(V/A\) is a compact
torus. We allow \(d=0\), in which case both \(A\) and
\(V\) are zero and their fundamental domain is a point of
measure one.
The next statement gives sufficient conditions for a discrete subgroup of the affine group to be a lattice.
\begin{proposition}
\label{prop:affinelattice}
Let \(S\) be a second-countable unimodular locally compact
group, \(V\) a finite-dimensional real vector space, and
\(\rho:S\to\GL(V)\) a continuous representation preserving
Lebesgue measure, that is,
\[
 |\det\rho(s)|=1\qquad(s\in S).
\]
Let \(p:V\rtimes S\to S\) be the coordinate projection,
and identify \(V\) with the translation subgroup
\(V\times\{1\}\). Suppose that a subgroup
\(\Delta\subseteq V\rtimes S\) satisfies the following:
\begin{enumerate}[label=\textup{(\roman*)},beginpenalty=10000,midpenalty=10000]
 \item \(\Gamma=p(\Delta)\) is a lattice in \(S\);
 \item \(A=\Delta\cap V\) is a full lattice in \(V\).
\end{enumerate}
Then \(\Delta\) is a lattice in \(V\rtimes S\).
In particular, \(A\rtimes\Gamma\) is a
lattice whenever \(\Gamma\) is a lattice in \(S\) preserving
a full lattice \(A\) in \(V\).

If, moreover, \(S\) has property~\(\rt\) and \(V\) is
spanned by vectors each of which can be contracted to zero
by a sequence in \(S\), then both \(V\rtimes S\) and
\(\Delta\) have property~\(\rt\).
\end{proposition}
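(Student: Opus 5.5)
The plan is to build an explicit Borel fundamental domain for \(\Delta\) in \(G=V\rtimes S\) and show that its Haar measure is finite and positive. First I will note that \(G\) is second countable and unimodular, with Haar measure \(dv\,ds\): Lebesgue measure \(dv\) on \(V\) times Haar measure \(ds\) on \(S\). Left invariance follows from the group law, because \((v,s)\mapsto(u+\rho(t)v,ts)\) changes \(dv\) by \(|\det\rho(t)|=1\) and \(ds\) is left invariant. Right invariance holds because \((v,s)\mapsto(v+\rho(s)w,st)\) is, for each fixed \(s\), a translation in \(v\), and \(ds\) is right invariant since \(S\) is unimodular. The hypothesis \(|\det\rho|=1\) is used exactly in the left-invariance step.

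Next I will check that \(\Delta\) is discrete. Take a neighbourhood \(U\) of \(1\) in \(S\) with \(U\cap\Gamma=\{1\}\) and a ball \(B\) around \(0\) in \(V\) with \(B\cap A=\{0\}\). If \((v,\gamma)\in\Delta\cap(B\times U)\), then \(\gamma=1\), so \(v\in A\cap B=\{0\}\).

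For the fundamental domain, choose a Borel fundamental domain \(F_S\subseteq S\) for the right action of \(\Gamma\) with \(0<\mu(F_S)<\infty\); this exists by (i) and the cited facts from \cite{BHV}. Let \(P\subseteq V\) be the half-open parallelepiped spanned by the basis \(b_1,\dots,b_d\) of \(A\). I claim that \(F=\{(v,s):s\in F_S,\ v\in \rho(s)P\}\) is a fundamental domain for right multiplication by \(\Delta\). Here \(\rho(s)P\) is a fundamental domain for the lattice \(\rho(s)A\). Take \((u,t)\in G\). There is a unique \(\gamma\in\Gamma\) with \(t\gamma^{-1}\in F_S\). Pick any \(\delta_0=(a_0,\gamma)\in\Delta\) lying over \(\gamma\), and write \((u,t)\delta_0^{-1}=(u',s)\) with \(s\in F_S\).

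The remaining freedom is right multiplication by elements of \(A\) together with changes of the lift, and all lifts of \(\gamma\) differ by elements of \(A\). Right multiplication by \((a,1)\) sends \((u',s)\) to \((u'+\rho(s)a,s)\). Hence there is exactly one \(a\in A\) with \(u'+\rho(s)a\in\rho(s)P\), which gives existence and uniqueness of the representative. To see that \(F\) is Borel, note it is the preimage of \(F_S\times P\) under the homeomorphism \((v,s)\mapsto(\rho(s)^{-1}v,s)\). By Fubini, \(\operatorname{vol}(F)=\int_{F_S}\operatorname{vol}(\rho(s)P)\,ds=\operatorname{vol}(P)\,\mu(F_S)\), which is finite and positive, again using \(|\det\rho|=1\). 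So \(\Delta\) is a lattice.

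The special case is \(\Delta=A\rtimes\Gamma\): here \(p(\Delta)=\Gamma\) and \(\Delta\cap V=A\), and \(\Delta\) is a subgroup because \(\Gamma\) preserves \(A\). For the final assertion, \cref{prop:affineT} gives property~\(\rt\) for \(V\rtimes S\). Property~\(\rt\) then passes to lattices by the standard result \cite[Theorem~1.7.1]{BHV}, so \(\Delta\) has property~\(\rt\) as well. I expect the main obstacle to be the bookkeeping in the fundamental-domain step: the order of the multiplications and how \(\rho(s)\) twists the fibre lattice. I will handle it by working fibrewise over \(F_S\) as described.
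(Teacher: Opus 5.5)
Your proposal is correct and follows essentially the same route as the paper. Both use the product measure \(\lambda\times\nu\) as a two-sided Haar measure, with \(|\det\rho|=1\) giving left invariance and unimodularity of \(S\) giving right invariance; the same product-neighbourhood discreteness argument; the same fibred fundamental domain \(\{(\rho(s)u,s): s\in\mathcal D,\ u\in P\}\), whose volume is computed by Fubini; and then \cref{prop:affineT} together with inheritance of property~\(\rt\) by lattices. Your explicit check that this domain is Borel, via the homeomorphism \((v,s)\mapsto(\rho(s)^{-1}v,s)\), is a detail the paper leaves implicit.
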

\begin{proof}
Write \(L=V\rtimes S\). Discreteness of \(\Gamma\) and \(A\)
implies that a sufficiently small product neighborhood of the
identity meets \(\Delta\) only at the identity. Hence
\(\Delta\) is a closed discrete subgroup.
Conjugation in \(\Delta\) also gives \(\rho(\Gamma)A=A\).

Let \(\nu\) be Haar measure on \(S\). Choose a half-open
fundamental parallelepiped \(P\) for \(A\), and normalize
Lebesgue measure \(\lambda\) by \(\lambda(P)=1\).
The determinant assumption and unimodularity of \(S\)
make \(\mu=\lambda\times\nu\) a left and right Haar measure
on \(L\).

Choose a Borel fundamental domain \(\mathcal D\) for
\(S/\Gamma\), with \(0<\nu(\mathcal D)<\infty\), and put
\[
 \mathcal F=\{(\rho(s)u,s):s\in\mathcal D,\ u\in P\}.
\]
This is a Borel fundamental domain for \(L/\Delta\).
Indeed, choose a lift \((b_\gamma,\gamma)\in\Delta\) for
each \(\gamma\in\Gamma\). Given \((v,t)\in L\), write
uniquely \(t=s\gamma\), with \(s\in\mathcal D\), and
\(\rho(s)^{-1}v-b_\gamma=u+a\), with \(u\in P\), \(a\in A\).
Then
\[
 (v,t)=(\rho(s)u,s)(a+b_\gamma,\gamma),
 \qquad (a+b_\gamma,\gamma)\in\Delta.
\]
These unique decompositions give exactly one representative
in \(\mathcal F\) for each coset, without requiring a
homomorphic choice of lifts.

Volume preservation now gives
\[
 0<\mu(\mathcal F)
 =\int_{\mathcal D}\lambda(\rho(s)P)\,d\nu(s)
 =\nu(\mathcal D)<\infty.
\]
Thus \(\Delta\) is a lattice by
\cite[Proposition~B.2.4]{BHV}. Equivalently, restricting
\(\mu\) to \(\mathcal F\) and passing to \(L/\Delta\)
gives its finite invariant quotient measure, see
\cite[Corollary~B.1.7 and Proposition~B.2.4]{BHV}.

Finally, under the additional hypotheses, \(L\) has
property~\(\rt\) by \cref{prop:affineT}, and so does its
lattice \(\Delta\) by \cite[Theorem~1.7.1]{BHV}.
\end{proof}

\subsection{The main criterion for property \(\rt\)}

Consider an exact sequence
\begin{equation}\label{eq:extensionaction}
 1\longrightarrow A\longrightarrow E
 \xrightarrow{p}\Gamma\longrightarrow1
\end{equation}
with \(A\) free abelian of finite rank. We will use additive notation for the group operation in \(A\). 
Conjugation on \(A\)
factors through \(\Gamma\), since conjugation by \(A\)
is trivial on \(A\), inducing a map
\[ \rho: \Gamma \to \operatorname{Aut}(A). \]
A map \(b:E\to V\),
where \(V=A\otimes\R\), is a \emph{crossed homomorphism}
if
\[
 b(xy)=b(x)+\rho(p(x))b(y).
\]
This identity says exactly that
\(x\mapsto(b(x),p(x))\) is a homomorphism into
\(V\rtimes\Gamma\). If \(b|_A\) is the inclusion of
\(A\) in \(V\), that homomorphism is injective: an element
of its kernel lies in \(A\), where its first coordinate is
the element itself.

The following elementary construction often provides such a map.
Its usefulness lies in the fact that an actual splitting of \(E\) is not required.

\begin{proposition}\label{prop:centralaffine}
Consider the extension \eqref{eq:extensionaction} and suppose there is \(z\in Z(\Gamma)\)
such that
\[
 D=I-\rho(z):V\longrightarrow V
\]
is invertible. Then there is an injective homomorphism
\[
 \phi:E\longrightarrow A\rtimes\Gamma
\]
such that
\[
 \operatorname{pr}_{\Gamma}\circ\phi=p,
 \qquad \phi(a)=(Da,1)\quad(a\in A),
\]
where \(\operatorname{pr}_{\Gamma}:A\rtimes\Gamma\to\Gamma\)
is the coordinate projection \((a,\gamma)\mapsto\gamma\).
The image of \(\phi\) has index
\[
 [A:D(A)]=|\det D|.
\]
There is also an injective homomorphism into
\(V\rtimes\Gamma\) which restricts to the identity on \(A\).
In particular, if \(z\) acts as \(-1\) on \(A\), the first
index is \(2^{\operatorname{rank}A}\).
\end{proposition}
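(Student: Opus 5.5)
The plan is to build the crossed homomorphism directly from a lift of the central element \(z\), using commutators with that lift. No section of \(p\) is needed. Fix any \(\hat z\in E\) with \(p(\hat z)=z\), and define
\[
 b(x)=x\hat z x^{-1}\hat z^{-1}\qquad(x\in E).
\]
Since \(z\) is central in \(\Gamma\), we have \(p(b(x))=zz^{-1}=1\), so \(b\) takes values in \(A\).

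First I will check that \(b\) is a crossed homomorphism with respect to \(\rho\circ p\). Inserting \(\hat z^{-1}\hat z\) gives
\[
 b(xy)=xy\hat z y^{-1}x^{-1}\hat z^{-1}
 =x\bigl(y\hat z y^{-1}\hat z^{-1}\bigr)x^{-1}\cdot x\hat z x^{-1}\hat z^{-1}.
\]
In additive notation this is \(\rho(p(x))b(y)+b(x)\). For \(a\in A\), the same formula gives \(b(a)=a-\rho(z)a=Da\).

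Put \(\phi(x)=(b(x),p(x))\). By the remark before the proposition, \(\phi\) is a homomorphism into \(A\rtimes\Gamma\) with \(\operatorname{pr}_\Gamma\circ\phi=p\) and \(\phi(a)=(Da,1)\). Its kernel lies in \(\ker p=A\), where \(\phi\) is \(a\mapsto Da\). This map is injective because \(D\) is invertible on \(V\supseteq A\), so \(\phi\) is injective.

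For the index, the image of \(\phi\) surjects onto \(\Gamma\). Its intersection with \(A\times\{1\}\) is exactly \(\phi(A)=D(A)\times\{1\}\). Hence \([A\rtimes\Gamma:\phi(E)]=[A:D(A)]\). This equals \(|\det D|\) by the standard fact for an integral matrix with nonzero determinant, e.g.\ via Smith normal form.

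For the real version, set \(b'=D^{-1}\circ b:E\to V\). The key point is that \(D\) commutes with every \(\rho(\gamma)\): centrality of \(z\) gives \(\rho(\gamma)\rho(z)=\rho(z)\rho(\gamma)\). Hence \(D^{-1}\) is \(\Gamma\)-equivariant and \(b'\) is again a crossed homomorphism. It satisfies \(b'(a)=a\) on \(A\), so \(x\mapsto(b'(x),p(x))\) is injective into \(V\rtimes\Gamma\) and restricts to the identity on \(A\).

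Finally, if \(z\) acts as \(-1\), then \(D=2I\) and \(|\det D|=2^{\operatorname{rank}A}\). The only delicate step is keeping the commutator convention and the order of factors consistent, so that \(b|_A=D\) rather than \(-D\). With the choice \(b(x)=x\hat z x^{-1}\hat z^{-1}\) the signs come out correctly.
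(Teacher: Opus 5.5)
Your proposal is correct and follows the paper's proof essentially step by step. Both use the commutator with a lift \(\hat z\) of \(z\) as the crossed homomorphism, get \(b(a)=Da\) on \(A\) (which gives injectivity), compute the index from surjectivity onto \(\Gamma\) together with \(\phi(E)\cap (A\times\{1\})=D(A)\times\{1\}\), and pass to \(D^{-1}b\) for the real embedding, using that \(D\) commutes with every \(\rho(\gamma)\).
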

\begin{proof}
Choose a lift \(u\in E\) of \(z\), and put
\(d(x)=[x,u]\). Since \(z\) is central in \(\Gamma\),
the image of this commutator in \(\Gamma\) is trivial;
hence \(d(x)\in A\). The identity
\[
 [xy,u]=x[y,u]x^{-1}[x,u]
\]
and commutativity in \(A\) imply
\[
 d(xy)=d(x)+\rho(p(x))d(y).
\]
For \(a\in A\), conjugation by \(u\) is \(\rho(z)\), so
\[
 d(a)=a-\rho(z)a=Da.
\]
It follows that \(\phi(x)=(d(x),p(x))\) is a homomorphism.
If \(\phi(x)=1\), then \(p(x)=1\), so \(x=a\in A\),
and \(Da=0\). Invertibility of \(D\) gives \(a=0\).

The image of \(\phi\) projects onto \(\Gamma\), and its
intersection with the translation subgroup is \(D(A)\).
Every coset of this image in \(A\rtimes\Gamma\) has a
representative in \(A\): multiply a given element by the
inverse of an image element having the same second coordinate.
Two translation representatives give the same coset exactly
when their difference belongs to \(D(A)\).
Therefore the index is \([A:D(A)]\).
The operator \(D\) has an integral matrix in a basis of \(A\)
and nonzero determinant. The usual lattice index formula gives
\([A:D(A)]=|\det D|\).

Because \(z\) is central, \(D\) commutes with every
\(\rho(\gamma)\), as does \(D^{-1}\). Thus
\[
 b(x)=D^{-1}d(x)
\]
is a crossed homomorphism into \(V\), with \(b(a)=a\).
The resulting normalized affine embedding has translation
kernel exactly \(A\). If \(\rho(z)=-I\), then \(D=2I\),
which gives the final assertion.
\end{proof}

The normalized coordinates may lie in \(D^{-1}A\) rather
than \(A\). In the case \(D=2I\), they can be half-integral.

We can now combine the above results into a strategy for proving property~\(\rt\). 
Theorem \ref{thm:strategy} stated in the introduction is a slightly specialized case of the following theorem.
\begin{theorem}
\label{thm:generalcriterion}
Let \(G\) be a countable discrete group and \(K\subseteq G\)
a normal subgroup. Suppose \(B=K/[K,K]\) is finitely generated abelian.
Denote
\[
 A=B/\tor(B),\quad V=A\otimes_\Z\R,
 \quad\Gamma=G/K,
 \quad E=(G/[K,K])/\tor(B).
\]
The subgroup \(\tor(B)\) is normal in \(G/[K,K]\), and there is
an exact sequence \[1\to A\to E\to\Gamma\to1.\]
Assume the following:
\begin{enumerate}[label=\textup{(\roman*)}]
 \item \(\Gamma\) is a lattice in a second-countable
 unimodular locally compact group \(S\) with property~\(\rt\).
 \item The action of \(\Gamma\) on \(A\), extended linearly
 to \(V\), is the restriction of a continuous representation
 \(\rho:S\to\GL(V)\) satisfying
 \(|\det\rho(s)|=1\) for every \(s\in S\).
 \item The vector space \(V\) is spanned by vectors each of
 which can be contracted to zero by a sequence in \(S\).
 \item There is a crossed homomorphism \(b:E\to V\) whose
 restriction to \(A\) is its natural inclusion in \(V\).
\end{enumerate}
Then \(G/[K,K]\) has property~\(\rt\). Consequently
\(G/K_{[c+1]}\) has property~\(\rt\) for every \(c\ge1\),
as does every quotient of \(G\) with nilpotent or
finite-by-nilpotent image of \(K\).

In condition \textup{(iv)}, it suffices that there be
\(z\in Z(\Gamma)\) for which \(I-\rho(z)\) is invertible
on \(V\). In particular, it suffices that a central element
act as \(-1\).
\end{theorem}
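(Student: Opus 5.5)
The plan is to realize \(E\) as a lattice in \(V\rtimes S\) and then climb back to \(G/[K,K]\) through finite-kernel and nilpotent permanence. First come the preliminary algebraic claims. Since \(\tor(B)\) is characteristic in \(B=K/[K,K]\), and \(B\) is normal in \(G/[K,K]\), the subgroup \(\tor(B)\) is normal in \(G/[K,K]\). Quotienting the sequence \(1\to B\to G/[K,K]\to\Gamma\to1\) by \(\tor(B)\) gives \(1\to A\to E\to\Gamma\to1\). The subgroup \(A\) is free abelian of finite rank, because \(B\) is finitely generated.

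Next I build the affine embedding. Using the crossed homomorphism \(b\) from (iv), I define \(\psi:E\to V\rtimes S\) by \(\psi(x)=(b(x),p(x))\), viewing \(\Gamma\subseteq S\) and using (ii) to identify the \(\Gamma\)-action on \(V\) with \(\rho|_\Gamma\). By the remark preceding \cref{prop:centralaffine}, \(\psi\) is an injective homomorphism. I put \(\Delta=\psi(E)\). Then \(p(\Delta)=\Gamma\) is a lattice in \(S\) by (i). I also need \(\Delta\cap V=A\): an element of \(\Delta\) with trivial \(S\)-coordinate comes from \(x\in\ker p=A\), and then \(b(x)=x\). Since \(A\) is a full lattice in \(V=A\otimes\R\), \cref{prop:affinelattice} applies together with (ii) and (iii). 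It shows that \(\Delta\cong E\) has property~\(\rt\).

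Then I pass back to \(G/[K,K]\). Its quotient by the finite normal subgroup \(\tor(B)\) is \(E\), so finite-kernel permanence gives property~\(\rt\) for \(G/[K,K]\). \Cref{thm:quotienttower} then yields property~\(\rt\) for every \(G/K_{[c+1]}\), and for every quotient of \(G\) in which the image of \(K\) is nilpotent or finite-by-nilpotent.

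For the final sufficiency clause, I apply \cref{prop:centralaffine} to the extension \(1\to A\to E\to\Gamma\to1\) with the given central \(z\). It produces a crossed homomorphism \(b=D^{-1}d:E\to V\) restricting to the identity on \(A\), which is exactly (iv). The case \(\rho(z)=-1\) gives \(D=2I\). The main point to check carefully is \(\Delta\cap V=A\), together with the compatibility of \(\rho|_\Gamma\) with the conjugation action. This is routine once \(b|_A\) is the inclusion. The real analytic content is already contained in \cref{prop:affinelattice}.
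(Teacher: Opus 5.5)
Your proposal is correct and follows the paper's proof essentially step for step. You embed \(E\) into \(V\rtimes S\) via \(x\mapsto(b(x),p(x))\) with translation kernel exactly \(A\), apply \cref{prop:affinelattice}, pass back through the finite normal kernel \(\tor(B)\), and conclude with \cref{thm:quotienttower}, using \cref{prop:centralaffine} for the sufficiency clause; your explicit check that \(\Delta\cap V=A\) is just the verification of the paper's unelaborated claim that the translation kernel is exactly \(A\).
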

\begin{proof}
Conjugation preserves torsion in the abelian normal subgroup
\(B\), so \(\tor(B)\) is finite and normal as asserted.
The crossed homomorphism in~(iv) embeds \(E\) into
\(V\rtimes S\), with projection \(\Gamma\) and translation
kernel exactly \(A\). Conditions~(i)--(iii) and
\cref{prop:affinelattice} therefore give property~\(\rt\)
for \(E\). The natural surjection
\(G/[K,K]\to E\) has finite kernel \(\tor(B)\),
so permanence under finite normal kernels gives property~\(\rt\) for
\(G/[K,K]\). Apply \cref{thm:quotienttower} for the remaining
statements. The sufficient condition involving a central element is
\cref{prop:centralaffine}.
\end{proof}

For completeness, condition~(iv) can also be expressed in terms
of the extension class. Choose a section
\(s:\Gamma\to E\), \(s(1)=1\), and write
\[
 s(\gamma)s(\eta)
 =\alpha(\gamma,\eta)s(\gamma\eta),\qquad
 \alpha(\gamma,\eta)\in A.
\]
If a map \(h:\Gamma\to V\) satisfies
\[
 \alpha(\gamma,\eta)
 =h(\gamma)+\rho(\gamma)h(\eta)-h(\gamma\eta),
\]
then \(b(a s(\gamma))=a+h(\gamma)\) is the required
crossed homomorphism. Conversely, restricting such a crossed homomorphism \(b\)
to the section gives this formula. Thus~(iv) says precisely
that the extension class becomes zero in \(H^2(\Gamma;V)\).
The central-element construction is an explicit way of obtaining
this vanishing without calculating a cohomology group.

\section{Mapping class group quotients}
\label{sec:groups}

\subsection{The Torelli group and its lower central series}

All mapping classes are represented by orientation-preserving
homeomorphisms. Denote \(H_g=H_1(\Sigma_g;\Z)\). The algebraic
intersection pairing \(\omega:H_g\times H_g\to\Z\) is
unimodular and alternating. Choose a symplectic basis
\(e_1,\ldots,e_g,f_1,\ldots,f_g\), so that
\(\omega(e_i,f_j)=\delta_{ij}\) and the pairings among the
\(e_i\)'s and among the \(f_i\)'s vanish. Denote the algebraic
symplectic group by \(\Sp_{2g}\).
The homological action is a surjective homomorphism
\[
 \sigma:\Mod(\Sigma_g)\longrightarrow\Sp_{2g}(\Z).
\]
Its kernel is the Torelli group \(\mathcal{T}_g\), see \cite{FarbMargalit} for this and
other basic facts about mapping class groups. 

For \(c\ge1\), we write
\[
 \mathcal{Q}_{g,c}=\Mod(\Sigma_g)/(\mathcal{T}_g)_{[c+1]},
 \qquad
 \mathcal{N}_{g,c}=\mathcal{T}_g/(\mathcal{T}_g)_{[c+1]}.
\]
We assume \(g\ge3\).

Since \((\mathcal{T}_g)_{[j]}\) is characteristic in \(\mathcal{T}_g\), it is
normal in \(\Mod(\Sigma_g)\). The quotient \(\mathcal{Q}_{g,c}\) is therefore
defined without lifting the symplectic action to \(\mathcal{T}_g\).
A group is
\emph{nilpotent of class at most \(c\)} when its
\((c+1)\)-st lower-central subgroup is trivial. In particular,
\(\mathcal{N}_{g,c}\) has class at most \(c\).

\begin{lemma}
\label{lem:nilab}
For \(g\ge3\) and \(c\ge1\), \(\mathcal{N}_{g,c}\) is finitely
generated and nilpotent. Moreover,
\[
 (\mathcal{N}_{g,c})_{\ab}\cong \mathcal{T}_g/[\mathcal{T}_g,\mathcal{T}_g],
 \qquad \mathcal{Q}_{g,c}/[\mathcal{N}_{g,c},\mathcal{N}_{g,c}]\cong \mathcal{Q}_{g,1}.
\]
These identifications respect the conjugation actions induced
from \(\Mod(\Sigma_g)\).
\end{lemma}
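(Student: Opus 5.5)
The plan is to combine Johnson's finite generation theorem for the Torelli group with routine lower-central-series identities. For finite generation we invoke Johnson's theorem that \(\mathcal{T}_g\) is finitely generated for \(g\ge3\) (by finitely many bounding-pair maps). The quotient \(\mathcal{N}_{g,c}\) of \(\mathcal{T}_g\) is then finitely generated. It is nilpotent of class at most \(c\), because
\[
 (\mathcal{N}_{g,c})_{[c+1]}=(\mathcal{T}_g)_{[c+1]}/(\mathcal{T}_g)_{[c+1]}=1 .
\]
This uses that the image of the lower central series under a surjection \(\mathcal{T}_g\to\mathcal{N}_{g,c}\) is the lower central series of the image, which follows by induction on \(j\) from \(p([X,Y])=[p(X),p(Y)]\).

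For the abelianization, the same fact gives
\[
 [\mathcal{N}_{g,c},\mathcal{N}_{g,c}]=[\mathcal{T}_g,\mathcal{T}_g]/(\mathcal{T}_g)_{[c+1]} ,
\]
and this is meaningful because \((\mathcal{T}_g)_{[c+1]}\subseteq(\mathcal{T}_g)_{[2]}\) for \(c\ge1\). The third isomorphism theorem then yields \((\mathcal{N}_{g,c})_{\ab}\cong\mathcal{T}_g/[\mathcal{T}_g,\mathcal{T}_g]\). In the same way,
\[
 \mathcal{Q}_{g,c}/[\mathcal{N}_{g,c},\mathcal{N}_{g,c}]\cong\Mod(\Sigma_g)/[\mathcal{T}_g,\mathcal{T}_g]=\mathcal{Q}_{g,1},
\]
which is the computation already recorded in the proof of \cref{thm:quotienttower}. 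The subgroups involved are normal in \(\Mod(\Sigma_g)\) because lower-central subgroups of a normal subgroup are characteristic.

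Both isomorphisms are induced by the identity map on \(\Mod(\Sigma_g)\) or on \(\mathcal{T}_g\). They therefore intertwine conjugation by any \(x\in\Mod(\Sigma_g)\), which is the asserted compatibility with the conjugation actions. The only external input, and hence the main point, is Johnson's finite generation for \(g\ge3\). Everything else is formal, so I expect no real obstacle beyond citing that theorem correctly.
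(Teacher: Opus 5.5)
Your proof is correct and follows the paper's argument essentially step for step. It uses Johnson's finite generation theorem for \(g\ge3\), the identity \([\mathcal{N}_{g,c},\mathcal{N}_{g,c}]=[\mathcal{T}_g,\mathcal{T}_g]/(\mathcal{T}_g)_{[c+1]}\) (valid since \((\mathcal{T}_g)_{[c+1]}\subseteq[\mathcal{T}_g,\mathcal{T}_g]\)), the third isomorphism theorem, and equivariance because all maps are induced by quotient maps from \(\Mod(\Sigma_g)\); the only addition is that you spell out why the class is at most \(c\), which the paper treats as immediate from the definition.
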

\begin{proof}
The Torelli group is finitely generated for \(g\ge3\), by
\cite{JohnsonI}. Hence \(\mathcal{N}_{g,c}\) is finitely generated.
For a normal subgroup \(K\subseteq I\), the commutator
subgroup of \(I/K\) is \([I,I]K/K\). Here
\(K=I_{[c+1]}\subseteq[I,I]\), with equality when
\(c=1\). Consequently
\[
 [\mathcal{N}_{g,c},\mathcal{N}_{g,c}]=(\mathcal{T}_g)_{[2]}/(\mathcal{T}_g)_{[c+1]}.
\]
Taking quotients gives the two isomorphisms. Both are induced by
quotient maps from \(\Mod(\Sigma_g)\), so they respect the conjugation action.
\end{proof}

\subsection{The integral Johnson module}

Let
\[
 A_g=\frac{\mathcal{T}_g/[\mathcal{T}_g,\mathcal{T}_g]}
 {\tor(\mathcal{T}_g/[\mathcal{T}_g,\mathcal{T}_g])}
\]
be the torsion-free quotient of \(\mathcal{T}_g/[\mathcal{T}_g,\mathcal{T}_g]\).
Conjugation on \(\mathcal{T}_g/[\mathcal{T}_g,\mathcal{T}_g]\) is trivial for elements of \(\mathcal{T}_g\),
so it factors through \(\Sp_{2g}(\Z)\). The same is true on
\(\tor(\mathcal{T}_g/[\mathcal{T}_g,\mathcal{T}_g])\) and \(A_g\).

Let \[\Omega=\sum_{i=1}^g e_i\wedge f_i\] and
\[j(v)=v\wedge\Omega.\] This is an equivariant map
\(j:H_g\to\Lambda^3 H_g\).

The following is the torsion-free part of Johnson's theorem
\cite{JohnsonIII}, see also \cite[p.~398 and Theorem~3.1]{DimcaPapadima}.
\begin{theorem}[Johnson's abelianization theorem]
\label{thm:johnsoninput}
For \(g\ge3\), the Johnson homomorphism gives a surjective
\(\Mod(\Sigma_g)\)-equivariant homomorphism
\[
 \tau_J:\mathcal{T}_g\longrightarrow\Lambda^3 H_g/j(H_g)
\]
and induces an equivariant isomorphism
\[
 A_g\cong\Lambda^3 H_g/j(H_g).
\]
\end{theorem}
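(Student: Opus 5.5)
This statement is Johnson's theorem, cited from \cite{JohnsonIII}, and I do not plan to re-prove it from scratch. The aim is to assemble it from standard ingredients and check that the form stated here (torsion-free quotient, equivariance for the whole of \(\Mod(\Sigma_g)\)) follows.

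\emph{Construction of \(\tau_J\).} I would use the action of \(\mathcal{T}_g\) on \(\pi=\pi_1(\Sigma_{g,1})\) for the once-bordered surface. For \(\phi\in\mathcal{I}_{g,1}\) and \(x\in\pi\), we have \(\phi(x)x^{-1}\in\pi_{[2]}\). Its class in \(\pi_{[2]}/\pi_{[3]}\cong\Lambda^2H_g\) depends only on the homology class of \(x\). This defines a homomorphism \(\mathcal{I}_{g,1}\to\Hom(H_g,\Lambda^2H_g)\cong H_g\otimes\Lambda^2H_g\), where Poincaré duality identifies \(H_g^*\cong H_g\).

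Three points then need checking:
\begin{enumerate}[label=\textup{(\alph*)}]
\item The homomorphism property follows from the cocycle identity \(\phi\psi(x)x^{-1}=\phi(\psi(x)x^{-1})\,\phi(x)x^{-1}\) together with triviality of the \(\mathcal{T}\)-action on the graded quotient.
\item Equivariance \(\tau(f\phi f^{-1})=\sigma(f)\cdot\tau(\phi)\) is a direct naturality computation.
\item The image lies in \(\Lambda^3H_g\subset H_g\otimes\Lambda^2H_g\). This is shown by evaluating on bounding pair maps, which generate the Torelli group for \(g\ge3\) by \cite{JohnsonI}, giving \(\tau(T_\gamma T_\delta^{-1})=a_1\wedge b_1\wedge c\)-type elements.
\end{enumerate}
Passing from \(\Sigma_{g,1}\) to the closed surface changes the target by quotienting out \(j(H_g)\), the image of the point-pushing subgroup. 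This yields the closed-surface map \(\tau_J\).

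\emph{Surjectivity.} Since \(\Lambda^3H_g/j(H_g)\) is spanned over \(\Z\) by the elements \(e_i\wedge e_j\wedge e_k\), \(e_i\wedge e_j\wedge f_k\), and so on, it suffices to exhibit bounding pair maps realizing each basis element. Equivariance under \(\Sp_{2g}(\Z)\) then reduces this to a few orbit representatives.

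\emph{Identification of the torsion-free abelianization.} This is the main obstacle. Surjectivity makes \(\tau_J\) factor through \(\mathcal{T}_g/[\mathcal{T}_g,\mathcal{T}_g]\), and since the target is free abelian it factors further through \(A_g\). One must show that the kernel of the induced map is torsion. Johnson proves this by computing the full abelianization \(H_1(\mathcal{T}_g;\Z)\cong\Lambda^3H_g/j(H_g)\oplus(\text{2-torsion})\), where the 2-torsion is detected by the Birman--Craggs homomorphisms. That computation is genuinely hard. For only the torsion-free statement, a rank comparison is enough, and I would get it via the finite presentation of \(\mathcal{T}_g\) for \(g\ge3\) or via \(H_1(\mathcal{T}_g;\Q)\). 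Concretely, I would show \(\dim_\Q H_1(\mathcal{T}_g;\Q)\le\binom{2g}{3}-2g\) by checking that the rational span of bounding pair classes modulo commutator relations (lantern relations among them) has at most this dimension. A surjection between free abelian groups of equal rank is then an isomorphism after killing torsion. Rather than redo Johnson's relation analysis, I would cite \cite{JohnsonIII} and \cite[Theorem~3.1]{DimcaPapadima} for this rank bound.

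Finally, equivariance of the induced isomorphism \(A_g\cong\Lambda^3H_g/j(H_g)\) is inherited from (b), because the conjugation action on \(A_g\) factors through \(\Sp_{2g}(\Z)\), as noted before the statement.
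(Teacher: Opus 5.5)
Your proposal matches the paper, which gives no argument of its own and simply cites Johnson \cite{JohnsonIII} and Dimca--Papadima \cite[Theorem~3.1]{DimcaPapadima}. Your sketch of the construction, equivariance and surjectivity is standard, and you correctly isolate the one hard step, namely the bound \(\dim_\Q H_1(\mathcal{T}_g;\Q)\le d_g\) (equivalently, that the kernel on the abelianization is torsion), and defer it to the same sources.

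Two small corrections. First, no finite presentation of \(\mathcal{T}_g\) is known for \(g\ge3\); finite presentability is an open problem. None is needed, though, because an upper bound on \(H_1\) uses only Johnson's finite generating set together with relations that hold in the group. Second, \(\tau_J\) factors through the abelianization because its target is abelian, not because it is surjective.
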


The distinction between \(\mathcal{T}_g/[\mathcal{T}_g,\mathcal{T}_g]\) and \(A_g\) will be used below:
the affine realization is defined after quotienting by \(\tor(\mathcal{T}_g/[\mathcal{T}_g,\mathcal{T}_g])\),
and property~\(\rt\) then passes back through this finite kernel.

\begin{lemma}\label{lem:lattice}
For \(g\ge2\), \(j\) is injective with primitive image.
Consequently \(A_g\) is a free abelian group of rank
\[
 d_g=\binom{2g}{3}-2g \qquad(g\ge3).
\]
In particular, \(d_3=14\), \(d_4=48\), and \(d_5=110\).
\end{lemma}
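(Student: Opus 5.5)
To prove \cref{lem:lattice}, I would show directly that \(j\) is injective with primitive image by exhibiting an explicit integral left inverse, or at least a surjection \(\Lambda^3H_g\to H_g\) whose composite with \(j\) is unimodular. The natural candidate is the contraction \(C:\Lambda^3H_g\to H_g\) defined by
\[
 C(x\wedge y\wedge z)=\omega(x,y)z+\omega(y,z)x+\omega(z,x)y,
\]
which is integral, equivariant and well defined since the formula is alternating in \(x,y,z\).

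First I will compute \(C\circ j\) on a basis vector, say \(v=e_k\). Write
\[
 e_k\wedge\Omega=\sum_i e_k\wedge e_i\wedge f_i .
\]
For \(i\ne k\), the term \(e_k\wedge e_i\wedge f_i\) contributes \(\omega(e_i,f_i)e_k=e_k\). The term with \(i=k\) vanishes. Hence \(C(j(e_k))=(g-1)e_k\), and by symmetry \(C\circ j=(g-1)\,\mathrm{id}\). This gives injectivity over \(\Z\) for \(g\ge2\). However, it yields primitivity only up to the factor \(g-1\), which is the main obstacle.

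To get primitivity I would argue instead with coordinates. The image \(j(H_g)\) is primitive exactly when \(j\) is injective and \(j\otimes\mathbb F_p\) is injective for every prime \(p\); equivalently, when the integral matrix of \(j\) has a \(2g\times2g\) minor equal to \(\pm1\), or the gcd of its maximal minors is \(1\). In the standard basis \(\{x\wedge y\wedge w\}\) of \(\Lambda^3H_g\), the vector \(j(e_k)\) is the sum of the basis elements \(e_k\wedge e_i\wedge f_i\) for \(i\ne k\), each with coefficient \(\pm1\). The vector \(j(f_k)\) is likewise the sum of the \(f_k\wedge e_i\wedge f_i\). These supports are pairwise disjoint for distinct basis vectors \(v\), since a triple \(\{e_k,e_i,f_i\}\) determines \(k\) as the unique unpaired element. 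So for \(g\ge2\), choose for each \(k\) one index \(i(k)\ne k\). The coordinate projection onto the \(2g\) basis vectors \(e_k\wedge e_{i(k)}\wedge f_{i(k)}\) and \(f_k\wedge e_{i(k)}\wedge f_{i(k)}\) sends \(j\) to a diagonal matrix with entries \(\pm1\). This is an integral left inverse of \(j\), so \(j(H_g)\) is a direct summand.

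Finally, \(\Lambda^3H_g\) is free of rank \(\binom{2g}{3}\). The quotient by a direct summand of rank \(2g\) is free of rank \(\binom{2g}{3}-2g\). By \cref{thm:johnsoninput}, this quotient is \(A_g\) for \(g\ge3\). Plugging in \(g=3,4,5\) gives \(20-6=14\), \(56-8=48\) and \(120-10=110\).
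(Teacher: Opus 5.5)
Your proposal is correct. After the contraction detour, which you rightly note only gives primitivity up to the factor \(g-1\), it is essentially the paper's argument: the supports of the \(j(v)\) are disjoint, one signed coordinate is chosen for each \(v\) to get an integral left inverse, and Johnson's theorem gives the rank count. One cosmetic point: the coordinate projection composed with \(j\) is a diagonal \(\pm1\) matrix, so you need to compose with the sign correction to get an honest left inverse, which is what the paper does by taking signed coefficients.
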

\begin{proof}
For a basis vector \(v=e_i\) or \(f_i\), the nonzero terms
of \(j(v)\) are \(v\wedge e_k\wedge f_k\), \(k\ne i\),
each with coefficient \(1\) or \(-1\) in the ordered exterior
basis. Such a triple has a unique full symplectic pair, and its
remaining singleton determines \(v\). Thus the supports of the
\(2g\) vectors \(j(v)\) are disjoint. Choose one term for
each \(v\), possible since \(g\ge2\), and take its signed
coefficient. These coefficient functionals define an integral map
\(p:\Lambda^3 H_g\to H_g\) with \(pj=1\). Hence \(j(H_g)\)
is an integral direct summand. The quotient is free and its rank
is the difference of the ranks.
\end{proof}

The splitting in the proof need not be symplectic-equivariant.
For the real representation we will use an elementary weight
calculation. We first specify the
meaning of weights in this argument.

\paragraph{Weights for a diagonal one-parameter subgroup.}
Assume \(g\ge3\). Write \(H_{g,\R}=H_g\otimes_\Z\R\), and extend \(j\) linearly
to \(H_{g,\R}\). Johnson's identification gives
\[
 V_g:=A_g\otimes_\Z\R
 \cong \Lambda^3 H_{g,\R}/j(H_{g,\R}).
\]
For \(t\in\R\), consider the linear transformation
\[
 a_t=
 \begin{pmatrix}
 e^t I_g&0\\
 0&e^{-t}I_g
 \end{pmatrix},
 \qquad
 a_t e_i=e^t e_i,\quad a_t f_i=e^{-t}f_i.
\]
It preserves the symplectic pairing, since
\(\omega(a_t e_i,a_t f_j)=e^t e^{-t}\delta_{ij}=\delta_{ij}\),
and the other pairings remain zero. Thus
\(a_t\in\Sp_{2g}(\R)\), and \[a_{t+s}=a_ta_s,\] 
providing a one-parameter subgroup that we need for our argument.
We use the same notation for the induced operators on exterior
powers and on their invariant quotients.

For such a representation \(V\) and \(k\in\R\), define
\[
 V(k)=\{v\in V:a_t v=e^{kt}v\text{ for every }t\in\R\}.
\]
If \(V(k)\ne0\), we call \(k\) a \emph{weight} and \(V(k)\)
its \emph{weight space}, always relative to this particular
one-parameter subgroup. Thus a weight space is a simultaneous
eigenspace for all the \(a_t\),  \(k\) is the exponent, whereas
the eigenvalue of \(a_t\) is \(e^{kt}\). For example, \(e_i\)
has weight \(1\), \(f_i\) has weight \(-1\), and weight zero
means being fixed by every \(a_t\).

\begin{lemma}
\label{lem:weights}
For \(g\ge3\), the induced action of \((a_t)_{t\in\R}\)
on \(V_g\) has the direct-sum decomposition
\[
 V_g=V_g(3)\oplus V_g(1)\oplus V_g(-1)\oplus V_g(-3).
\]
On \(V_g(k)\) it is multiplication by \(e^{kt}\), and
\(V_g(0)=0\). Moreover,
\(-I\in\Sp_{2g}(\Z)\) acts as \(-1\) on \(A_g\).
\end{lemma}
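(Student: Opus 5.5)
We compute the weights on \(\Lambda^3 H_{g,\R}\) first and then pass to the quotient by \(j(H_{g,\R})\). The vectors \(e_i\) have weight \(1\) and the \(f_i\) have weight \(-1\). Hence a basis wedge \(x\wedge y\wedge w\), built from basis vectors of \(H_{g,\R}\), is an eigenvector of every \(a_t\). Its weight is \(p-q\), where \(p\) is the number of \(e\)'s and \(q\) the number of \(f\)'s, with \(p+q=3\). The only possible weights are therefore \(3,1,-1,-3\), and
\[
 \Lambda^3 H_{g,\R}=\bigoplus_{k\in\{3,1,-1,-3\}}\Lambda^3H_{g,\R}(k).
\]
In particular there is no weight \(0\), because \(p-q\) is odd.

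Next we use that \(j\) is \(a_t\)-equivariant, so its image is an invariant subspace. Indeed, \(a_t\Omega=\sum e^te_i\wedge e^{-t}f_i=\Omega\), so \(j(a_tv)=a_tv\wedge\Omega=a_t(v\wedge\Omega)\). Thus \(j(H_{g,\R})\) is \(a_t\)-stable, and it is spanned by weight vectors: \(j(e_i)\) has weight \(1\) and \(j(f_i)\) has weight \(-1\).

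Because both the ambient space and the subspace decompose into weight spaces, the quotient does too:
\[
 V_g(k)=\Lambda^3H_{g,\R}(k)/j(H_{g,\R})(k).
\]
The operator \(a_t\) acts on \(V_g(k)\) by \(e^{kt}\). Concretely, the projection onto a weight space is a polynomial in a single \(a_{t_0}\) with distinct eigenvalues \(e^{3t_0},e^{t_0},e^{-t_0},e^{-3t_0}\), so it commutes with the quotient map. Then \(V_g(0)=0\) because there is no weight zero upstairs. For completeness one can note that \(V_g(\pm3)\cong\Lambda^3\) of the span of the \(e\)'s or of the \(f\)'s, which is nonzero since \(g\ge3\). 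The spaces \(V_g(\pm1)\) are nonzero by a dimension count: \(g\binom g2-g>0\). The statement, however, only needs the decomposition itself.

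Finally, \(-I\) acts on \(\Lambda^3H_g\) as \((-1)^3=-1\), and it preserves \(j(H_g)\). Through the equivariant isomorphism of \cref{thm:johnsoninput}, it therefore acts as \(-1\) on \(A_g\cong\Lambda^3H_g/j(H_g)\).

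The only mildly delicate point is the compatibility of the weight decomposition with the quotient, which is handled by the spectral-projection remark above. Everything else is bookkeeping of exterior degrees.
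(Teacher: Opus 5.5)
Your proposal is correct and follows essentially the same route as the paper. Both arguments grade $\Lambda^3H_{g,\R}$ by the number of $e$'s versus $f$'s; the paper writes this as $\Lambda^rE\wedge\Lambda^{3-r}F$ with weight $2r-3$. Both then use $a_t\Omega=\Omega$ to see that $j(H_{g,\R})=j(E)\oplus j(F)$ is compatible with the grading, pass the decomposition to the quotient, and read off the action of $-I$ as $(-1)^3$. Your spectral-projection remark makes explicit the compatibility with the quotient that the paper records through its table of natural identifications.
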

\begin{proof}
Set
\[
 E=\operatorname{span}_\R\{e_1,\ldots,e_g\},\qquad
 F=\operatorname{span}_\R\{f_1,\ldots,f_g\}.
\]
Then \(H_{g,\R}=E\oplus F\), and
\[
 W:=\Lambda^3 H_{g,\R}
 =\bigoplus_{r=0}^3
   \bigl(\Lambda^r E\wedge\Lambda^{3-r}F\bigr).
\]
The summand indexed by \(r\) is spanned by exterior triples
with \(r\) factors from \(E\) and \(3-r\) factors from \(F\).
The action on an exterior product is obtained by acting on
each factor. Consequently, every vector \(w\) in this summand
satisfies
\[
 a_t w=e^{rt}e^{-(3-r)t}w=e^{(2r-3)t}w.
\]
In other words, weights add under exterior products. The four
possibilities \(r=3,2,1,0\) give weights \(3,1,-1,-3\),
respectively.

We now check explicitly what happens upon taking the quotient
by \(j(H_{g,\R})\). Since
\[
 a_t\Omega
 =\sum_{i=1}^g(e^te_i)\wedge(e^{-t}f_i)
 =\Omega,
\]
the bivector \(\Omega\) has weight zero. Therefore
\[
 a_t j(v)=a_t(v\wedge\Omega)
         =(a_t v)\wedge\Omega=j(a_t v).
\]
It follows that \(j(E)\subset W(1)\) and
\(j(F)\subset W(-1)\). Moreover,
\[
 j(H_{g,\R})=j(E)\oplus j(F),
\]
because \(j\) is injective and \(H_{g,\R}=E\oplus F\).
Thus this subspace is already the direct sum of its intersections
with the weight spaces of \(W\). Taking the quotient gives the
following natural identifications:
\begin{center}
\begin{tabular}{ccl}
\toprule
Weight \(k\) & \(W(k)\) & \(V_g(k)\), naturally isomorphic to\\
\midrule
\(3\) & \(\Lambda^3E\) & \(\Lambda^3E\)\\
\(1\) & \(\Lambda^2E\wedge F\) &
  \((\Lambda^2E\wedge F)/j(E)\)\\
\(-1\) & \(E\wedge\Lambda^2F\) &
  \((E\wedge\Lambda^2F)/j(F)\)\\
\(-3\) & \(\Lambda^3F\) & \(\Lambda^3F\)\\
\bottomrule
\end{tabular}
\end{center}
The quotient therefore inherits a direct-sum decomposition into
these weight spaces and no weight-zero space appears.
Finally, \(-I\) acts by \((-1)^3=-1\) on \(\Lambda^3H_g\),
and the same holds on its quotient \(A_g\).
\end{proof}

For example, when \(g=3\), these four weight spaces have
dimensions \(1,6,6,1\). Indeed, the two pure exterior cubes
have dimension \(\binom33=1\), while each mixed summand
has dimension \(\binom32\cdot3=9\), from which the
three-dimensional subspace \(j(E)\) or \(j(F)\) is removed.
Their dimensions add to \(\dim V_3=14\).

The absence of weight zero is the point needed later.
If \(v\in V_g(k)\), then for any norm on \(V_g\),
\[
 \|a_t v\|=e^{kt}\|v\|.
\]
Hence a vector of positive weight is contracted to zero as
\(t\to-\infty\), and a vector of negative weight is contracted
as \(t\to+\infty\). Every vector is a sum of these weight
components, so \(V_g\) is spanned by vectors that can be
contracted by elements of \(\Sp_{2g}(\R)\). We do not require
one choice of direction in \(t\) to contract all components
simultaneously. This is precisely the spanning condition used
in \cref{prop:affineT,prop:arithmeticT}.

\subsection{The affine symplectic group}
\label{sec:affine}

We verify the hypotheses of the general strategy for the integral
Johnson module. The first issue is contraction; the second is
volume preservation, which is needed to pass to a lattice.

\begin{proposition}
\label{prop:arithmeticT}
For \(g\ge3\), the groups
\[
 V_g\rtimes\Sp_{2g}(\R)
 \quad\text{and}\quad A_g\rtimes\Sp_{2g}(\Z)
\]
have property~\(\rt\).
\end{proposition}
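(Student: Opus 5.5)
The plan is to apply \cref{prop:affinelattice} with \(S=\Sp_{2g}(\R)\), \(V=V_g\), \(\rho\) the representation on \(\Lambda^3H_{g,\R}/j(H_{g,\R})\), and \(\Delta=A_g\rtimes\Sp_{2g}(\Z)\). Property~\(\rt\) for \(V_g\rtimes\Sp_{2g}(\R)\) then follows from \cref{prop:affineT}, and for the discrete group from the ``in particular'' clause. I will verify the hypotheses in the order they appear in \cref{prop:affinelattice}.

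\emph{Ambient group.} \(\Sp_{2g}(\R)\) is a connected simple real Lie group of real rank \(g\ge3\ge2\), so it has property~\(\rt\) by Kazhdan's theorem \cite{BHV}. It is second countable and unimodular, since it is a connected semisimple Lie group. The subgroup \(\Sp_{2g}(\Z)\) is a lattice by Borel--Harish-Chandra; as \(\Sp_{2g}\) has no \(\Q\)-characters, \(\Gamma=\Sp_{2g}(\Z)\) has finite covolume.

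\emph{Representation.} The action of \(\Sp_{2g}(\R)\) on \(\Lambda^3H_{g,\R}\) is polynomial, hence continuous. It preserves \(\Omega\), so it preserves \(j(H_{g,\R})\) and descends to \(V_g\). By \cref{thm:johnsoninput} the identification \(A_g\cong\Lambda^3H_g/j(H_g)\) is equivariant, so this real representation restricts on \(\Sp_{2g}(\Z)\) to the action on \(A_g\otimes\R\). Moreover \(\Sp_{2g}(\Z)\) preserves the full lattice \(A_g\), which is full by \cref{lem:lattice}. For the determinant condition, \(s\mapsto\det\rho(s)\) is a continuous homomorphism \(\Sp_{2g}(\R)\to\R^\times\). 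Since \(\Sp_{2g}(\R)\) is perfect (simple, or generated by unipotent transvections), this homomorphism is trivial, so \(\det\rho=1\). This is the one step that needs a small argument rather than a citation. An alternative is that \(\rho\) lands in the image of a connected semisimple group and hence in \(\SL(V_g)\).

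\emph{Contraction.} By \cref{lem:weights}, \(V_g\) is the sum of weight spaces of weights \(\pm1,\pm3\) for \(a_t\), with no weight zero. Hence each weight vector is contracted to zero by \(a_{t_n}\) with \(t_n\to\mp\infty\), so \(V_g\) is spanned by contractible vectors, as noted after \cref{lem:weights}.

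With these hypotheses in place, \cref{prop:affinelattice} shows that \(A_g\rtimes\Sp_{2g}(\Z)\) is a lattice in \(V_g\rtimes\Sp_{2g}(\R)\) and that both groups have property~\(\rt\). I expect the main obstacle to be only the routine citations, namely property~\(\rt\) of \(\Sp_{2g}(\R)\) and the lattice property of \(\Sp_{2g}(\Z)\), together with the determinant check.
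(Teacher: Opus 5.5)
Your proposal is correct and follows the paper's proof. Property~\(\rt\) of \(V_g\rtimes\Sp_{2g}(\R)\) comes from \cref{prop:affineT} via the absence of weight zero in \cref{lem:weights}, and the discrete group is then handled by the lattice passage of \cref{prop:affinelattice}. The one local difference is that you obtain \(\det\rho=1\) and unimodularity abstractly, from perfectness of \(\Sp_{2g}(\R)\) and unimodularity of connected semisimple groups. The paper instead computes them explicitly: \(\det(\Lambda^3 s)=(\det s)^{\binom{2g-1}{2}}=1\), divided by the determinant on the invariant subspace \(j(H_{g,\R})\), and \(\det(\operatorname{Sym}^2 s)=1\) for the adjoint action. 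Both routes are valid.
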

\begin{proof}
Let \(\rho:\Sp_{2g}(\R)\to\GL(V_g)\) denote the action.
The real group \(\Sp_{2g}(\R)\) has property~\(\rt\),
and \(\Sp_{2g}(\Z)\) is a lattice in it.
These are the classical higher-rank and arithmetic-lattice
theorems, see \cite[Chapters~1 and~3 and Appendix~B]{BHV}
and \cite{Raghunathan}.

By \cref{lem:weights}, \(V_g\) is the direct sum of its weight
spaces with weights \(3,1,-1,-3\). A vector of positive weight
\(k\) is contracted by \(a_{-n}\), since
\[
 \rho(a_{-n})v=e^{-kn}v\longrightarrow0.
\]
A vector of negative weight is contracted by \(a_n\). Thus
\cref{prop:affineT} proves property~\(\rt\) for
\(V_g\rtimes\Sp_{2g}(\R)\).

We next verify that volume is preserved, so that the lattice
criterion in \cref{prop:affinelattice} applies.
We verify this using determinants. 
A symplectic transformation \(s\) of \(H_{g,\R}\)
preserves the nonzero top-degree form \(\omega^{\wedge g}\).
Consequently
\[
 \det(s|_{H_{g,\R}})=1.
\]
For an invertible operator \(u\) on an \(n\)-dimensional vector
space,
\[
 \det(\Lambda^3u)=(\det u)^{\binom{n-1}{2}}.
\]
Indeed, after triangularizing over \(\C\), the diagonal entries
of \(\Lambda^3u\) are the products of triples of distinct
diagonal entries of \(u\). Each entry occurs in exactly
\(\binom{n-1}{2}\) such products. Applying this with \(n=2g\)
gives \(\det(\Lambda^3s)=1\).

The map \(j:H_{g,\R}\to\Lambda^3H_{g,\R}\) is injective and
equivariant, so the action on its image also has determinant
one. In a basis beginning with a basis of this invariant
subspace, the action on \(\Lambda^3H_{g,\R}\) is block
triangular. Its determinant is the product of the determinants
on the subspace and on the quotient. Since that quotient is
\(V_g\), we obtain
\[
 \det\rho(s)
 =\frac{\det(\Lambda^3s)}
        {\det((\Lambda^3s)|_{j(H_{g,\R})})}
 =1.
\]
Thus any Lebesgue measure \(\lambda\) on \(V_g\) satisfies
\[
 \lambda(\rho(s)C)
 =|\det\rho(s)|\,\lambda(C)=\lambda(C)
\]
for every Borel set \(C\subset V_g\). This equality is the required condition, needed in \cref{prop:affinelattice}.

The group \(\Sp_{2g}(\R)\) is unimodular. This can also be seen
by a determinant calculation: its adjoint representation on
the symplectic Lie algebra identifies with the action on
\(\operatorname{Sym}^2H_{g,\R}\), and
\[
 \det(\operatorname{Sym}^2s)=(\det s)^{2g+1}=1.
\]
Right translation by \(s\) multiplies a left-invariant volume
form by \(\det\Ad(s)^{-1}\); hence a left Haar measure
\(\nu\) on \(\Sp_{2g}(\R)\) is also right invariant.

For completeness, the finite-volume domain of
\cref{prop:affinelattice} has a concrete form here. Choose a
half-open parallelepiped \(P\) for \(A_g\) in \(V_g\), normalize
Lebesgue measure by \(\lambda(P)=1\), and choose a Borel
fundamental domain \(\mathcal D\) for the right action of
\(\Sp_{2g}(\Z)\) on \(\Sp_{2g}(\R)\), with
\(0<\nu(\mathcal D)<\infty\).
Then
\[
 \mathcal F=\{(\rho(s)p,s):p\in P,\ s\in\mathcal D\}
\]
is a fundamental domain for the right action of
\(A_g\rtimes\Sp_{2g}(\Z)\) on
\(V_g\rtimes\Sp_{2g}(\R)\). Each fiber
\(\rho(s)P\) has volume one, so Tonelli's theorem gives
\[
 (\lambda\times\nu)(\mathcal F)
 =\int_{\mathcal D}\lambda(\rho(s)P)\,d\nu(s)
 =\nu(\mathcal D)<\infty.
\]
The restriction of product Haar measure to \(\mathcal F\)
defines the invariant quotient measure by the standard
quotient-measure construction used in
\cref{prop:affinelattice}. Thus \(A_g\rtimes\Sp_{2g}(\Z)\) is a
lattice in the real affine group and inherits property~\(\rt\).
\end{proof}

\subsection{The first Torelli quotient}

Let
\[
 D^2\mathcal{T}_g=\{i\in\mathcal{T}_g:i^m\in[\mathcal{T}_g,\mathcal{T}_g]
                              \text{ for some }m\ge1\}.
\]
It is the inverse image of \(\tor(\mathcal{T}_g/[\mathcal{T}_g,\mathcal{T}_g])\) under \(\mathcal{T}_g\to\mathcal{T}_g/[\mathcal{T}_g,\mathcal{T}_g]\),
and hence is characteristic in \(\mathcal{T}_g\). Define
\[
 E_g=\overline{\mathcal{Q}}_{g,1}=\Mod(\Sigma_g)/D^2\mathcal{T}_g=\mathcal{Q}_{g,1}/\tor(\mathcal{T}_g/[\mathcal{T}_g,\mathcal{T}_g]).
\]
There is an exact sequence
\begin{equation}\label{eq:torsionfreeabelianextension}
 1\longrightarrow A_g\longrightarrow E_g
 \xrightarrow{p}\Sp_{2g}(\Z)\longrightarrow1.
\end{equation}

We now consider an affine realization of \(E_g\) proved by Hain and Matsumoto.
The next statement is the closed-surface case of
\cite[Lemma~6.3]{HainMatsumoto}, and also the specialization of
\cref{prop:centralaffine} to \(z=-I\). We spell out the construction
in this case to identify the translation lattice and its index.

\begin{proposition}[The Hain--Matsumoto affine realization]
\label{prop:affinequotient}
There is an injective homomorphism
\[
 \phi:E_g\longrightarrow A_g\rtimes\Sp_{2g}(\Z)
\]
which induces the identity on the symplectic quotient and
multiplication by \(2\) on \(A_g\). Its image has index
\(2^{d_g}\). In particular, \(E_g\) is linear over \(\Z\)
and has property~\(\rt\).
\end{proposition}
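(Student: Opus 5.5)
The plan is to specialize \cref{prop:centralaffine} to the extension \eqref{eq:torsionfreeabelianextension} with \(z=-I\in Z(\Sp_{2g}(\Z))\), and then to invoke \cref{prop:arithmeticT}.

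First, I check the hypotheses. The group \(A_g\) is free abelian of rank \(d_g\) by \cref{lem:lattice}. The conjugation action of \(E_g\) on \(A_g\) factors through \(\Sp_{2g}(\Z)\), and by \cref{thm:johnsoninput} it is the action on \(\Lambda^3H_g/j(H_g)\). By \cref{lem:weights}, \(-I\) acts as \(-1\) on \(A_g\). Hence \(D=I-\rho(-I)=2I\) is invertible on \(V_g\).

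Next, I make the construction explicit. Choose a lift \(u\in E_g\) of \(-I\), for instance the image of the hyperelliptic involution; any lift works. Define \(d(x)=[x,u]\in A_g\) and \(\phi(x)=(d(x),p(x))\). The commutator identity in the proof of \cref{prop:centralaffine} shows that \(d\) is a crossed homomorphism. On \(A_g\) it is \(a\mapsto a-\rho(-I)a=2a\). Therefore \(\phi\) is an injective homomorphism into \(A_g\rtimes\Sp_{2g}(\Z)\) that lies over the identity of \(\Sp_{2g}(\Z)\) and is multiplication by \(2\) on \(A_g\).

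For the index, I use the coset argument of \cref{prop:centralaffine}. Every coset has a representative in \(A_g\), and two such representatives agree exactly modulo \(2A_g\). The index is therefore \([A_g:2A_g]=2^{d_g}\).

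The remaining consequences follow quickly. For linearity over \(\Z\), the group \(A_g\rtimes\Sp_{2g}(\Z)\) embeds in \(\GL_{d_g+1}(\Z)\) by affine matrices
\(\begin{pmatrix}\rho(\gamma)&a\\0&1\end{pmatrix}\),
where \(\rho(\gamma)\) is written in an integral basis of \(A_g\). Composing with \(\phi\) shows that \(E_g\) is linear over \(\Z\).

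For property~\(\rt\), \cref{prop:arithmeticT} gives property~\(\rt\) for \(A_g\rtimes\Sp_{2g}(\Z)\). Since \(\phi(E_g)\) is a subgroup of finite index \(2^{d_g}\) in it, finite-index permanence transfers property~\(\rt\) to \(E_g\).

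There is no serious obstacle. The only point needing care is that the action of \(-I\) on the integral module \(A_g\), and not merely on \(V_g\), is \(-1\). This holds because \(\Lambda^3\) has odd degree and the Johnson isomorphism is equivariant, which is already recorded in \cref{lem:weights}.
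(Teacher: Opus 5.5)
Your construction of \(\phi\), the computation \(d|_{A_g}=2\), the injectivity and index argument, and the deduction of property~\(\rt\) from \cref{prop:arithmeticT} plus finite-index permanence match the paper's proof exactly. The one unjustified step is the linearity claim.

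The map \((a,\gamma)\mapsto\begin{pmatrix}\rho(\gamma)&a\\0&1\end{pmatrix}\) is a homomorphism \(A_g\rtimes\Sp_{2g}(\Z)\to\GL_{d_g+1}(\Z)\), but its kernel is \(\{(0,\gamma):\rho(\gamma)=I\}\). Calling it an embedding therefore assumes that \(\Sp_{2g}(\Z)\) acts faithfully on \(A_g\), which you neither state nor prove. The claim is true, but it needs input you have not supplied:
\begin{enumerate}[label=\textup{(\alph*)}]
 \item \(\ker\rho\) is a normal subgroup of \(\Sp_{2g}(\R)\);
 \item it is proper, because the nonzero weights \(\pm1,\pm3\) of \cref{lem:weights} show that \(\rho\) is nontrivial;
 \item it therefore lies in the center \(\{\pm I\}\), by simplicity of \(\mathrm{PSp}_{2g}(\R)\);
 \item \(-I\) acts as \(-1\ne1\) on the nonzero space \(V_g\), so \(\ker\rho\) is trivial.
\end{enumerate}

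The paper avoids this issue by adjoining a copy of the standard representation. The map
\[
 (a,S)\longmapsto\begin{pmatrix}r_g(S)&a&0\\0&1&0\\0&0&S\end{pmatrix}\in\GL_{d_g+1+2g}(\Z)
\]
is injective because the lower block recovers \(S\) and the middle column recovers \(a\). No faithfulness of \(r_g\) is needed. Either add that block or insert the faithfulness argument above; the rest of your proof stands.
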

\begin{proof}
Write \(E=E_g\) and \(A=A_g\).
Choose a lift \(u\in E\) of the central element \(z=-I\)
of \(\Sp_{2g}(\Z)\). Its action on \(A\) is multiplication by
\(-1\), by \cref{lem:weights}. For \(x\in E\) let
\[
 b(x)=[x,u]=xux^{-1}u^{-1}.
\]
Since \(z\) is central in \(\Sp_{2g}(\Z)\), this commutator lies
in \(A\). We write the law of \(A\) additively. The identity
\[
 [xy,u]=x[y,u]x^{-1}[x,u]
\]
and commutativity in \(A\) imply
\[
 b(xy)=b(x)+p(x)b(y).
\]
For \(a\in A\), the action of \(u\) gives
\(b(a)=a-za=2a\). Therefore
\(\phi(x)=(b(x),p(x))\) is a homomorphism to the stated
semidirect product. If it kills \(x\), then \(p(x)=1\),
so \(x=a\in A\), and \(2a=0\). Since \(A\) is torsion-free,
we obtain \(a=0\), which proves injectivity.

The subgroup \(\phi(E_g)\) surjects onto \(\Sp_{2g}(\Z)\) under the second-coordinate projection, and its intersection
with the translation subgroup is \(2A\). Every coset of the
image in \(A\rtimes\Sp_{2g}(\Z)\) therefore has a representative
in \(A\); two such representatives determine the same coset
exactly when they differ by \(2A\). The index is
\([A:2A]=2^{d_g}\).

Choose a basis of \(A\), and
let \[r_g:\Sp_{2g}(\Z)\to\GL_{d_g}(\Z)\] 
be the action. The map
\[
 (a,S)\longmapsto
 \begin{pmatrix}
  r_g(S)&a&0\\
  0&1&0\\
  0&0&S
 \end{pmatrix}
 \quad\in\GL_{d_g+1+2g}(\Z)
\]
is an injective homomorphism. The lower block recovers \(S\),
and the translation column recovers \(a\), without requiring
faithfulness of \(r_g\). Finally, \cref{prop:arithmeticT}
and finite-index permanence give property~\(\rt\) of \(E\).
\end{proof}

\begin{remark}
An extension by the module \(A\) is described by a class
\(e\in H^2(\Sp_{2g}(\Z);A)\). Choose a set-theoretic
section \(s:\Sp_{2g}(\Z)\to E\) with \(s(1)=1\), and write
\[
 s(\gamma)s(\eta)=\alpha(\gamma,\eta)s(\gamma\eta),
 \qquad \alpha(\gamma,\eta)\in A.
\]
Associativity implies that \(\alpha\) is a \(2\)-cocycle;
changing \(s\) changes it by a coboundary. Put
\(v(\gamma)=b(s(\gamma))\). Applying the crossed-homomorphism
identity and \(b|_A=2\) to the displayed product gives
\[
 2\alpha(\gamma,\eta)
 =v(\gamma)+\gamma v(\eta)-v(\gamma\eta).
\]
Thus \(2e=0\). Pushing out along \(A\xrightarrow{\times2}A\)
makes the extension split, yet it does not show that \(e=0\), and
does not give a section of \eqref{eq:torsionfreeabelianextension}.
The embedding of \cref{prop:affinequotient} corresponds to this
enlargement of the normal lattice.
\end{remark}

\begin{corollary}\label{cor:firststage}
The group \(\mathcal{Q}_{g,1}=\Mod(\Sigma_g)/[\mathcal{T}_g,\mathcal{T}_g]\) has property~\(\rt\)
for \(g\ge3\).
\end{corollary}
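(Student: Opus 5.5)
The plan is to pass from \(E_g\) back to \(\mathcal{Q}_{g,1}\) through a finite normal kernel. By \cref{prop:affinequotient}, the group \(E_g=\mathcal{Q}_{g,1}/\tor(\mathcal{T}_g/[\mathcal{T}_g,\mathcal{T}_g])\) embeds with finite index \(2^{d_g}\) into \(A_g\rtimes\Sp_{2g}(\Z)\). The latter has property~\(\rt\) by \cref{prop:arithmeticT}, so finite-index permanence gives property~\(\rt\) for \(E_g\). This has already been recorded in \cref{prop:affinequotient}.

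Next I check that the kernel of the natural surjection \(\mathcal{Q}_{g,1}\to E_g\) is finite and normal. The kernel is \(D^2\mathcal{T}_g/[\mathcal{T}_g,\mathcal{T}_g]=\tor(\mathcal{T}_g/[\mathcal{T}_g,\mathcal{T}_g])\). Finite generation of \(\mathcal{T}_g\) for \(g\ge3\) (Johnson, as used in \cref{lem:nilab}) makes the abelianization a finitely generated abelian group, so its torsion subgroup is finite. The kernel is normal because \(D^2\mathcal{T}_g\) is characteristic in the normal subgroup \(\mathcal{T}_g\), hence normal in \(\Mod(\Sigma_g)\).

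Permanence of property~\(\rt\) under finite normal kernels, \cite[Propositions~1.1.5 and~1.7.6]{BHV}, then shows that \(\mathcal{Q}_{g,1}\) has property~\(\rt\).

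As a cross-check, the result also follows directly from \cref{thm:generalcriterion} with \(G=\Mod(\Sigma_g)\), \(K=\mathcal{T}_g\) and \(S=\Sp_{2g}(\R)\). Hypotheses (i)--(iii) are verified in \cref{prop:arithmeticT} and \cref{lem:weights}, and (iv) holds via the central element \(-I\) acting as \(-1\). I expect no step to be a real obstacle. The only point needing care is finiteness of the torsion kernel, which depends on finite generation of \(\mathcal{T}_g\) and hence on \(g\ge3\).
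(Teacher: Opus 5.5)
Your proposal is correct and follows the paper's proof: property~\(\rt\) for \(E_g\) comes from \cref{prop:affinequotient}, and it passes to \(\mathcal{Q}_{g,1}\) through the finite normal kernel \(\tor(\mathcal{T}_g/[\mathcal{T}_g,\mathcal{T}_g])\). Your explicit checks of finiteness (via Johnson's finite generation of \(\mathcal{T}_g\)) and of normality (via \(D^2\mathcal{T}_g\) being characteristic in \(\mathcal{T}_g\)) supply details the paper leaves implicit.
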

\begin{proof}
The kernel \(\tor(\mathcal{T}_g/[\mathcal{T}_g,\mathcal{T}_g])\) of \(\mathcal{Q}_{g,1}\to E_g\) is finite.
Since \(E_g\) has property~\(\rt\) by \cref{prop:affinequotient},
permanence under finite normal kernels gives the conclusion.
\end{proof}

\subsection{Higher Torelli quotients and proof of Theorem B}

We can now prove the mapping class group application.

\begin{proof}[Proof of \cref{thm:main}]
By \cref{lem:nilab}, \(\mathcal{N}_{g,c}\) is nilpotent and normal
in \(\mathcal{Q}_{g,c}\), and
\[
 \mathcal{Q}_{g,c}/[\mathcal{N}_{g,c},\mathcal{N}_{g,c}]
 \cong \mathcal{Q}_{g,1}.
\]
The latter has property~\(\rt\) by \cref{cor:firststage}.
The nilpotent permanence theorem, \cref{thm:nilpotentpermanence},
therefore gives property~\(\rt\) of \(\mathcal{Q}_{g,c}\).
Finite generation is obvious.
By \cref{lem:nilab,lem:lattice}, the abelianization of \(\mathcal{N}_{g,c}\)
surjects onto the nonzero free abelian group \(A_g\), of rank
\(d_g>0\). Thus \(\mathcal{N}_{g,c}\), and hence \(\mathcal{Q}_{g,c}\), is infinite.
\end{proof}

Note that
\(\mathcal{N}_{g,c}\) itself does not have property~\(\rt\), since it
surjects onto \(\Z^{d_g}\). 

\begin{corollary}[Quotients with nilpotent Torelli image]
\label{cor:nilpotentimage}
Let \(f:\Mod(\Sigma_g)\to P\), with \(g\ge3\), be a
surjective homomorphism. If \(f(\mathcal{T}_g)\) is nilpotent of finite
class, then \(P\) has property~\(\rt\).
\end{corollary}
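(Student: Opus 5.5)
The plan is to reduce to the general tower statement, \cref{thm:quotienttower}, applied with \(G=\Mod(\Sigma_g)\) and \(K=\mathcal{T}_g\). That theorem says that once \(G/[K,K]\) has property~\(\rt\), every quotient of \(G\) in which the image of \(K\) is nilpotent also has property~\(\rt\). So the proof should be short.

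The steps, in order:

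\begin{enumerate}
\item Fix the surjection \(f:\Mod(\Sigma_g)\to P\) and set \(N=f(\mathcal{T}_g)\). Since \(\mathcal{T}_g\) is normal and \(f\) is surjective, \(N\) is normal in \(P\). By hypothesis \(N\) is nilpotent of some class \(c\).
\item Note that \(f([\mathcal{T}_g,\mathcal{T}_g])=[N,N]\). Hence \(f\) induces a surjection
\[
 \mathcal{Q}_{g,1}=\Mod(\Sigma_g)/[\mathcal{T}_g,\mathcal{T}_g]\longrightarrow P/[N,N].
\]
\item By \cref{cor:firststage}, \(\mathcal{Q}_{g,1}\) has property~\(\rt\). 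Quotient permanence then gives property~\(\rt\) for \(P/[N,N]\).
\item Apply \cref{thm:nilpotentpermanence} to the nilpotent normal subgroup \(N\subseteq P\). This yields property~\(\rt\) for \(P\).
\end{enumerate}

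This is exactly the ``more generally'' clause of the proof of \cref{thm:quotienttower}, so one could also simply cite that clause.

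An alternative route avoids re-checking that argument. Since \(N_{[c+1]}=1\) and \(f((\mathcal{T}_g)_{[c+1]})=N_{[c+1]}\), the map \(f\) factors through \(\mathcal{Q}_{g,c}\). Then \(P\) is a quotient of \(\mathcal{Q}_{g,c}\), which has property~\(\rt\) by \cref{thm:main}, and quotient permanence finishes the proof.

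I expect no real obstacle. The only points to confirm are that \(P\) is countable and discrete, which holds because it is a quotient of the finitely generated group \(\Mod(\Sigma_g)\), and that ``nilpotent of finite class'' is precisely what makes \(N_{[c+1]}\) trivial for some \(c\). The factorization argument is the cleanest, and I would present that one.
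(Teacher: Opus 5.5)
Your preferred factorization argument is the paper's proof. Since \(f\bigl((\mathcal{T}_g)_{[c+1]}\bigr)=\bigl(f(\mathcal{T}_g)\bigr)_{[c+1]}=1\), the map \(f\) factors through \(\mathcal{Q}_{g,c}\), which has property~\(\rt\) by \cref{thm:main}, and quotient permanence finishes. The only cosmetic point is to take \(c\ge1\), since \(\mathcal{Q}_{g,c}\) is only defined for \(c\ge1\); the paper notes separately that the trivial-image case factors through \(\Sp_{2g}(\Z)\).
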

\begin{proof}
If the class is at most \(c\ge1\), then
\(f\bigl((\mathcal{T}_g)_{[c+1]}\bigr)
=\bigl(f(\mathcal{T}_g)\bigr)_{[c+1]}=1\).
Thus \(f\) factors through \(\mathcal{Q}_{g,c}\), and quotient
permanence applies. The trivial-image case factors through the
symplectic quotient and is covered as well.
\end{proof}

\begin{remark}
For \(g\ge3\) and \(c\ge1\), the torsion subgroup of
\(\mathcal{N}_{g,c}\) is finite and characteristic
\cite[Chapter~II]{Raghunathan}, hence normal in \(\mathcal{Q}_{g,c}\).
Thus \(\mathcal{Q}_{g,c}/\tor(\mathcal{N}_{g,c})\) also has
property~\(\rt\), with torsion-free nilpotent Torelli image.
This is the quotient obtained by replacing \((\mathcal{T}_g)_{[c+1]}\)
by its rational lower-central analogue: the elements of \(\mathcal{T}_g\)
having a positive power in \((\mathcal{T}_g)_{[c+1]}\).
\end{remark}

There are natural surjections
\[
 \cdots\longrightarrow \mathcal{Q}_{g,3}\longrightarrow \mathcal{Q}_{g,2}
 \longrightarrow \mathcal{Q}_{g,1}\longrightarrow\Sp_{2g}(\Z).
\]
Increasing \(c\) retains more of the Torelli group. The theorem
applies separately to each finite nilpotent stage.

\subsection{An obstruction for genus 2}
\label{sec:genustwo}

The hypothesis \(g\ge3\) in \cref{thm:main} cannot be removed.
In genus \(g=2\), every quotient in the corresponding family has a
finite-index subgroup with infinite abelianization. We use the same
notation
\[
 \mathcal{Q}_{2,c}=\Mod(\Sigma_2)/(\mathcal{T}_2)_{[c+1]},\qquad
 \mathcal{N}_{2,c}=\mathcal{T}_2/(\mathcal{T}_2)_{[c+1]},\qquad c\ge1.
\]
The obstruction comes from a theorem of McCarthy concerning the
first cohomology of level subgroups.

\begin{proposition}\label{prop:genustwo}
For every integer \(c\ge1\), the group \(\mathcal{Q}_{2,c}\) has a
finite-index subgroup admitting a surjective homomorphism onto
\(\Z\). In particular, \(\mathcal{Q}_{2,c}\) does not have
property~\(\rt\).
\end{proposition}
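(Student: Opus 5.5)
The plan is to use McCarthy's theorem. It gives a finite-index subgroup \(\Gamma'\subseteq\Mod(\Sigma_2)\) containing \(\mathcal{T}_2\) with infinite abelianization, that is, a surjection \(\Gamma'\to\Z\). The natural candidate is the preimage of a level subgroup \(\Sp_4(\Z)[m]\), or a suitable finite-index subgroup containing the Torelli group, for which McCarthy showed \(H^1(\Gamma';\Q)\neq0\). Then \(\mathcal{Q}_{2,c}\) contains the finite-index subgroup \(\Gamma'/(\mathcal{T}_2)_{[c+1]}\). I will show that a homomorphism \(\Gamma'\to\Z\) factors through this quotient, and that its image is infinite after possibly modifying it.

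The central point is the behaviour of the homomorphism \(f:\Gamma'\to\Z\) on \(\mathcal{T}_2\). If \(f\) kills \(\mathcal{T}_2\), then it factors through \(\Gamma'/\mathcal{T}_2\), a finite-index subgroup of \(\Sp_4(\Z)\). That group has property~\(\rt\), so \(f\) would be trivial. Hence McCarthy's nonzero class must be nontrivial on Torelli. This alone does not yet give factorization through \((\mathcal{T}_2)_{[c+1]}\): the restriction \(f|_{\mathcal{T}_2}\) factors through \(\mathcal{T}_2/[\mathcal{T}_2,\mathcal{T}_2]\), but \(f\) itself need not vanish on \([\mathcal{T}_2,\mathcal{T}_2]\) a priori. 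It does, however, because \([\mathcal{T}_2,\mathcal{T}_2]\subseteq[\Gamma',\Gamma']\subseteq\ker f\). Since \((\mathcal{T}_2)_{[c+1]}\subseteq[\mathcal{T}_2,\mathcal{T}_2]\) for \(c\ge1\), \(f\) descends to \(\Gamma'/(\mathcal{T}_2)_{[c+1]}\). The descended map has the same image as \(f\), which is all of \(\Z\). So the subgroup \(\Gamma'/(\mathcal{T}_2)_{[c+1]}\) surjects onto \(\Z\).

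It remains to confirm that this subgroup has finite index in \(\mathcal{Q}_{2,c}\). This holds because its index equals \([\Mod(\Sigma_2):\Gamma']<\infty\), as \(\Gamma'\supseteq\mathcal{T}_2\supseteq(\mathcal{T}_2)_{[c+1]}\). Property~\(\rt\) then fails by finite-index permanence, since a group surjecting onto \(\Z\) does not have \(\rt\).

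The main obstacle is the precise input. I need McCarthy's result in the form that some finite-index subgroup of \(\Mod(\Sigma_2)\) containing \(\mathcal{T}_2\) has infinite abelianization. McCarthy's theorem for genus \(2\) is stated for finite-index subgroups, such as the kernel of \(\Mod(\Sigma_2)\to\Sp_4(\Z/m)\) or the kernel of a map to a finite group. If the available subgroup \(\Gamma_0\) does not contain \(\mathcal{T}_2\), I would first try replacing it by \(\Gamma_0\mathcal{T}_2\). That replacement can destroy the nonvanishing of \(H^1\), so I would instead pick the subgroup to contain Torelli from the start. One natural route is to pull back along the Torelli structure in genus \(2\): \(\mathcal{T}_2\) is free on separating twists (Mess), and a finite-index subgroup of \(\Sp_4(\Z)\) acts on \(H_1(\mathcal{T}_2)\) with finitely many orbit types. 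Invariants of this action can produce a nonzero homomorphism. I would rely on McCarthy's statement as quoted in the paper for exactly this.
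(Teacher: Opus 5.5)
Your proposal is correct and follows the paper's proof essentially verbatim. Take a finite-index subgroup \(\Gamma\supseteq\mathcal{T}_2\) with a surjection \(\chi:\Gamma\to\Z\) from McCarthy's theorem, use \((\mathcal{T}_2)_{[c+1]}\subseteq[\mathcal{T}_2,\mathcal{T}_2]\subseteq[\Gamma,\Gamma]\subseteq\ker\chi\), and descend to the finite-index subgroup \(\Gamma/(\mathcal{T}_2)_{[c+1]}\) of \(\mathcal{Q}_{2,c}\). The paper takes \(\Gamma=\Mod(\Sigma_2)[2]\), which contains \(\mathcal{T}_2\) automatically, so your worry about subgroups not containing Torelli and the detour through property \(\rt\) of \(\Sp_4(\Z)\) are unnecessary.
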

\begin{proof}
Let
\[
 \Gamma=\Mod(\Sigma_2)[2]
 =\ker\bigl(\Mod(\Sigma_2)\longrightarrow\Sp_4(\Z/2\Z)\bigr)
\]
be the level-two mapping class group. It has finite index in
\(\Mod(\Sigma_2)\) and contains \(\mathcal{T}_2\), since the Torelli group acts
trivially on integral first homology and hence on homology modulo
two. McCarthy's theorem \cite[Theorem~B; Theorem~2.1]{McCarthy}
gives
\[
 H^1(\Gamma;\Z)\ne0.
\]
Here the coefficients are trivial, so
\(H^1(\Gamma;\Z)=\Hom(\Gamma,\Z)\). Choose a nonzero
homomorphism \(\chi:\Gamma\to\Z\). Its image is an infinite
cyclic subgroup of \(\Z\). Identifying that image with \(\Z\),
we may assume that \(\chi\) is surjective.

Put \(K_c=(\mathcal{T}_2)_{[c+1]}\). Since \(c\ge1\), we have
\[
 K_c\subseteq[\mathcal{T}_2,\mathcal{T}_2]
     \subseteq[\Gamma,\Gamma]
     \subseteq\ker\chi.
\]
Thus \(\chi\) induces a surjection
\[
 \bar\chi:\Gamma/K_c\longrightarrow\Z.
\]
The subgroup \(K_c\) is normal in \(\Mod(\Sigma_2)\) and contained in
\(\Gamma\). Consequently \(\Gamma/K_c\) is a subgroup of
\(\mathcal{Q}_{2,c}\), and
\[
 [\mathcal{Q}_{2,c}:\Gamma/K_c]=[\Mod(\Sigma_2):\Gamma]<\infty.
\]
If \(\mathcal{Q}_{2,c}\) had property~\(\rt\), then its finite-index
subgroup \(\Gamma/K_c\), and hence its quotient \(\Z\), would
also have property~\(\rt\), which is impossible.
\end{proof}

The argument shows why passing to any of these nilpotent Torelli
quotients leaves the obstruction intact. Since
\(K_c\subseteq[\Gamma,\Gamma]\), the quotient map induces an
isomorphism
\[
 (\Gamma/K_c)_{\ab}\cong\Gamma_{\ab}.
\]
Thus the character supplied by McCarthy survives at every stage.
No finite-generation assumption on \(\mathcal{T}_2\) is needed.

\begin{corollary}\label{cor:genustworelative}
For every \(c\ge1\), the pair \((\mathcal{Q}_{2,c},\mathcal{N}_{2,c})\) does not
have relative property~\(\rt\).
\end{corollary}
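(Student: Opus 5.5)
We argue by contradiction, using \cref{lem:normalextension} with \(G=\mathcal{Q}_{2,c}\) and \(N=\mathcal{N}_{2,c}\). Suppose the pair \((\mathcal{Q}_{2,c},\mathcal{N}_{2,c})\) had relative property~\(\rt\). The quotient \(\mathcal{Q}_{2,c}/\mathcal{N}_{2,c}\) is \(\Mod(\Sigma_2)/\mathcal{T}_2\cong\Sp_4(\Z)\). This is a lattice in the higher-rank group \(\Sp_4(\R)\), which has property~\(\rt\), so \(\Sp_4(\Z)\) has property~\(\rt\) by \cite[Theorem~1.7.1]{BHV}. Both conditions in \cref{lem:normalextension}(ii) would then hold, so \(\mathcal{Q}_{2,c}\) itself would have property~\(\rt\). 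This contradicts \cref{prop:genustwo}.

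The only inputs are therefore three facts: the surjectivity of \(\sigma\) in genus two, so that the quotient really is \(\Sp_4(\Z)\); property~\(\rt\) for \(\Sp_4(\R)\), which has real rank \(2\); and lattice inheritance. None of these requires finite generation of \(\mathcal{T}_2\), and \cref{lem:normalextension} is stated for arbitrary countable groups. Countability holds because \(\Mod(\Sigma_2)\) is finitely generated.

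There is no real obstacle here. The one point to check is that the relevant quotient is exactly \(\Sp_4(\Z)\) and not merely a quotient of it. This holds because \(\mathcal{N}_{2,c}=\mathcal{T}_2/(\mathcal{T}_2)_{[c+1]}\) and \((\mathcal{T}_2)_{[c+1]}\subseteq\mathcal{T}_2\), so \(\mathcal{Q}_{2,c}/\mathcal{N}_{2,c}\cong\Mod(\Sigma_2)/\mathcal{T}_2\).

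As a sanity check, we can also see the failure of relative property~\(\rt\) directly. Let \(\Gamma\) and \(\chi\) be as in the proof of \cref{prop:genustwo}. Induce to \(\mathcal{Q}_{2,c}\) the almost invariant vectors of the unitary characters \(\Z\to S^1\), \(n\mapsto e^{i\theta n}\), pulled back along \(\bar\chi\), and let \(\theta\to0\). The reduction above via \cref{lem:normalextension} is shorter, and it is the route I would write.
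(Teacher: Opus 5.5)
Your argument is correct and is exactly the paper's proof: relative property~\(\rt\) for the pair, together with property~\(\rt\) of \(\mathcal{Q}_{2,c}/\mathcal{N}_{2,c}\cong\Sp_4(\Z)\) and \cref{lem:normalextension}, would give property~\(\rt\) for \(\mathcal{Q}_{2,c}\), contradicting \cref{prop:genustwo}. Your closing ``sanity check'' is not needed. If you kept it, you would have to take a direct sum of induced representations over a sequence \(\theta_k\to0\) with \(\theta_k/\pi\) irrational, rather than a single limit. You would also have to note that \(\chi\) is nontrivial on \(\mathcal{T}_2\), since otherwise it would factor through \(\Sp_4(\Z)[2]\), so that no \(\mathcal{N}_{2,c}\)-invariant vectors appear.
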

\begin{proof}
The subgroup \(\mathcal{N}_{2,c}\) is normal in \(\mathcal{Q}_{2,c}\), and
\[
 \mathcal{Q}_{2,c}/\mathcal{N}_{2,c}\cong\Sp_4(\Z).
\]
The latter group has property~\(\rt\). If the pair had relative
property~\(\rt\), then \cref{lem:normalextension} would give
property~\(\rt\) for \(\mathcal{Q}_{2,c}\), contrary to
\cref{prop:genustwo}.
\end{proof}

The symplectic quotient \(\Mod(\Sigma_2)/\mathcal{T}_2\cong\Sp_4(\Z)\) therefore
has property~\(\rt\), whereas none of the quotients
\(\mathcal{Q}_{2,c}\), \(c\ge1\), does. This establishes the necessity
of the genus assumption for the property~\(\rt\) theorem.

\section{Automorphisms of free groups}
\label{sec:freeautomorphisms}

We now apply the general criterion to the homological representation
of a free-group automorphism group. This application is independent
of the mapping class group calculation: the integral module, the
contracting diagonal subgroup and the arithmetic ambient group are
different. The same argument nevertheless proves property~\(\rt\)
for every quotient in which the kernel of the homological action
has nilpotent image. We consider general rank \(n\ge3\) in the
preliminary calculations and specialize to \(n=3\) in the proof
of \cref{thm:autquotients}.

\subsection{The homological representation and its kernel}

Let \(F_n=\langle x_1,\ldots,x_n\rangle\) be the free group of
rank \(n\), and let
\[
 H_n=H_1(F_n;\Z)=F_n/(F_n)_{[2]}\cong\Z^n,
 \qquad H_n^*=\Hom(H_n,\Z).
\]
Write \(e_i=[x_i]\) for the standard basis of \(H_n\), and
\(e_i^*\) for its dual basis. Abelianization induces a surjection
\[
 q:\Aut(F_n)\longrightarrow\GL_n(\Z).
\]
Indeed, the automorphisms which permute the free generators, invert
one generator, or replace \(x_i\) by \(x_ix_j\), for \(i\ne j\),
induce the corresponding integral elementary operations. These
operations generate \(\GL_n(\Z)\). Define
\[
 \operatorname{IA}_n=\ker q,
 \qquad
 \operatorname{SAut}(F_n)=q^{-1}(\SL_n(\Z)).
\]
Thus \(\operatorname{IA}_n\) consists of automorphisms acting
trivially on first homology, and \(\operatorname{SAut}(F_n)\)
has index two in \(\Aut(F_n)\). The latter is a determinant
condition.

For \(c\ge1\), define
\[
 \mathcal{P}_{n,c}
   =\Aut(F_n)/(\operatorname{IA}_n)_{[c+1]}.
\]
The subgroup \((\operatorname{IA}_n)_{[c+1]}\) is characteristic
in \(\operatorname{IA}_n\), hence normal in \(\Aut(F_n)\).
Consequently there is an exact sequence
\begin{equation}\label{eq:autnilsequence}
 1\longrightarrow
 \operatorname{IA}_n/(\operatorname{IA}_n)_{[c+1]}
 \longrightarrow\mathcal{P}_{n,c}
 \longrightarrow\GL_n(\Z)\longrightarrow1.
\end{equation}
No splitting of this sequence is assumed.

We prove \cref{thm:autquotients} below, beginning with the integral
abelianization of \(\operatorname{IA}_n\).

\subsection{The first Johnson homomorphism}

The commutator convention \([x,y]=xyx^{-1}y^{-1}\) gives the
standard identification
\[
 (F_n)_{[2]}/(F_n)_{[3]}
   \cong\bigwedge\nolimits^2H_n,
 \qquad [x,y]\longmapsto[x]\wedge[y],
\]
where \([x]\) and \([y]\) denote the images of \(x\) and \(y\) in the abelianization and the commutator on the left denotes its class modulo
\((F_n)_{[3]}\). This identification follows from the
commutator identities modulo third-order commutators: the
commutator is bilinear and alternating on abelianization, and
the classes \([x_j,x_k]\), \(j<k\), form a free abelian basis.
See \cite[Sections~3 and~4.1]{SatohSurvey}.

If \(\phi\in\operatorname{IA}_n\), then
\(\phi(x)x^{-1}\in(F_n)_{[2]}\) for every \(x\in F_n\).
Define
\begin{equation}\label{eq:autjohnsondefinition}
 \tau(\phi)([x])
   =[\phi(x)x^{-1}]
   \in(F_n)_{[2]}/(F_n)_{[3]}
   =\bigwedge\nolimits^2H_n.
\end{equation}
We check the conditions needed to regard this as an integral
homomorphism.

First, for \(x,y\in F_n\),
\[
 \phi(xy)(xy)^{-1}
 =\bigl(\phi(x)x^{-1}\bigr)
    x\bigl(\phi(y)y^{-1}\bigr)x^{-1}.
\]
Conjugation by \(x\) is trivial on
\((F_n)_{[2]}/(F_n)_{[3]}\). Thus the class of this expression
is the sum of the classes for \(x\) and \(y\). It follows that
\(x\mapsto[\phi(x)x^{-1}]\) is a homomorphism from \(F_n\)
to an abelian group, and therefore factors through \(H_n\).
This also proves that \eqref{eq:autjohnsondefinition} is
independent of the representative of \([x]\).

Second, every element of \(\operatorname{IA}_n\) acts trivially
on \((F_n)_{[2]}/(F_n)_{[3]}\): under the exterior-square
identification its action is the exterior square of the identity
on \(H_n\). For \(\phi,\psi\in\operatorname{IA}_n\), the
identity
\[
 (\phi\psi)(x)x^{-1}
   =\phi\bigl(\psi(x)x^{-1}\bigr)
      \phi(x)x^{-1}
\]
therefore gives \[\tau(\phi\psi)=\tau(\phi)+\tau(\psi).\]
We obtain a homomorphism
\[
 \tau:\operatorname{IA}_n\longrightarrow W_n,
 \qquad
 W_n=\Hom\left(H_n,\bigwedge\nolimits^2H_n\right)
     =H_n^*\otimes\bigwedge\nolimits^2H_n.
\]
Finally, for \(\theta\in\Aut(F_n)\), direct substitution in
\eqref{eq:autjohnsondefinition} gives
\begin{equation}\label{eq:autjohnsonequivariance}
 \tau(\theta\phi\theta^{-1})
    =\left(\bigwedge\nolimits^2q(\theta)\right)\,
        \tau(\phi)\,q(\theta)^{-1}.
\end{equation}
Thus \(\tau\) is equivariant for the conjugation action and
the natural \(\GL_n(\Z)\)-action on \(W_n\).

\begin{proposition}[Integral abelianization of
\(\operatorname{IA}_n\)]\label{prop:iaabelianization}
For \(n\ge3\), the homomorphism \(\tau\) induces an isomorphism
of \(\GL_n(\Z)\)-modules
\[
 \operatorname{IA}_n/[\operatorname{IA}_n,\operatorname{IA}_n]
       \cong W_n.
\]
In particular, its kernel is
\([\operatorname{IA}_n,\operatorname{IA}_n]\), and the
abelianization has no torsion. Its rank is
\[
 d_n=n\binom n2=\frac{n^2(n-1)}2.
\]
\end{proposition}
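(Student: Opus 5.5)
The plan has three steps. First, show that \(\tau\) is surjective using explicit Magnus generators. Second, show that \(\operatorname{IA}_n/[\operatorname{IA}_n,\operatorname{IA}_n]\) is generated by at most \(d_n\) elements. Third, deduce the isomorphism from the fact that a surjection \(\Z^{r}\to\Z^{d_n}\) with \(r\le d_n\) is an isomorphism.

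For the first step, take Magnus' finite generating set of \(\operatorname{IA}_n\). It consists of the automorphisms
\[
K_{ij}:x_i\mapsto x_jx_ix_j^{-1},\qquad K_{ijk}:x_i\mapsto x_i[x_j,x_k],
\]
with \(i,j,k\) distinct and \(j<k\), each fixing the other generators. By \eqref{eq:autjohnsondefinition} we get
\[
\tau(K_{ij})([x_i])=[x_jx_ix_j^{-1}x_i^{-1}]=e_j\wedge e_i,
\]
and \(\tau(K_{ij})([x_l])=0\) for \(l\ne i\). Hence \(\tau(K_{ij})=e_i^*\otimes e_j\wedge e_i\). Similarly \(\tau(K_{ijk})=e_i^*\otimes e_j\wedge e_k\). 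Together these give every basis element \(e_i^*\otimes e_a\wedge e_b\) of \(W_n\), up to sign: when \(i\in\{a,b\}\) use \(K_{ij}\), and otherwise use \(K_{iab}\). This uses \(n\ge3\) only to make the indexing meaningful. So \(\tau\) is surjective onto the free abelian group \(W_n\) of rank \(n\binom n2\).

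For the second step, count Magnus generators. There are \(n(n-1)\) of type \(K_{ij}\) and \(n\binom{n-1}2\) of type \(K_{ijk}\). Their total is
\[
n(n-1)+n\binom{n-1}{2}=n\Bigl((n-1)+\tfrac{(n-1)(n-2)}2\Bigr)=n\binom n2=d_n.
\]
Their images therefore generate \(\operatorname{IA}_n^{\ab}\) by \(d_n\) elements. The induced map \(\bar\tau:\operatorname{IA}_n^{\ab}\to W_n\) is surjective, so composing with \(\Z^{d_n}\twoheadrightarrow\operatorname{IA}_n^{\ab}\) gives a surjection \(\Z^{d_n}\to\Z^{d_n}\). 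Such a surjection is an isomorphism, since surjective endomorphisms of finitely generated free abelian groups (indeed of Noetherian modules) are injective. Hence \(\Z^{d_n}\to\operatorname{IA}_n^{\ab}\) is injective as well, so \(\bar\tau\) is an isomorphism. It follows that \(\ker\tau=[\operatorname{IA}_n,\operatorname{IA}_n]\) and that the abelianization is torsion-free of rank \(d_n\). Equivariance under \(\GL_n(\Z)\) was already established in \eqref{eq:autjohnsonequivariance}, and the conjugation action on the abelianization factors through \(q\), so \(\bar\tau\) is an isomorphism of modules.

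The main obstacle is the input that Magnus' elements generate \(\operatorname{IA}_n\). This is a nontrivial classical theorem (Magnus 1935; see also Formanek–Procesi, Day–Putman, and \cite{SatohSurvey}), and I would cite it rather than reprove it. Everything else is the routine commutator computation of \(\tau\) on generators and the exact coincidence of the generator count with the rank of \(W_n\). I would also double-check the sign conventions in those computations, although signs do not affect surjectivity.
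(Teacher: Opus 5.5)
Your proof is correct and follows the paper's route: Magnus's generators are sent by \(\tau\) bijectively onto a signed basis of \(W_n\), and equivariance is read off from the conjugation formula for \(\tau\). The one difference is that the paper takes the kernel statement from the literature, while your count \(n(n-1)+n\binom{n-1}{2}=d_n\) proves injectivity directly. (Indeed, the composite \(\Z^{d_n}\to W_n\) sends the standard basis to a signed basis.) This makes your argument self-contained apart from Magnus's generation theorem.
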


This is the standard computation of the first Johnson
homomorphism and the abelianization of \(\operatorname{IA}_n\),
see \cite[Sections~4.2--4.3.1, especially equation~(4.3)]{SatohSurvey}
for the statement and its historical attributions. We recall how
the integral generators enter the computation. Magnus's theorem
\cite[Section~6]{Magnus1935} (see also
\cite[Theorem~4.1]{SatohSurvey}) states that
\(\operatorname{IA}_n\) is generated by
\[
 \begin{aligned}
 C_{ij}:&\quad x_i\longmapsto x_j^{-1}x_ix_j
       &&(i\ne j),\\
 M_{ijk}:&\quad x_i\longmapsto x_i[x_j,x_k]
       &&(i,j,k\text{ distinct},\ j<k),
 \end{aligned}
\]
with all other free generators fixed. In particular,
\(\operatorname{IA}_n\) is finitely generated. Their images
are
\[
 \tau(C_{ij})=e_i^*\otimes(e_i\wedge e_j),
 \qquad
 \tau(M_{ijk})=e_i^*\otimes(e_j\wedge e_k).
\]
Up to signs, these are exactly the basis vectors
\(e_i^*\otimes(e_j\wedge e_k)\), \(j<k\), of \(W_n\).
This explains surjectivity and the absence of a finite-index
ambiguity in the integral image. Equivariance is already given
by \eqref{eq:autjohnsonequivariance}; the assertion about the
kernel is the abelianization statement in
\cref{prop:iaabelianization}.

\subsection{The affine realization}

By \cref{prop:iaabelianization}, the first quotient fits into the short exact sequence
\[
 1\longrightarrow W_n\longrightarrow\mathcal{P}_{n,1}
   \longrightarrow\GL_n(\Z)\longrightarrow1.
\]
The central element \(-I\in\GL_n(\Z)\) acts as \(-1\) on
\(H_n^*\) and as \(+1\) on \(\bigwedge^2H_n\). Consequently
it acts as \(-1\) on \(W_n\). This remains true when \(n\)
is odd: in that case \(-I\) is outside \(\SL_n(\Z)\), but it
is still central in the full quotient \(\GL_n(\Z)\).

\begin{proposition}\label{prop:autaffinerealization}
There is an injective homomorphism
\[
 \mathcal{P}_{n,1}\longrightarrow W_n\rtimes\GL_n(\Z)
\]
whose image has index \(2^{d_n}\).
\end{proposition}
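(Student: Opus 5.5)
The plan is to specialize \cref{prop:centralaffine} to the extension
\[
 1\longrightarrow W_n\longrightarrow\mathcal{P}_{n,1}\xrightarrow{\ q\ }\GL_n(\Z)\longrightarrow1,
\]
with \(z=-I\). By \cref{prop:iaabelianization}, the kernel \(\operatorname{IA}_n/[\operatorname{IA}_n,\operatorname{IA}_n]\) is identified, \(\GL_n(\Z)\)-equivariantly, with the free abelian group \(W_n\) of rank \(d_n\). There is no torsion, so no finite kernel has to be removed, in contrast with the passage from \(\mathcal{Q}_{g,1}\) to \(E_g\) in the Torelli case. By \eqref{eq:autjohnsonequivariance}, the conjugation action of \(\mathcal{P}_{n,1}\) on this kernel is the natural action \(w\mapsto(\bigwedge^2 s)\,w\,s^{-1}\) of \(s=q(\theta)\).

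First I will check the hypothesis on \(z\). The matrix \(-I\) is central in \(\GL_n(\Z)\) for every \(n\), including odd \(n\) where it is not in \(\SL_n(\Z)\). It acts on \(H_n^*\otimes\bigwedge^2H_n\) by \((+1)\cdot(-1)=-1\): the factor \(\bigwedge^2(-I)\) is the identity, and precomposition with \((-I)^{-1}\) gives a sign. Hence \(D=I-\rho(-I)=2I\) is invertible on \(W_n\otimes\R\).

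Next, following the proof of \cref{prop:affinequotient}, I will choose a lift \(u\in\mathcal{P}_{n,1}\) of \(-I\). A concrete choice is the image of the automorphism inverting every free generator \(x_i\). I will then set \(b(x)=[x,u]\in W_n\). The commutator identity \([xy,u]=x[y,u]x^{-1}[x,u]\) gives the cocycle rule \(b(xy)=b(x)+q(x)\cdot b(y)\), and for \(a\in W_n\) we get \(b(a)=a-(-a)=2a\). Thus \(\phi(x)=(b(x),q(x))\) is a homomorphism into \(W_n\rtimes\GL_n(\Z)\). It is injective: an element of the kernel lies in \(W_n\) and satisfies \(2a=0\), and \(W_n\) is torsion-free.

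Finally, I will compute the index. The image of \(\phi\) surjects onto \(\GL_n(\Z)\), and its intersection with the translation subgroup is \(2W_n\). So every coset has a representative in \(W_n\), and two representatives agree exactly modulo \(2W_n\). This gives index \([W_n:2W_n]=2^{d_n}\), as in \cref{prop:centralaffine}.

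I do not expect a serious obstacle. The only points needing care are the equivariance and sign conventions in \eqref{eq:autjohnsonequivariance}, which establish that \(-I\) really acts by \(-1\), and the torsion-freeness of the abelianization of \(\operatorname{IA}_n\), which is supplied by \cref{prop:iaabelianization}.
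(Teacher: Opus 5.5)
Your proposal is correct and matches the paper's proof essentially step for step. The paper also applies \cref{prop:centralaffine} with \(z=-I\), so that \(D=2I\), and then spells out the commutator cocycle \([u,t]\) for a lift \(t\) of \(-I\), the identity \(w\mapsto 2w\) on translations, injectivity from torsion-freeness of \(W_n\), and the index \([W_n:2W_n]=2^{d_n}\). Your concrete choice of lift, the automorphism inverting every free generator, is a harmless extra detail.
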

\begin{proof}
Apply \cref{prop:centralaffine} to the central element
\(-I\), for which \(1-(-I)=2I\) is injective on \(W_n\)
and has cokernel of order \(2^{d_n}\).

More explicitly, choose a lift \(t\in\mathcal{P}_{n,1}\) of
\(-I\), and define \(b(u)=[u,t]\). Centrality in the quotient
implies that \(b(u)\in W_n\), even though \(t\) need not be
central in \(\mathcal{P}_{n,1}\). The commutator identity
\[
 [uv,t]=u[v,t]u^{-1}[u,t]
\]
and the fact that \(W_n\) is abelian give
\[b(uv)=b(u)+q(u)b(v).\] 
Thus
\(u\mapsto(b(u),q(u))\) is a homomorphism to the semidirect
product. For \(w\in W_n\), written additively,
\[
 b(w)=w-twt^{-1}=2w.
\]
The kernel of the homomorphism is therefore trivial. Its image
projects onto \(\GL_n(\Z)\), and its intersection with the
translation subgroup is exactly \(2W_n\). Every coset of the
image has a representative in the translation subgroup, so the
index is \([W_n:2W_n]=2^{d_n}\).
\end{proof}

\subsection{Contracting weights and the arithmetic lattice}

Let
\[
 V_n=W_n\otimes\R,
 \qquad
 S_n=\{s\in\GL_n(\R):|\det s|=1\}.
\]
The representation \(\rho:S_n\to\GL(V_n)\) is given, in the
Hom description, by
\[
 \rho(s)f=\left(\bigwedge\nolimits^2s\right)f s^{-1}.
\]
The identity component of \(S_n\) is \(\SL_n(\R)\), of index
two. Hence \(S_n\) has property~\(\rt\) for \(n\ge3\).
It is also unimodular. Indeed, its modular homomorphism is
trivial on the semisimple group \(\SL_n(\R)\), and the
resulting homomorphism from the quotient of order two to
\(\R_{>0}\) must be trivial.

\begin{lemma}\label{lem:autweights}
For every \(n\ge3\), there is a diagonal one-parameter subgroup
\((a_t)_{t\in\R}\) of \(\SL_n(\R)\) such that every basis
vector \(e_i^*\otimes(e_j\wedge e_k)\), \(j<k\), of \(V_n\)
has nonzero weight. In particular, each such vector is
contracted either by \(\rho(a_t)\) as \(t\to+\infty\), or
by \(\rho(a_{-t})\) as \(t\to+\infty\).
\end{lemma}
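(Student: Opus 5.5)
Take \(a_t=\operatorname{diag}(e^{\lambda_1t},\ldots,e^{\lambda_nt})\) with real exponents satisfying \(\sum_i\lambda_i=0\), so that \(a_t\in\SL_n(\R)\). Compute the action of \(\rho(a_t)\) on a basis vector directly from \(\rho(s)f=(\bigwedge^2s)fs^{-1}\). The dual vector \(e_i^*\), viewed as the functional precomposed with \(s^{-1}\), scales by \(e^{-\lambda_it}\), and \(e_j\wedge e_k\) scales by \(e^{(\lambda_j+\lambda_k)t}\). Hence \(e_i^*\otimes(e_j\wedge e_k)\) is a weight vector of weight
\[
 \lambda_j+\lambda_k-\lambda_i .
\]
The whole task is therefore to choose \(\lambda\) so that this linear form is nonzero for every admissible triple \((i,j,k)\) with \(j<k\). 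There are two kinds of triples: \(i\notin\{j,k\}\), which are the images of \(M_{ijk}\), and \(i\in\{j,k\}\), which are the images of \(C_{ij}\) up to sign.

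\textbf{Case \(i\in\{j,k\}\).} If, say, \(i=j\), the weight reduces to \(\lambda_k\). So it suffices that every \(\lambda_k\ne0\).

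\textbf{Case \(i,j,k\) distinct.} The weight is \(\lambda_j+\lambda_k-\lambda_i\). Each such condition excludes a hyperplane in the space \(\{\lambda:\sum\lambda_i=0\}\), and so does each condition \(\lambda_k\ne0\). The main point to check is that none of these linear forms vanishes identically on \(\{\sum\lambda_i=0\}\). A form restricted to this hyperplane vanishes identically only if it is proportional to \(\sum_i\lambda_i\).
\begin{itemize}
 \item \(\lambda_k\) is not proportional to \(\sum_i\lambda_i\), since \(n\ge2\).
 \item \(\lambda_j+\lambda_k-\lambda_i\) has coefficients of mixed sign, so it is not proportional to \(\sum_i\lambda_i\) either; this uses \(n\ge3\), so that three distinct indices exist.
\end{itemize}
Since finitely many proper hyperplanes cannot cover a real vector space, a generic \(\lambda\) works.

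For definiteness one could also exhibit an explicit choice, such as \(\lambda=(1,2,\ldots,n-1,-\binom n2)\) perturbed if necessary. The genericity argument is cleaner and avoids checking that particular choice, so I would use that.

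Finally, the contraction statement follows as in the symplectic case. A vector \(v\) of weight \(k\) satisfies \(\rho(a_t)v=e^{kt}v\). If \(k<0\) it tends to zero as \(t\to+\infty\), and if \(k>0\) the vector \(\rho(a_{-t})v\) tends to zero.

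The only real obstacle is ensuring non-vanishing of \(\lambda_j+\lambda_k-\lambda_i\) simultaneously for all triples under the trace-zero constraint. The hyperplane-avoidance argument handles it once one observes that no excluded form is a multiple of the trace.
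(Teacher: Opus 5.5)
Your proof is correct, and it takes a different route from the paper's. The paper fixes a specific direction: $\alpha_i=3^{i-1}$ for $i<n$ and $\alpha_n=-\sum_{i<n}3^{i-1}$. It then checks $-\alpha_i+\alpha_j+\alpha_k\neq0$ by cases:
\begin{itemize}
 \item a cancelled index leaves a single nonzero $\alpha$;
 \item three distinct indices below $n$ are handled by uniqueness of base-three expansions;
 \item when $n\in\{i,j,k\}$, a sign argument applies.
\end{itemize}
You instead observe that each weight, $\lambda_k$ or $\lambda_j+\lambda_k-\lambda_i$, is a linear form not proportional to the trace form. Its kernel is therefore a proper hyperplane of $\{\sum_i\lambda_i=0\}$, and finitely many such hyperplanes cannot cover this space. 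Your route avoids the case split and shows that the good directions form an open dense set. It applies unchanged to any finite-dimensional representation in which $0$ is not a weight of the diagonal torus. The paper's route gives concrete exponents, such as $(1,3,-4)$ for $n=3$, at the cost of a bespoke number-theoretic check. You were right not to rely on $\lambda=(1,2,\ldots,n-1,-\binom n2)$ without perturbation: for $n=4$ it gives $\lambda_1+\lambda_2-\lambda_3=0$.
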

\begin{proof}
Choose
\[
 \alpha_i=3^{i-1}\quad(1\le i<n),
 \qquad
 \alpha_n=-\sum_{i=1}^{n-1}3^{i-1},
 \qquad
 a_t=\operatorname{diag}
       (e^{\alpha_1t},\ldots,e^{\alpha_nt}).
\]
The exponents sum to zero, so \(\det a_t=1\). On the dual
basis vector \(e_i^*\), the multiplier is
\(e^{-\alpha_it}\), while on \(e_j\wedge e_k\) it is
\(e^{(\alpha_j+\alpha_k)t}\). Thus the weight in question is
\[
 \beta_{ijk}=-\alpha_i+\alpha_j+\alpha_k,
 \qquad j<k.
\]
We verify explicitly that it is nonzero.

If \(i=j\) or \(i=k\), cancellation leaves one of the
nonzero numbers \(\alpha_1,\ldots,\alpha_n\). Otherwise
\(i,j,k\) are distinct. If all three are less than \(n\),
equality \(\beta_{ijk}=0\) would express one power of three
as a sum of two distinct powers of three, which is impossible
by uniqueness of the base-three expansion. If \(i=n\), all
three summands \(-\alpha_n,\alpha_j,\alpha_k\) are positive.
Finally, if \(n\) is one of \(j,k\), let \(\ell\) denote the
other one. Then
\[
 \beta_{ijk}
 =-\alpha_i+\alpha_\ell
       -\sum_{r=1}^{n-1}\alpha_r<0,
\]
because \(i\ne\ell\) and all the numbers in this last sum
are positive. These cases exhaust the possibilities.

On a vector of weight \(\beta\), the action is multiplication
by \(e^{\beta t}\). If \(\beta<0\), take \(t\to+\infty\).
If \(\beta>0\), take \(t\to-\infty\). In both cases the
multiplier tends to zero.
\end{proof}

For example, when \(n=3\), the chosen exponents are
\((1,3,-4)\). If \(i,j,k\) are distinct, the trace-zero
condition gives \(\beta_{ijk}=-2\alpha_i\), which is
nonzero. If an index cancels, the weight is one of
\(1,3,-4\). Thus the rank-three case needed for
\cref{thm:autquotients} is covered by the same argument.

\begin{proposition}\label{prop:autaffineT}
For \(n\ge3\), the groups \[V_n\rtimes S_n \text{\ \ \ \  and\ \ \ \ \ }
W_n\rtimes\GL_n(\Z)\] have property~\(\rt\).
\end{proposition}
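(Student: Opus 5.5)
The proof follows the pattern of \cref{prop:arithmeticT}, with \(\Sp_{2g}\) replaced by \(S_n\) and \cref{lem:weights} replaced by \cref{lem:autweights}. First, property~\(\rt\) of \(V_n\rtimes S_n\) comes from \cref{prop:affineT}. We already noted that \(S_n\) has property~\(\rt\) for \(n\ge3\), since its identity component \(\SL_n(\R)\) has finite index. The representation \(\rho(s)f=(\bigwedge^2 s)fs^{-1}\) is continuous, because it is polynomial in the entries of \(s\) and \(s^{-1}\). By \cref{lem:autweights}, the real basis \(e_i^*\otimes(e_j\wedge e_k)\), \(j<k\), of \(V_n\) consists of weight vectors of nonzero weight for a one-parameter subgroup \(a_t\subset\SL_n(\R)\subset S_n\). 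Each of them is therefore contracted to zero by \(\rho(a_{\pm m})\) as \(m\to\infty\), and these vectors span \(V_n\).

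Second, we show that \(W_n\rtimes\GL_n(\Z)\) is a lattice in \(V_n\rtimes S_n\) by applying the ``in particular'' clause of \cref{prop:affinelattice}. The following points need checking.

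\begin{enumerate}[label=\textup{(\alph*)}]
\item \(\GL_n(\Z)\subseteq S_n\), since integral invertible matrices have determinant \(\pm1\).
\item \(\GL_n(\Z)\) is a lattice in \(S_n\). Indeed, \(\SL_n(\Z)\) is a lattice in \(\SL_n(\R)\), and both inclusions \(\SL_n(\Z)\subset\GL_n(\Z)\) and \(\SL_n(\R)\subset S_n\) have index two. A fundamental domain for \(\SL_n(\Z)\) in \(\SL_n(\R)\), translated by a representative of the other component, gives a finite-measure domain for \(\SL_n(\Z)\) in \(S_n\). The group \(\GL_n(\Z)\) contains \(\SL_n(\Z)\) with finite index, so it is again a lattice.
\item \(S_n\) is unimodular and second countable; unimodularity was shown above.
\item \(W_n\) is a full lattice in \(V_n\), preserved by \(\rho(\GL_n(\Z))\), because integral matrices and their inverses preserve \(H_n^*\otimes\bigwedge^2H_n\).
\end{enumerate}

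Third, and this is the only real computation, we check that \(\lvert\det\rho(s)\rvert=1\). Write \(V_n\cong H^*_{n,\R}\otimes\bigwedge^2H_{n,\R}\), where \(s\) acts on \(H^*\) by \((s^{-1})^{T}\). For operators \(X\) on a \(p\)-dimensional space and \(Y\) on a \(q\)-dimensional space, \(\det(X\otimes Y)=(\det X)^q(\det Y)^p\). By the triangularization argument used in \cref{prop:arithmeticT}, \(\det\bigwedge^2s=(\det s)^{n-1}\). Taking \(p=n\) and \(q=\binom n2\) gives
\[
 \det\rho(s)=(\det s)^{-\binom n2}(\det s)^{n(n-1)}=(\det s)^{\binom n2},
\]
whose absolute value is \(1\) on \(S_n\).

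\cref{prop:affinelattice} then shows that \(W_n\rtimes\GL_n(\Z)\) is a lattice in \(V_n\rtimes S_n\). Property~\(\rt\) passes to lattices by \cite[Theorem~1.7.1]{BHV}, so \(W_n\rtimes\GL_n(\Z)\) has property~\(\rt\).

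The main obstacle is not conceptual. The only point that needs care is the lattice claim in step~(b) for the disconnected group \(S_n\): we must make sure the fundamental domain covers both components.
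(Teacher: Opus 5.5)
Your proposal is correct and follows the paper's proof essentially step by step: \cref{lem:autweights} and \cref{prop:affineT} give property~\(\rt\) for \(V_n\rtimes S_n\); the exterior-square and tensor-product determinant identities give \(\det\rho(s)=(\det s)^{\binom n2}\); and \cref{prop:affinelattice} applies with \(\GL_n(\Z)\) a lattice in \(S_n\), obtained from \(\SL_n(\Z)\subseteq\SL_n(\R)\). Your explicit fundamental domain \(\mathcal D\cup t\mathcal D\) for the disconnected group \(S_n\) spells out a step that the paper leaves to \cite[Appendix~B]{BHV}.
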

\begin{proof}
The vectors contracted in \cref{lem:autweights} form a basis
of \(V_n\). Applying \cref{prop:affineT} with the subgroup
\(S_n\) therefore proves property~\(\rt\) for
\(V_n\rtimes S_n\). In terms of unitary representations,
property~\(\rt\) of \(S_n\) first supplies an
\(S_n\)-invariant vector. The contraction argument forces
that vector to be invariant under each translation along a
basis vector, hence under all of \(V_n\).

To pass to the integral group, we check volume preservation.
Write \(r_n=\binom n2\), so \(\dim V_n=nr_n\). The
determinant identities for an exterior square and a tensor
product give
\[
 \begin{aligned}
 \det(\rho(s))
 &=\det(s^{-T})^{r_n}
       \det\left(\bigwedge\nolimits^2s\right)^{n}\\
 &=(\det s)^{-r_n}(\det s)^{n(n-1)}
   =(\det s)^{r_n}.
 \end{aligned}
\]
For completeness, in the exterior-square determinant each
eigenvalue of \(s\) occurs in exactly \(n-1\) pairs, so
\[\det\left(\bigwedge^2s\right)=(\det s)^{n-1}.\]
For operators
\(u\) and \(v\) on spaces of dimensions \(p\) and \(q\),
respectively, the eigenvalues of \(u\otimes v\) are their
pairwise products, yielding
\[\det(u\otimes v)=(\det u)^q(\det v)^p.\]
These identities can be established by triangularizing over
\(\C\), without assuming diagonalizability.

Since \(s\in S_n\), we conclude
\[|\det\rho(s)|=1.\] Thus \(\rho(s)\) preserves Lebesgue
measure on \(V_n\), including when it reverses orientation.
The group \(\GL_n(\Z)\) preserves the lattice \(W_n\), and
is a lattice in \(S_n\). The latter assertion follows from
the classical lattice \(\SL_n(\Z)\subseteq\SL_n(\R)\) by passing
to these finite extensions, see \cite[Appendix~B]{BHV}.

All hypotheses of \cref{prop:affinelattice} now hold. More
explicitly, normalize Lebesgue measure so that a fundamental
parallelepiped for \(W_n\) has volume one, and multiply it
by Haar measure on \(S_n\). This is Haar measure on the
affine group. If \(\mathcal D\) is a finite-volume right
fundamental domain for \(\GL_n(\Z)\) in \(S_n\), the affine
fundamental domain has fiber \(\rho(s)P\) over \(s\in
\mathcal D\), where \(P\) is the parallelepiped. Each fiber
has volume one, so the total volume is the Haar measure of
\(\mathcal D\), which is finite. Thus
\(W_n\rtimes\GL_n(\Z)\) is a lattice in the real affine
group and inherits property~\(\rt\).
\end{proof}

\subsection{Passing to higher nilpotent stages}

\begin{proof}[Proof of \cref{thm:autquotients}]
By \cref{prop:autaffinerealization,prop:autaffineT}, the group
\(\mathcal{P}_{3,1}\) is isomorphic to a finite-index
subgroup of a group with property~\(\rt\), and hence has
property~\(\rt\).

Fix \(c\ge1\) and let \[J=\operatorname{IA}_3/
(\operatorname{IA}_3)_{[c+1]}.\] Magnus's finite-generation
theorem \cite[Section~6]{Magnus1935} shows that \(J\) is finitely generated. Its
\((c+1)\)-st lower-central subgroup is trivial, and therefore
it is nilpotent of class at most \(c\). Furthermore,
\[
 [J,J]=(\operatorname{IA}_3)_{[2]}/
                (\operatorname{IA}_3)_{[c+1]},
 \qquad
 \mathcal{P}_{3,c}/[J,J]\cong\mathcal{P}_{3,1}.
\]
The nilpotent quotient theorem \cref{thm:quotienttower} gives
property~\(\rt\) for \(\mathcal{P}_{3,c}\). Equivalently,
all hypotheses of \cref{thm:generalcriterion} have been
verified with \(G=\Aut(F_3)\), \(K=\operatorname{IA}_3\),
\(A=W_3\), \(\Gamma=\GL_3(\Z)\), \(S=S_3\), and the
central element \(-I\).

The same commutator identity gives
\(J/[J,J]\cong W_3\), which is an infinite free abelian
group. Thus \(J\) is infinite. The group
\(\mathcal{P}_{3,c}\) is finitely generated because
\(\Aut(F_3)\) is generated by finitely many Nielsen
automorphisms, and is infinite also because it surjects onto
\(\GL_3(\Z)\).

Finally, suppose that \(f:\Aut(F_3)\to E\) is a surjective
homomorphism for which \(f(\operatorname{IA}_3)\) is nilpotent
of class at most \(c\). Then
\(f((\operatorname{IA}_3)_{[c+1]})=1\), so \(f\) factors
through \(\mathcal{P}_{3,c}\). Quotient permanence proves
property~\(\rt\) for \(E\).
\end{proof}

\subsection{The Andreadakis filtration and the examples of
Lubotzky--Pak}

There is a related, but generally different, family of quotients
obtained by allowing automorphisms to act on nilpotent quotients
of the free group itself. For \(r\ge1\), define
\[
 \mathcal A_n(r)=\ker\!\left(
 \Aut(F_n)\longrightarrow
 \Aut\bigl(F_n/(F_n)_{[r+1]}\bigr)\right).
\]
This is the Andreadakis filtration \cite{Andreadakis}. Its first term is
\(\operatorname{IA}_n\), and
\cref{prop:iaabelianization} gives
\(\mathcal A_n(2)=[\operatorname{IA}_n,\operatorname{IA}_n]\).
Its centrality means that
\[
 [\mathcal A_n(r),\mathcal A_n(s)]
      \subseteq\mathcal A_n(r+s),
 \qquad
 (\operatorname{IA}_n)_{[r]}\subseteq\mathcal A_n(r).
\]
The first inclusion is the standard commutator property of the
Andreadakis filtration; see \cite[Section~3.2]{SatohSurvey}.
The second follows by induction from the first, starting with
\(\mathcal A_n(1)=\operatorname{IA}_n\).

\begin{corollary}\label{cor:tamefreequotients}
For \(n\ge3\) and \(r\ge1\), the image
\[
 \im\!\left(\Aut(F_n)\longrightarrow
        \Aut\bigl(F_n/(F_n)_{[r+1]}\bigr)\right)
\]
has property~\(\rt\). The corresponding image of
\(\operatorname{SAut}(F_n)\) has property~\(\rt\) as well.
\end{corollary}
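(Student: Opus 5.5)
The approach is to show that the image is a quotient of \(\mathcal{P}_{n,c}\) for a suitable \(c\), and then to obtain property~\(\rt\) for \(\mathcal{P}_{n,c}\) from \cref{thm:generalcriterion}.

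Write \(\pi_r:\Aut(F_n)\to\Aut\bigl(F_n/(F_n)_{[r+1]}\bigr)\). By definition \(\ker\pi_r=\mathcal A_n(r)\), so the image is isomorphic to \(\Aut(F_n)/\mathcal A_n(r)\). The inclusion \((\operatorname{IA}_n)_{[r]}\subseteq\mathcal A_n(r)\) shows that \(\pi_r\) factors through \(\Aut(F_n)/(\operatorname{IA}_n)_{[r]}\). Equivalently, \(\pi_r(\operatorname{IA}_n)\) is nilpotent of class at most \(r-1\). For \(r=1\) the image is \(\GL_n(\Z)\), which has property~\(\rt\) for \(n\ge3\). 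For \(r\ge2\) we take \(c=r-1\ge1\), and the image becomes a quotient of \(\mathcal{P}_{n,r-1}\).

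Next we prove property~\(\rt\) of \(\mathcal{P}_{n,c}\) for every \(n\ge3\). The proof of \cref{thm:autquotients} already supplies all the ingredients with \(n\) arbitrary. \cref{prop:autaffinerealization} and \cref{prop:autaffineT} are stated for all \(n\ge3\), so \(\mathcal{P}_{n,1}\) embeds with finite index in a group with property~\(\rt\). \cref{thm:quotienttower}, applied with \(G=\Aut(F_n)\) and \(K=\operatorname{IA}_n\), then gives property~\(\rt\) for every quotient in which \(\operatorname{IA}_n\) has nilpotent image, in particular for \(\operatorname{im}\pi_r\). (For \(n\ge4\) one could instead cite property~\(\rt\) of \(\Aut(F_n)\) and quotient permanence, but the uniform argument avoids this.)

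For \(\operatorname{SAut}(F_n)\), note that it has index two in \(\Aut(F_n)\). So \(\pi_r(\operatorname{SAut}(F_n))\) has index at most two in \(\pi_r(\Aut(F_n))\), and finite-index permanence \cite[Theorem~1.7.1]{BHV} gives the claim.

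The only delicate point is the inclusion \((\operatorname{IA}_n)_{[r]}\subseteq\mathcal A_n(r)\), together with its indexing: the class bound is \(r-1\), not \(r\). This was recorded just before the statement, and there is no other analytic obstacle. Finite generation plays no role, since \cref{thm:quotienttower} does not require it.
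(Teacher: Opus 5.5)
Your proof is correct, and its skeleton matches the paper's. You identify the image with \(\Aut(F_n)/\mathcal A_n(r)\), use \((\operatorname{IA}_n)_{[r]}\subseteq\mathcal A_n(r)\) to realize it as a quotient of \(\mathcal{P}_{n,r-1}\), and pass to the special image by finite-index permanence.

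The difference is how you obtain property~\(\rt\) when \(n\ge4\). The paper splits by rank. For \(n=3\) it invokes \cref{thm:autquotients}, which is your argument. For \(n\ge4\) it cites property~\(\rt\) of \(\Aut(F_n)\) (Kaluba--Nowak--Ozawa, Kaluba--Kielak--Nowak, Nitsche) together with quotient permanence. In that range the Andreadakis inclusion is logically unnecessary, since every quotient of \(\Aut(F_n)\) then has property~\(\rt\).

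You instead run the affine argument uniformly in \(n\). \cref{prop:autaffinerealization,prop:autaffineT} are proved for all \(n\ge3\): the weight choice in \cref{lem:autweights} and the identity \(\det\rho(s)=(\det s)^{\binom n2}\) do not depend on the rank. Hence \(\mathcal{P}_{n,1}\) has property~\(\rt\), and \cref{thm:quotienttower} with \(K=\operatorname{IA}_n\) finishes the argument. This is the extension the paper alludes to after \cref{thm:autquotients}.

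Your route gives a self-contained proof resting only on classical inputs. These are property~\(\rt\) of \(\SL_n(\R)\), arithmetic lattices, the integral abelianization of \(\operatorname{IA}_n\), and the permanence theorem \cref{thm:cwms}. It avoids the computer-assisted theorems about \(\Aut(F_n)\), and the inclusion \((\operatorname{IA}_n)_{[r]}\subseteq\mathcal A_n(r)\) becomes essential in every rank. The paper's route is shorter for \(n\ge4\) and gives the stronger fact that all quotients of \(\Aut(F_n)\) have property~\(\rt\), but only through those deep results.
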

\begin{proof}
For \(r=1\), the two images are \(\GL_n(\Z)\) and
\(\SL_n(\Z)\). For \(r\ge2\), the inclusion
\((\operatorname{IA}_n)_{[r]}\subseteq\mathcal A_n(r)\)
gives a surjection
\[
 \mathcal{P}_{n,r-1}\longrightarrow
        \Aut(F_n)/\mathcal A_n(r).
\]
The group on the right is the image in the statement. For \(n=3\),
it has property~\(\rt\) by \cref{thm:autquotients}; for \(n\ge4\),
this follows from property~\(\rt\) of \(\Aut(F_n)\) and quotient
permanence \cite{KalubaNowakOzawa,KalubaKielakNowak,NitscheAutFour}.
The special image is its subgroup of index two, detected by determinant
on first homology, and consequently also has property~\(\rt\).
\end{proof}

The special-image assertion of \cref{cor:tamefreequotients}
was proved by Lubotzky and Pak
\cite[Theorem~3.8 and Section~4]{LubotzkyPak}. Their proof
realizes that image as a lattice in a semidirect product of
a real unipotent group and \(\SL_n(\R)\), and proves
property~\(\rt\) for this ambient group. Thus the use of
arithmetic affine groups in this setting has a direct
precedent. Conversely, their theorem at \(r=2\), together
with \(\mathcal A_n(2)=[\operatorname{IA}_n,
\operatorname{IA}_n]\) and \cref{thm:quotienttower}, also
proves \cref{thm:autquotients}. The explicit first-stage
argument above identifies all of the ingredients without
using the higher-stage theorem of \cite{LubotzkyPak}.

The image in \cref{cor:tamefreequotients} is called the
\emph{tame automorphism group} of the free nilpotent group.
 In general, an automorphism of
\(F_n/(F_n)_{[r+1]}\) need not lift to an automorphism of
\(F_n\). Lubotzky and Pak explicitly distinguish the image from
the full automorphism group and note that the image has
infinite index for \(n\ge2\) and \(r\ge4\)
\cite[Example~1.14]{LubotzkyPak}.

\section{Nonlinearity}
\label{sec:nonlinearity}

We now prove \cref{thm:nonlinearity}. The obstruction is a rational
central subgroup in the unipotent completion of \(\mathcal{T}_g\). By a theorem
of Hain, this subgroup lies in the kernel of the homomorphism to the
unipotent radical of the relative completion of \(\Mod(\Sigma_g)\). It survives,
however, in the unipotent completion of each lower-central-series quotient
\(\mathcal N_{g,c}\), for \(c\ge2\). We will show
that a finite-kernel complex representation of \(\mathcal{Q}_{g,c}\) would kill
infinitely many elements of \(\mathcal{N}_{g,c}\) corresponding to this central
subgroup.

There are two points to verify. First, we must arrange that the image
of the whole Torelli group is unipotent, without introducing an infinite
kernel. Second, we must pass from the central subgroup in the completion
to elements of the discrete group. The first uses property~\(\rt\)
and the structure of nilpotent algebraic groups. The second uses the
rationality of Hain's central subgroup and Malcev theory.

\subsection{Algebraic closures and finite abelianization}

Unless a field of definition is specified, algebraic groups in the
characteristic-zero argument are affine algebraic groups over \(\C\).
We denote them by bold letters. Recall that a linear algebraic group
is a Zariski-closed subgroup of some \(\GL_d\), and that its identity
component \(\mathbf H^\circ\) has finite index. A matrix is unipotent
if all its eigenvalues are equal to \(1\), and semisimple if it is
diagonalizable over the algebraic closure. A torus is an algebraic
group isomorphic over that field to a product of multiplicative groups.

The unipotent radical \(\Ru(\mathbf H^\circ)\) is the largest
connected normal unipotent subgroup of \(\mathbf H^\circ\). The
quotient is reductive. We use the facts that a quotient by a closed
normal algebraic subgroup is again affine, that the derived subgroup
of a connected linear algebraic group is closed, and that a connected
nilpotent algebraic group in characteristic zero is a product of a
torus and a unipotent group. See \cite{Borel} and, for the nilpotent
Jordan decomposition, \cite[Section~2.1 and Appendix~A]{Baues}.

A group \(\Gamma\) has \emph{FAb} if every finite-index subgroup of
\(\Gamma\) has finite abelianization \cite[Section~1]{AbertLubotzkyPyber}. A group is \emph{perfect} if it
equals its commutator subgroup. For a connected algebraic group we use
the algebraic derived subgroup in this definition.

A discrete group with property~\(\rt\) has FAb. Indeed, property
\(\rt\) passes to finite-index subgroups and their abelianizations,
and a discrete abelian group with property~\(\rt\) is finite
\cite[Corollary~1.7.2]{BHV}. In particular, \cref{thm:main} gives FAb
for \(\mathcal{Q}_{g,c}\). 

\begin{lemma}\label{lem:fabclosure}
Let \(\Gamma\) have FAb, let \(r:\Gamma\to\GL_d(\C)\) be a
homomorphism, and let \(\mathbf H\) be the Zariski closure of its
image. Then \(\mathbf H^\circ\) is perfect and has no nontrivial
central torus.
\end{lemma}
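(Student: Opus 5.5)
Set \(\Gamma_0=r^{-1}(\mathbf H^\circ)\). Since \(\mathbf H^\circ\) has finite index in \(\mathbf H\) and \(r(\Gamma)\) is Zariski dense in \(\mathbf H\), the subgroup \(\Gamma_0\) has finite index in \(\Gamma\). Moreover \(r(\Gamma_0)=r(\Gamma)\cap\mathbf H^\circ\) is Zariski dense in \(\mathbf H^\circ\): the closure of \(r(\Gamma_0)\) is a closed subgroup of finite index in the connected group \(\mathbf H^\circ\), so it equals \(\mathbf H^\circ\). By FAb, every finite-index subgroup of \(\Gamma_0\) also has finite abelianization. I will deduce both conclusions by mapping \(\mathbf H^\circ\) onto a suitable abelian algebraic group and using density.

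\emph{Perfectness.} Let \(\mathbf D=[\mathbf H^\circ,\mathbf H^\circ]\). This is a closed, connected, normal subgroup, and \(\mathbf Q=\mathbf H^\circ/\mathbf D\) is a connected commutative affine algebraic group. The image of \(\Gamma_0\) in \(\mathbf Q\) is Zariski dense and abelian, so it is a quotient of \(\Gamma_0^{\ab}\), which is finite. A finite subgroup is Zariski closed, so density forces \(\mathbf Q\) to be finite. Being connected, \(\mathbf Q\) is trivial, and therefore \(\mathbf H^\circ=\mathbf D\) is perfect.

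\emph{No central torus.} Let \(\mathbf T\) be a central torus of \(\mathbf H^\circ\). The plan is to show that \(\mathbf T\subseteq[\mathbf H^\circ,\mathbf H^\circ]\) forces \(\mathbf T\) to be finite, hence trivial. The cleanest route is through the reductive quotient \(\mathbf H^\circ/\Ru(\mathbf H^\circ)\). This group is connected, reductive and perfect, since perfectness passes to quotients. A connected reductive group is the almost-direct product of its derived subgroup and its central torus \(\mathbf Z^\circ\), and its abelianization is \(\mathbf Z^\circ/(\mathbf Z^\circ\cap\text{derived})\), a torus. So perfectness means the radical torus is trivial and the reductive quotient is semisimple. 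Because \(\mathbf T\cap\Ru=1\), the torus \(\mathbf T\) maps isomorphically onto a connected central torus of this semisimple group. A semisimple group has finite center, so the image is trivial, and hence \(\mathbf T=1\).

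The key step is the identification of the abelianization of a connected reductive group with its central torus modulo a finite group. This is standard; I will cite \cite{Borel} for it and for the finiteness of the center of a semisimple group. The only delicate point is that the argument must really use the central-torus structure, which it does, rather than relying on the mere vanishing of characters.
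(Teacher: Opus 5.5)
Your proposal is correct and follows the paper's proof essentially step for step. It uses the same finite-index subgroup $\Gamma_0=r^{-1}(\mathbf H^\circ)$ with Zariski-dense image, the same finiteness-plus-density argument to kill $\mathbf H^\circ/[\mathbf H^\circ,\mathbf H^\circ]$, and the same passage to the semisimple quotient $\mathbf H^\circ/\Ru(\mathbf H^\circ)$ with finite center. The only difference is cosmetic: you use $\mathbf T\cap\Ru(\mathbf H^\circ)=1$ to get injectivity before concluding, whereas the paper first deduces $\mathbf T\subseteq\Ru(\mathbf H^\circ)$ and then notes that an element both semisimple and unipotent is trivial.
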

\begin{proof}
Set \(\Gamma_0=r^{-1}(\mathbf H^\circ)\). This is a finite-index
subgroup of \(\Gamma\), and \(r(\Gamma_0)\) is Zariski dense in
\(\mathbf H^\circ\): each component is both open and closed, so
intersection with a dense subgroup is dense in that component.

The image of \(\Gamma_0\) in
\[
 \mathbf H^\circ/[\mathbf H^\circ,\mathbf H^\circ]
\]
is finite, since \(\Gamma_0\) has finite abelianization. It is also
Zariski dense. A finite set is closed, and hence this connected
algebraic quotient is finite and therefore trivial.

Let \(\mathbf U=\Ru(\mathbf H^\circ)\). The reductive group
\(\mathbf H^\circ/\mathbf U\) is perfect, so it is semisimple and
has finite center. A central torus \(\mathbf T\) of
\(\mathbf H^\circ\) has connected image in that center. Its image
is therefore trivial, and \(\mathbf T\subseteq\mathbf U\). Since an
element which is both semisimple and unipotent is the identity, we
obtain \(\mathbf T=1\).
\end{proof}

The conclusion concerns central tori, not the whole center. In
particular, a perfect algebraic group may have a nontrivial unipotent
center.

\begin{lemma}\label{lem:unipotentization}
Under the hypotheses of \cref{lem:fabclosure}, let
\(N\subseteq\Gamma\) be a nilpotent normal subgroup and let \(\mathbf K\) be
the Zariski closure of \(r(N)\). There are a finite normal algebraic
subgroup \(\mathbf F\subseteq\mathbf H\) and a connected
unipotent normal algebraic subgroup \(\mathbf U\) such that
\[
 \mathbf K=\mathbf F\times\mathbf U.
\]
Consequently the homomorphism
\[
 \bar r:\Gamma\xrightarrow{r}\mathbf H(\C)
       \longrightarrow(\mathbf H/\mathbf F)(\C)
\]
maps all of \(N\) into a connected unipotent normal subgroup.
If \(r\) has finite kernel, then so does \(\bar r\).
\end{lemma}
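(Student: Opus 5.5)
Since \(N\) is normal in \(\Gamma\), the closure \(\mathbf K\) of \(r(N)\) is normalized by \(r(\Gamma)\), hence by its Zariski closure \(\mathbf H\); so \(\mathbf K\) is a closed normal subgroup of \(\mathbf H\). Zariski closure preserves nilpotence of bounded class, because the iterated commutator maps are morphisms and vanish on a dense subset. Thus \(\mathbf K\) is nilpotent. Its identity component \(\mathbf K^\circ\) is connected nilpotent in characteristic zero, so \(\mathbf K^\circ=\mathbf T\times\mathbf U\), with \(\mathbf T\) the maximal torus and \(\mathbf U\) the unipotent part. Both factors are characteristic in \(\mathbf K^\circ\) (they consist of the semisimple, respectively unipotent, elements), hence normal in \(\mathbf H\). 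The group \(\mathbf H^\circ\) acts on the torus \(\mathbf T\) through a connected subgroup of the discrete group \(\Aut(\mathbf T)\cong\GL_m(\Z)\), so this action is trivial. Thus \(\mathbf T\) is a central torus of \(\mathbf H^\circ\), and \cref{lem:fabclosure} forces \(\mathbf T=1\). Hence \(\mathbf K^\circ=\mathbf U\) is connected unipotent and normal in \(\mathbf H\).

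It remains to split off the finite group \(\mathbf K/\mathbf U\). In a nilpotent algebraic group the semisimple elements form a closed subgroup \(\mathbf K_s\), and \(\mathbf K=\mathbf K_s\times\mathbf K_u\) by the nilpotent Jordan decomposition cited from \cite{Baues}. Here \(\mathbf K_u\) is connected, since unipotent groups in characteristic zero are connected, so \(\mathbf K_u=\mathbf U\). Also \(\mathbf K_s\cap\mathbf K^\circ=\mathbf T=1\), so \(\mathbf K_s\) is finite. Put \(\mathbf F=\mathbf K_s\). It is characteristic in \(\mathbf K\), being the set of semisimple elements, and hence normal in \(\mathbf H\). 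This gives \(\mathbf K=\mathbf F\times\mathbf U\).

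Now \(\mathbf H/\mathbf F\) is an affine algebraic group, and the image of \(\mathbf K\) in it is \(\mathbf K/\mathbf F\cong\mathbf U\), which is connected unipotent and normal. Therefore \(\bar r(N)\) lies in this subgroup. For the kernel, \(\ker\bar r=r^{-1}(\mathbf F(\C))\). Its image under \(r\) is finite, and \(\ker r\) is finite, so \(\ker\bar r\) is finite.

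The main obstacle is the torus step: showing that \(\mathbf T\) is central in \(\mathbf H^\circ\), via rigidity of tori, and then using FAb through \cref{lem:fabclosure} to kill it. One should also check that semisimple elements of the possibly disconnected nilpotent group \(\mathbf K\) really form a closed subgroup. I would cite \cite{Baues} for that, or else argue inside \(\mathbf K\) with commuting Jordan parts.
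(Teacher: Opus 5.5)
Your proposal is correct and follows the same route as the paper. Both arguments get normality and nilpotence of \(\mathbf K\), use rigidity of the torus in \(\mathbf K^\circ=\mathbf T\times\mathbf U\) together with \cref{lem:fabclosure} to force \(\mathbf T=1\), and take \(\mathbf F\) to be the set of semisimple elements of \(\mathbf K\), which is characteristic and hence normal in \(\mathbf H\).

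The only difference is that you cite the nilpotent Jordan decomposition \(\mathbf K=\mathbf K_s\times\mathbf K_u\) for the disconnected group. The paper instead settles exactly the point you flagged. It takes a Levi decomposition \(\mathbf K=\mathbf F\ltimes\mathbf U\) and notes that \(\Ad(f)\) on \(\Lie\mathbf U\) is both semisimple, since \(f\) has finite order, and unipotent, by nilpotence and the chain rule applied to \(u\mapsto[f,u]\). Hence \(\Ad(f)\) is trivial, \(\mathbf F\) centralizes \(\mathbf U\), and uniqueness of Jordan decomposition identifies \(\mathbf F\) as the semisimple locus.
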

\begin{proof}
The group \(r(\Gamma)\) normalizes \(\mathbf K\), and Zariski
density implies that \(\mathbf H\) does as well. Suppose that \(N\)
has nilpotency class at most \(c\). Every iterated commutator of
length \(c+1\) is trivial on \(r(N)\). The commutator word is a
regular map, and a product of Zariski-dense sets is Zariski dense.
Thus the same identity holds on \(\mathbf K\), so \(\mathbf K\)
is nilpotent.

Write \(\mathbf K^\circ=\mathbf T\times\mathbf U\), where
\(\mathbf T\) is a torus and \(\mathbf U\) is unipotent. The
torus consists of the semisimple elements of \(\mathbf K^\circ\)
and is preserved by algebraic automorphisms. In particular,
\(\mathbf H^\circ\) normalizes it. A normal torus in a connected
algebraic group is central: conjugation acts on its discrete character
lattice, and such an action of a connected group is trivial. By
\cref{lem:fabclosure}, \(\mathbf T=1\), and therefore
\(\mathbf K^\circ=\mathbf U\).

The characteristic-zero Levi decomposition gives
\(\mathbf K=\mathbf F\ltimes\mathbf U\), where \(\mathbf F\)
is finite because \(\mathbf K/\mathbf U\) is finite. For
\(f\in\mathbf F\), nilpotence implies that \(\Ad(f)\) acts
unipotently on \(\Lie\mathbf U\). The map \(u\mapsto[f,u]\)
fixes the identity and has derivative \(\Ad(f)-I\) there.
Since \(\mathbf K\) has nilpotency class at most \(c\), its
\(c\)-fold composition is constant. The chain rule therefore gives
\((\Ad(f)-I)^c=0\) on \(\Lie\mathbf U\).
On the other hand, \(f\) has finite order,
so \(\Ad(f)\) is semisimple in characteristic zero. Hence
\(\Ad(f)=1\). Exponential and logarithm identify \(\mathbf U\)
with its nilpotent Lie algebra, and it follows that \(f\) centralizes
\(\mathbf U\). Thus
\[
 \mathbf K=\mathbf F\times\mathbf U.
\]

By uniqueness of Jordan decomposition, \(\mathbf F\) is precisely
the set of semisimple elements of \(\mathbf K\). It is therefore
preserved by every algebraic automorphism of \(\mathbf K\), including
conjugation by \(\mathbf H\). In particular, \(\mathbf F\) is
normal in the whole algebraic group \(\mathbf H\); see also
\cite[Section~2.1 and Appendix~A]{Baues}.

The image of \(\mathbf U\) in \(\mathbf H/\mathbf F\) is
connected, unipotent and normal, and contains the image of \(N\).
Finally, \(\ker\bar r=r^{-1}(\mathbf F)\), so
\[
 |\ker\bar r|\le |\mathbf F|\,|\ker r|
\]
whenever \(\ker r\) is finite.
\end{proof}

It is important here to quotient by \(\mathbf F\) before passing to
the identity component. Passing to a component first could replace
\(N\) by a proper finite-index subgroup. The lemma puts the image of
all of \(N\) in the identity component. We may then take finite-index
subgroups in the symplectic quotient while maintaining the full Torelli
group, as required below.

\subsection{Malcev completion}

Let \(\Ga\) denote the additive algebraic group, so that
\(\Ga(\Q)=(\Q,+)\). A prounipotent group is an inverse limit of
unipotent algebraic groups. The rational unipotent completion
\(\Lambda^{\un}\) of a finitely generated group \(\Lambda\) is
equipped with a homomorphism
\(\Lambda\to\Lambda^{\un}(\Q)\) universal for homomorphisms
from \(\Lambda\) to rational unipotent algebraic groups. Its Lie
algebra is the rational Malcev Lie algebra of \(\Lambda\).
Extension of scalars to \(\C\) gives the corresponding universal
property for complex unipotent targets.

We recall the form of Malcev theory which we will use. See
\cite[Chapter~II, Theorems~2.12 and~2.18, and Remark~2.16]{Raghunathan}
and \cite[Section~4]{HainMatsumoto} for details.

\begin{theorem}[Malcev]\label{thm:malcev}
Let \(N\) be a finitely generated nilpotent group. Its torsion
elements form a finite characteristic subgroup \(\tor(N)\). There
is a unipotent algebraic group \(\mathbf N\) over \(\Q\) and a
canonical homomorphism
\[
 j:N\longrightarrow\mathbf N(\Q),\qquad \ker j=\tor(N),
\]
such that \(j(N)\) is a lattice in \(\mathbf N(\R)\): it is
discrete and has compact quotient. The group \(\mathbf N\) is the
unipotent completion of \(N\).

If \(\mathbf V\subseteq\mathbf N\) is an algebraic subgroup defined over \(\Q\), then
\(j(N)\cap\mathbf V(\R)\) is a lattice in \(\mathbf V(\R)\).
In particular, if \(\mathbf V\cong\Ga\), this intersection is
infinite cyclic.

If \(N=\Lambda/\Lambda_{[c+1]}\), then
\[
 \Lie\mathbf N\cong
 \mathfrak m(\Lambda)/\mathfrak m(\Lambda)_{[c+1]},
\]
where \(\mathfrak m(\Lambda)\) is the rational Malcev Lie algebra
of \(\Lambda\).

\end{theorem}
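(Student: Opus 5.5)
This statement is classical Malcev theory, and I would prove it in four steps: finiteness of torsion, construction of \(\mathbf N\) with the correct kernel, the lattice properties, and the Lie algebra identification.

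\emph{Torsion.} I would induct on the nilpotency class using the lower central series. Each quotient \(N_{[i]}/N_{[i+1]}\) is a finitely generated abelian group, because it is generated by iterated commutators of the finitely many generators. It follows that \(N\) is polycyclic, so every subgroup is finitely generated. To see that torsion elements form a subgroup, use the standard fact that in a nilpotent group the elements of finite order form a subgroup. One proof is by induction: modulo the center the statement holds, and a product of two torsion elements generates a finitely generated nilpotent torsion subgroup, which is finite by polycyclicity. This subgroup is finitely generated and torsion, hence finite. It is characteristic because automorphisms preserve element orders.

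\emph{Construction of \(\mathbf N\) and the kernel of \(j\).} Pass to \(N/\tor(N)\), which is torsion-free. Choose a Malcev basis \(u_1,\dots,u_n\) adapted to a central series with infinite cyclic factors, so that every element is uniquely \(u_1^{t_1}\cdots u_n^{t_n}\) with \(t_i\in\Z\). By Hall's theory, multiplication and inversion in these coordinates are given by polynomials with rational coefficients. Allowing \(t_i\in\Q\) (respectively \(\R\)) then defines a unipotent \(\Q\)-group \(\mathbf N\) in which \(N/\tor(N)\) sits as the integer points of the coordinates. Alternatively, I would embed \(N/\tor(N)\) into a unitriangular group \(U_m(\Z)\) by Hall's/Jennings' theorem and take the Zariski closure. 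In either construction, the kernel of \(j\) contains \(\tor(N)\) because unipotent groups in characteristic zero are torsion-free, and \(j\) is injective on \(N/\tor(N)\).

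\emph{Lattice properties.} Discreteness and cocompactness of \(j(N)\) in \(\mathbf N(\R)\) come from the coordinates. The image is the set of \(\Z^n\)-points in coordinates of the second kind, and the box \([0,1]^n\) surjects onto the quotient, by induction along the central series with one-dimensional factors. For a \(\Q\)-subgroup \(\mathbf V\), I would apply Raghunathan's criterion: a \(\Q\)-defined subgroup meets a lattice commensurable with \(\mathbf N(\Z)\) in a lattice. This is the step I expect to be the main obstacle. The key is that \(\mathbf V(\Q)\cap j(N)\) is Zariski dense in \(\mathbf V\), which holds because rational points of a unipotent \(\Q\)-group have a power lying in any arithmetic subgroup. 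Cocompactness is then Raghunathan's Theorem 2.12. When \(\mathbf V\cong\Ga\), a lattice in \(\R\) is infinite cyclic.

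\emph{Lie algebra identification.} For \(N=\Lambda/\Lambda_{[c+1]}\), I would compare universal properties. Every homomorphism \(\Lambda\to\mathbf U(\Q)\) into a unipotent group of class at most \(c\) factors through \(N\). Hence \(\mathbf N\) is the class-\(c\) quotient of \(\Lambda^{\un}\), and by the Lie correspondence its Lie algebra is \(\mathfrak m(\Lambda)/\mathfrak m(\Lambda)_{[c+1]}\). Here I would use that the lower central series of the prounipotent group corresponds, via \(\log\), to the lower central series of its Lie algebra.
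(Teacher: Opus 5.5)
Your outline is correct, but the paper follows a different route because it does not prove this classical statement. It cites Raghunathan (Chapter~II, Theorems~2.12 and~2.18, Remark~2.16) and Hain--Matsumoto for the following:
\begin{itemize}
\item the existence of \(\mathbf N\);
\item \(\ker j=\tor(N)\);
\item the lattice property;
\item the intersection with rational subgroups.
\end{itemize}
It argues only the final Lie algebra identification, and there it uses exactly your universal-property comparison: a homomorphism from \(\Lambda\) to a unipotent group of class at most \(c\) kills \(\Lambda_{[c+1]}\). Your reconstruction of the cited parts is a more self-contained argument, using Hall/Malcev coordinates, arithmeticity of \(j(N)\), and Malcev closure for Zariski-dense discrete subgroups.

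Two places in your reconstruction are thin.

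First, the torsion step. Induction modulo the center only shows that \(H=\langle x,y\rangle\) has finite image in \(N/Z(N)\), so \(H\) is center-by-finite; it does not yet show that \(H\) is torsion. To conclude that \(H\) is finite you need one of the following:
\begin{itemize}
\item Schur's theorem: \([H,H]\) is finite, and \(H_{\ab}\) is generated by two torsion elements, hence finite;
\item the multilinear surjection \((H_{\ab})^{\otimes i}\to H_{[i]}/H_{[i+1]}\).
\end{itemize}

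Second, you never check that your \(\mathbf N\) is the unipotent completion, yet your last paragraph relies on its universal property. This follows from your coordinates. Let \(f:N\to\mathbf U(\Q)\) be a homomorphism; it kills \(\tor(N)\), since \(\mathbf U(\Q)\) is torsion-free. The map \((t_1,\dots,t_n)\mapsto f(u_1)^{t_1}\cdots f(u_n)^{t_n}\) is polynomial, because \(t\mapsto\exp(t\log u)\) is polynomial in a unipotent group. It is a homomorphism on \(\Z^n\), hence on \(\Q^n\) by Zariski density of \(\Z^n\times\Z^n\). The extension is unique because \(j(N)\) is Zariski dense in \(\mathbf N\).
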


On the Lie-algebra side we use the analogous notation for the lower
central series, taking closed ideals in the pronilpotent case.
The last identification also follows from the universal property.
A homomorphism from \(\Lambda\) into a unipotent group of class at
most \(c\) kills \(\Lambda_{[c+1]}\), and the corresponding
quotient of its Malcev Lie algebra is
\(\mathfrak m(\Lambda)/\mathfrak m(\Lambda)_{[c+1]}\).

\begin{lemma}\label{lem:rationalkernel}
With the notation of \cref{thm:malcev}, let
\(\mathbf V\subseteq\mathbf N\) be a rational subgroup isomorphic to
\(\Ga\), and let \(b:N\to\mathbf W(\C)\) be a homomorphism
into a complex unipotent algebraic group. Denote its algebraic
extension by \(b^{\un}:\mathbf N_{\C}\to\mathbf W\).
If \(b^{\un}\) is trivial on \(\mathbf V_{\C}\), then
\(\ker b\) is infinite.
\end{lemma}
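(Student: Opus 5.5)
The plan is to exhibit an explicit infinite subset of \(\ker b\) coming from the discrete points of \(\mathbf V\). By \cref{thm:malcev}, the intersection \(j(N)\cap\mathbf V(\R)\) is infinite cyclic; choose a generator \(j(n_0)\), with \(n_0\in N\) of infinite order, since \(j(n_0)\ne1\) and \(\ker j=\tor(N)\). The candidates for kernel elements are the powers \(n_0^k\), \(k\in\Z\), which are pairwise distinct because their images \(j(n_0)^k\) are distinct.

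The key step is to verify the factorization \(b=b^{\un}\circ j\) on \(N\). This is the meaning of the algebraic extension: by the universal property of the unipotent completion (extended to complex unipotent targets), \(b\) factors through \(j\) followed by a morphism \(\mathbf N_\C\to\mathbf W\), namely \(b^{\un}\), restricted to \(\Q\)-points inside \(\C\)-points. One small point to record is that \(b\) kills \(\tor(N)\) automatically, since a complex unipotent group is torsion-free (through \(\log\), torsion elements go to zero). This is consistent with the factorization.

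Then, for every \(k\), we have \(j(n_0^k)=j(n_0)^k\in\mathbf V(\Q)\subseteq\mathbf V(\C)\). Here \(j(n_0)\) is a rational point lying in \(\mathbf V(\R)\), so it lies in \(\mathbf V(\Q)\). Therefore \(b(n_0^k)=b^{\un}(j(n_0)^k)=1\) by the hypothesis that \(b^{\un}\) is trivial on \(\mathbf V_\C\). Hence \(\{n_0^k\}\subseteq\ker b\) is infinite. In fact, \(\ker b\) contains the full preimage \(j^{-1}(\mathbf V(\R))\), which is an extension of \(\Z\) by the finite group \(\tor(N)\).

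I expect the only real obstacle to be the bookkeeping in the factorization step: making sure that the algebraic extension \(b^{\un}\) agrees with \(b\) on \(N\) via \(j\), and that \(\mathbf V(\Q)\) sits inside \(\mathbf V_\C(\C)\) compatibly with the base change of \(\mathbf N\). I would handle this by invoking the stated universal property directly: \(b^{\un}\) is \emph{defined} as the unique morphism with \(b^{\un}\circ j=b\). The lattice statement from \cref{thm:malcev} then supplies the infinitude.
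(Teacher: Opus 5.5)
Your proposal is correct and follows essentially the same route as the paper: pick a nontrivial element of the lattice \(j(N)\cap\mathbf V(\R)\) supplied by \cref{thm:malcev}, use the factorization \(b=b^{\un}\circ j\) to see that all its powers lie in \(\ker b\), and note that these powers are distinct because their images under \(j\) are. Your extra remarks are correct but not needed for the conclusion: that \(j(n_0)\in\mathbf V(\Q)\), and that \(\ker b\) contains all of \(j^{-1}(\mathbf V(\R))\) because \(b\) kills \(\tor(N)\).
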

\begin{proof}
By \cref{thm:malcev}, choose a nonidentity element
\(j(n)\in j(N)\cap\mathbf V(\R)\). It has infinite order because
\(\mathbf V(\R)\cong(\R,+)\). We have
\[
 b(n)=b^{\un}(j(n))=1.
\]
The powers of \(n\) are distinct, since their images under \(j\)
are distinct, and all lie in \(\ker b\).
\end{proof}

Rationality is essential to this argument. An irrational line in a
real vector space may meet an integral lattice only at zero. Zariski
density alone is therefore not enough to imply the conclusion of the lemma.

\subsection{Relative completion}

Write the homological representation as \(\sigma:\Mod(\Sigma_g)\to\Sp_{2g}(\Q)\). Its image
\(\Sp_{2g}(\Z)\) is Zariski dense. The relative unipotent completion
of \(\Mod(\Sigma_g)\) with respect to \(\sigma\) is an extension
\[
 1\longrightarrow\mathcal U_g\longrightarrow\mathcal G_g
 \longrightarrow\Sp_{2g,\Q}\longrightarrow1,
\]
where \(\mathcal U_g\) is prounipotent, together with a lift
\(\widetilde\sigma:\Mod(\Sigma_g)\to\mathcal G_g(\Q)\) of \(\sigma\).
It has the following universal property. If
\[
 1\longrightarrow\mathbf W\longrightarrow\mathbf E
 \longrightarrow\Sp_{2g,\Q}\longrightarrow1
\]
is an algebraic extension with \(\mathbf W\) unipotent, every
homomorphism \(\Mod(\Sigma_g)\to\mathbf E(\Q)\) lifting \(\sigma\) factors
through a morphism \(\mathcal G_g\to\mathbf E\). For \(g\ge3\),
this relative completion commutes with extension of scalars from
\(\Q\) to \(\C\), by \cite[Theorem~3.1]{Hain}. Thus the
same universal property holds for complex targets. This
completion retains the prescribed symplectic representation and allows
unipotent information above it. It also differs from the ordinary unipotent
completion of \(\Mod(\Sigma_g)\). See \cite[Section~3]{Hain} and
\cite[Section~4]{HainMatsumoto}. 

Let \(\mathscr{P}_g\) denote the rational unipotent completion
of \(\mathcal{T}_g\), and let \(\mathfrak t_g\) be its Lie algebra.
Write \(\mathscr{P}_{g,\C}\) for its extension of scalars to \(\C\).
The image of \(\mathcal{T}_g\) in \(\mathcal G_g\) lies
in \(\mathcal U_g\), since \(\sigma(\mathcal{T}_g)=1\). It induces a
homomorphism \(\theta:\mathscr{P}_g\to\mathcal U_g\). For
\(m\ge1\), set
\[
 \Sp_{2g}(\Z)[m]=\ker\bigl(\Sp_{2g}(\Z)
                  \to\Sp_{2g}(\Z/m\Z)\bigr),\qquad
 \Mod(\Sigma_g)[m]=\sigma^{-1}(\Sp_{2g}(\Z)[m]).
\]
In particular, \(\mathcal{T}_g\) is a subgroup of every \(\Mod(\Sigma_g)[m]\):
\[
 \mathcal{T}_g\subseteq\Mod(\Sigma_g)[m]\qquad(m\ge1).
\]

\begin{theorem}[Hain]\label{thm:hainkernel}
For \(g\ge3\), the following statements hold over \(\Q\).
\begin{enumerate}[label=(\roman*)]
\item Restriction of \(\widetilde\sigma\) to \(\Mod(\Sigma_g)[m]\) gives
      its completion relative to \(\sigma|_{\Mod(\Sigma_g)[m]}\), for every
      \(m\ge1\). Thus passing from \(\Mod(\Sigma_g)\) to any level subgroup
      \(\Mod(\Sigma_g)[m]\) does not change its relative completion
      with respect to the homological representation.
\item There is an exact central extension
      \[
       1\longrightarrow\mathbf Z_g\longrightarrow\mathscr{P}_g
       \xrightarrow{\theta}\mathcal U_g\longrightarrow1,
       \qquad \mathbf Z_g\cong\Ga.
      \]
\item If \(\mathfrak z_g=\Lie\mathbf Z_g\), then
      \[
       \mathfrak z_g\subset(\mathfrak t_g)_{[2]},
       \qquad
       \mathfrak z_g\cap(\mathfrak t_g)_{[3]}=0.
      \]
      Consequently this rational central line survives in
      \(\mathfrak t_g/(\mathfrak t_g)_{[c+1]}\) for every
      \(c\ge2\).
\end{enumerate}
\end{theorem}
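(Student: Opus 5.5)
The plan is to assemble the three parts from Hain's work on relative completions \cite{Hain}, together with the setup recalled in \cite[Section~4]{HainMatsumoto}, and to reduce each part to a cohomological statement about \(\Sp_{2g}(\Z)\) and its congruence subgroups.

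\emph{Part (i).} I would use Hain's comparison criterion for relative completions. Let \(\Gamma'\subseteq\Gamma\) be a finite-index subgroup whose image \(\Sp_{2g}(\Z)[m]\) is still Zariski dense in \(\Sp_{2g}\). The restriction of \(\widetilde\sigma\) to \(\Gamma'\) is then the relative completion of \(\Gamma'\) provided two conditions hold. First, for every nontrivial irreducible rational representation \(V\) of \(\Sp_{2g}\), the groups \(H^1(\Gamma;V)\) and \(H^1(\Gamma';V)\) must agree. Second, \(H^2\) must inject in the corresponding range. For the arithmetic part I would invoke Borel's and Raghunathan's vanishing theorems, which give \(H^1(\Sp_{2g}(\Z)[m];V)=0\) for \(g\ge2\) and nontrivial \(V\). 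For the Torelli contribution I would use the Hochschild--Serre spectral sequence together with the fact that \(\mathcal T_g\) is common to all the \(\Mod(\Sigma_g)[m]\). Both \(H^1(\mathcal T_g)\) and the relevant parts of \(H^2\) are finite-dimensional algebraic \(\Sp\)-modules, so taking invariants under a Zariski-dense subgroup gives the same answer at every level.

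\emph{Part (ii).} The map \(\theta\) is surjective because \(H_1(\mathfrak t_g)\to H_1(\mathfrak u_g)\) is an isomorphism for \(g\ge3\): both equal \(\Lambda^3H/H\), by Johnson's theorem (\cref{thm:johnsoninput}) and Hain's computation of \(H_1(\mathfrak u_g)\). To identify the kernel, I would compare the Hochschild--Serre spectral sequences for \(1\to\mathcal T_g\to\Mod\to\Sp_{2g}(\Z)\to1\) and for \(1\to\mathcal U_g\to\mathcal G_g\to\Sp_{2g}\to1\). The defect sits in \(H^2(\Sp_{2g}(\Z);\Q)\), which in the stable range is one-dimensional, spanned by the first Chern class of the Hodge bundle. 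Dualizing, this gives exactly a one-dimensional kernel on which \(\Sp\) acts trivially, so \(\mathbf Z_g\cong\Ga\). Centrality in \(\mathscr P_g\) follows because the kernel is an \(\Sp\)-invariant line, \(\Sp\)-trivial, and an ideal of the pronilpotent \(\mathfrak t_g\); a trivial one-dimensional ideal of a pronilpotent Lie algebra is central, since \(\mathrm{ad}\) acts nilpotently on it.

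\emph{Part (iii).} This is the main obstacle. The statement \(\mathfrak z_g\subset(\mathfrak t_g)_{[2]}\) is immediate, since \(\theta\) is an isomorphism on \(H_1\) and so its kernel lies in the commutator ideal. For \(\mathfrak z_g\cap(\mathfrak t_g)_{[3]}=0\), I would use the mixed Hodge structure on \(\mathfrak t_g\) and its weight filtration. For \(g\ge3\), Hain shows that the weight filtration and the lower central series agree after the natural splitting, with \(\mathrm{Gr}^W_{-k}\mathfrak t_g\cong(\mathfrak t_g)_{[k]}/(\mathfrak t_g)_{[k+1]}\). The line \(\mathfrak z_g\) is the image of the polarization class, so it is a Hodge–Tate class of type \((1,1)\) and sits purely in weight \(-2\). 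By strictness of morphisms with respect to \(W\), it meets \(W_{-3}=(\mathfrak t_g)_{[3]}\) trivially. Concretely, \(\mathfrak z_g\) is spanned by the trivial \(\Sp\)-summand of \(\Lambda^2(\Lambda^3H/H)\) that survives in \(\mathrm{Gr}_2\) of \(\mathfrak t_g\) but is killed in \(\mathfrak u_g\). The consequence for \(c\ge2\) then follows from \((\mathfrak t_g)_{[c+1]}\subseteq(\mathfrak t_g)_{[3]}\).
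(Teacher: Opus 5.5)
The paper gives no argument of its own here. It cites \cite[Prop.~3.3, Thm.~3.4, Thm.~4.10]{Hain}, and for (iii) it relies on exactly your mechanism: \(\mathfrak z_g\cong\Q(1)\) is pure of weight \(-2\) and the weight filtration equals the lower central series, so \(\mathfrak z_g\cap(\mathfrak t_g)_{[3]}=W_{-3}\mathfrak z_g=0\). (Incidentally, \(\Q(1)\) has type \((-1,-1)\); type \((1,1)\) belongs to the dual class in \(H^2\).)

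Your reconstruction of (ii), however, has a genuine gap. Write \(\mathfrak u_g=\Lie\mathcal U_g\) and \(\mathfrak k=\ker(\mathfrak t_g\to\mathfrak u_g)\). Since \(\theta\) is an isomorphism on \(H_1\), the five-term sequence gives \((\mathfrak k/[\mathfrak t_g,\mathfrak k])^*\cong\ker(H^2(\mathfrak u_g)\to H^2(\mathfrak t_g))\). So comparing spectral sequences controls only the \(\mathfrak t_g\)-coinvariants of \(\mathfrak k\), not \(\mathfrak k\) itself. For a free Lie algebra on two generators mapping onto its abelianization, the coinvariants of the kernel are one-dimensional, yet the kernel is infinite-dimensional and not central. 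Deducing centrality from one-dimensionality is therefore circular, and centrality must be proved first. The standard argument runs as follows. The conjugation action of \(\Mod(\Sigma_g)\) on \(\mathscr P_g\) takes values in the fiber product of \(\Sp_{2g}\) and \(\Aut(\mathscr P_g)\) over \(\GL(H_1(\mathfrak t_g))\). This is an extension of \(\Sp_{2g}\) by a prounipotent group, so the action extends to \(\mathcal G_g\). Precomposed with \(\theta\), it is the adjoint action of \(\mathscr P_g\). Hence \(\ker\theta\) acts trivially on \(\mathscr P_g\), i.e.\ it is central. Only then does your comparison show that \(\mathfrak k\) is \(\Sp\)-trivial of dimension at most one. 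That comparison uses \(H^1(\Sp_{2g}(\Z);M)=0\) for algebraic \(M\) and Borel's vanishing of \(H^2(\Sp_{2g}(\Z);V)\) for nontrivial \(V\).

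Second, this is only an upper bound, and ``exactly one-dimensional'' needs an input you do not supply. With \(\mathfrak k\) central, \(\mathfrak k\ne0\) if and only if the pullback of \(\lambda_1\) to \(\Mod(\Sigma_g)\) lies in the image of \(H^2(\mathcal G_g;\Q)\to H^2(\Mod(\Sigma_g);\Q)\). That \(H^2(\Sp_{2g}(\Z);\Q)\) is one-dimensional says nothing about this. One needs, for example, Morita's formula, which expresses the first MMM class as a nonzero multiple of a contraction of the cup square of the extended Johnson class in \(H^1(\Mod(\Sigma_g);\Lambda^3H_g/j(H_g)\otimes\Q)\); that class comes from \(\mathcal G_g\). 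Alternatively, Hain's Hodge-theoretic argument supplies it. The same fact is what makes the invariant line of \(\Lambda^2(\Lambda^3H_g/j(H_g))\) survive in \(\mathfrak t_g/(\mathfrak t_g)_{[3]}\) but die in \(\mathfrak u_g\). So your proof of (iii) implicitly depends on it through \(\mathfrak z_g\cong\Q(1)\).

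Your sketch of (i), by contrast, is repairable. Injectivity on \(H^2\) is automatic for finite-index subgroups by transfer, so nothing about \(H^2(\mathcal T_g)\) is needed. For \(H^1\), including trivial coefficients, use \(H^1(\Mod(\Sigma_g);V)=H^1(\Mod(\Sigma_g)[m];V)^{\Mod(\Sigma_g)/\Mod(\Sigma_g)[m]}\). The finite quotient acts trivially here, because \(H^1(\Mod(\Sigma_g)[m];V)\) injects equivariantly into \(\Hom_{\Sp}(H_1(\mathcal T_g;\Q),V)\).
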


These statements are Proposition~3.3 (p.~604), Theorem~3.4 (p.~605),
and Theorem~4.10 and the ensuing discussion (pp.~611--612) of
\cite{Hain}. For (iii), Hain identifies the central line with
\(\Q(1)\), the one-dimensional Tate Hodge structure of weight
\(-2\), and identifies the weight filtration with the lower central
filtration. We use only the resulting inclusions in (iii). In
particular, the argument does not require a quadratic presentation
of the Torelli Lie algebra in a larger genus range.

\begin{corollary}\label{cor:centralline}
Let \(g\ge3\) and \(c\ge2\), and let \(\mathbf N_c\) be the
rational unipotent completion of \(\mathcal{N}_{g,c}\). The natural map
\(p_c:\mathscr{P}_g\to\mathbf N_c\) is injective on
\(\mathbf Z_g\). Its image \(\mathbf Z_c\) is a rational central
subgroup of \(\mathbf N_c\) isomorphic to \(\Ga\).
\end{corollary}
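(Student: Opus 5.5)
The plan is to pass to Lie algebras and use the description of \(\Lie\mathbf N_c\) from \cref{thm:malcev}. Since \(\mathcal{N}_{g,c}=\mathcal{T}_g/(\mathcal{T}_g)_{[c+1]}\), \cref{thm:malcev} identifies \(\Lie\mathbf N_c\) with \(\mathfrak t_g/(\mathfrak t_g)_{[c+1]}\). Here \(\mathfrak t_g=\mathfrak m(\mathcal{T}_g)\) is the Lie algebra of \(\mathscr P_g\), and \(\mathcal{T}_g\) is finitely generated for \(g\ge3\), so the theorem applies.

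First I will check that \(p_c\) is the morphism of prounipotent groups over \(\Q\) given by the universal property of \(\mathscr P_g\). It is applied to the composite \(\mathcal{T}_g\to\mathcal{N}_{g,c}\to\mathbf N_c(\Q)\). On Lie algebras, \(dp_c\) is then the quotient map \(\mathfrak t_g\to\mathfrak t_g/(\mathfrak t_g)_{[c+1]}\). The natural way to see this is through the last identification in \cref{thm:malcev}: a homomorphism into a unipotent group of class at most \(c\) kills \(\Lambda_{[c+1]}\) and factors through the quotient \(\mathfrak m(\Lambda)/\mathfrak m(\Lambda)_{[c+1]}\). I expect this compatibility between the universal map and the quotient to be the main point to verify carefully. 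In particular, the closed-ideal convention for \((\mathfrak t_g)_{[c+1]}\) must be the one used in \cref{thm:hainkernel}(iii).

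Next I will use \cref{thm:hainkernel}(iii). It gives \(\mathfrak z_g\cap(\mathfrak t_g)_{[3]}=0\), and \((\mathfrak t_g)_{[c+1]}\subseteq(\mathfrak t_g)_{[3]}\) for \(c\ge2\). Hence \(dp_c\) is injective on the line \(\mathfrak z_g\). For unipotent groups in characteristic zero, \(\exp\) is an isomorphism of varieties compatible with morphisms. So \(p_c\) restricted to \(\mathbf Z_g=\exp\mathfrak z_g\) is \(\exp\circ\,dp_c\circ\log\), which is injective. Its image \(\mathbf Z_c=\exp(dp_c(\mathfrak z_g))\) is a one-dimensional vector group defined over \(\Q\), because \(\mathfrak z_g\) and \(dp_c\) are rational. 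Thus \(\mathbf Z_c\cong\Ga\).

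Centrality follows by a surjectivity argument. By \cref{thm:hainkernel}(ii), \(\mathbf Z_g\) is central in \(\mathscr P_g\). The map \(p_c\) is surjective because \(dp_c\) is a quotient map, and the image of a connected unipotent group is the exponential of the image Lie algebra. Therefore \(\mathbf Z_c=p_c(\mathbf Z_g)\) is central in \(p_c(\mathscr P_g)=\mathbf N_c\). On the Lie algebra side this is immediate: \([\mathfrak z_g,\mathfrak t_g]=0\) maps to \([dp_c\mathfrak z_g,\Lie\mathbf N_c]=0\).
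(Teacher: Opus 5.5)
Your proposal is correct and follows the paper's proof. Both identify \(\Lie\mathbf N_c\) with \(\mathfrak t_g/(\mathfrak t_g)_{[c+1]}\) via \cref{thm:malcev}, use \(\mathfrak z_g\cap(\mathfrak t_g)_{[3]}=0\) from \cref{thm:hainkernel}(iii) to get injectivity on \(\mathfrak z_g\), pass to groups by the exponential, and deduce rationality and centrality from the rationality of \(\mathfrak z_g\) and the surjectivity of \(p_c\). Your extra check, that \(dp_c\) is the quotient map under the same closed lower-central convention as in Hain's statement, only makes explicit what the paper leaves implicit.
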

\begin{proof}
By \cref{thm:malcev},
\[
 \Lie\mathbf N_c=
 \mathfrak t_g/(\mathfrak t_g)_{[c+1]}.
\]
Since \(c+1\ge3\), part (iii) of \cref{thm:hainkernel} implies
that the quotient map is injective on \(\mathfrak z_g\).
Exponential identifies one-dimensional unipotent groups with their
Lie algebras in characteristic zero, so \(p_c\) is injective on
\(\mathbf Z_g\). Its image is defined over \(\Q\) and is central
because \(p_c\) is surjective.
\end{proof}

Thus the rational unipotent completion of the finite-stage Torelli quotient contains a rational central
subgroup which maps trivially into every unipotent extension of the
prescribed symplectic representation. To apply this observation to an
arbitrary complex representation, we need arithmetic superrigidity.

\subsection{Arithmetic extension}
We record the following statement will allow to complete the nonlinearity argument.
\begin{theorem}\label{thm:arithmeticextension}
Let \(g\ge3\).
\begin{enumerate}[label=(\roman*)]
\item Every finite-index subgroup of \(\Sp_{2g}(\Z)\) contains
      a principal congruence subgroup \(\Sp_{2g}(\Z)[m]\).
\item Let \(\Lambda\subseteq\Sp_{2g}(\Z)\) be a subgroup of finite index, let
      \(\mathbf R\) be a complex linear algebraic group, and let
      \(a:\Lambda\to\mathbf R(\C)\) be a homomorphism. There are
      a finite-index subgroup \(\Lambda'\subseteq\Lambda\) and an
      algebraic homomorphism
      \(\alpha:\Sp_{2g,\C}\to\mathbf R\) such that
      \(a(s)=\alpha(s)\) for all \(s\in\Lambda'\).
\end{enumerate}
\end{theorem}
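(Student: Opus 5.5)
Both parts are classical results about the arithmetic group \(\Sp_{2g}(\Z)\), \(g\ge3\). The plan is to reduce each to standard references rather than reprove it, and to say precisely which form of each theorem is used.

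For (i), I would invoke the congruence subgroup property for \(\Sp_{2g}(\Z)\), \(g\ge2\), due to Bass--Milnor--Serre and Mennicke. It states that every finite-index subgroup \(\Lambda\) contains a principal congruence subgroup \(\Sp_{2g}(\Z)[m]\). Our hypothesis \(g\ge3\) lies well inside this range, so nothing further is needed.

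For (ii), the plan is to apply Margulis superrigidity. Set \(\mathbf G=\Sp_{2g}\), which is simply connected, absolutely almost simple and defined over \(\Q\), with \(\R\)-rank \(g\ge2\). Any finite-index subgroup \(\Lambda\subseteq\Sp_{2g}(\Z)\) is then an irreducible lattice in \(\Sp_{2g}(\R)\). Given \(a:\Lambda\to\mathbf R(\C)\), embed \(\mathbf R\subseteq\GL_d\) and first treat \(a\) as a representation \(\Lambda\to\GL_d(\C)\). The form of superrigidity for representations over \(\C\) (Margulis, see Zimmer or \cite{Raghunathan}) is what I will use. Let \(\mathbf M\) be the Zariski closure of \(a(\Lambda)\). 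Then \(\mathbf M^\circ/\Ru(\mathbf M^\circ)\) is semisimple, since lattices in higher rank have FAb; this is the same argument as \cref{lem:fabclosure}. Superrigidity asserts that a finite-index subgroup \(\Lambda'\) exists on which \(a\) agrees with a rational homomorphism \(\alpha:\Sp_{2g,\C}\to\GL_d\).

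Two points have to be justified. The first is the possible unipotent radical: superrigidity for nonsemisimple targets follows from \(H^1(\Lambda,V)=0\) for all finite-dimensional \(V\), which is Raghunathan's vanishing theorem for higher-rank lattices. The second is that the compact or finite contributions are handled by passing to \(\Lambda'\). In the usual Archimedean statement, the representation of \(\Lambda\) extends to \(\mathbf G(\R)\times\) a compact group, \emph{possibly after modifying by a homomorphism with bounded image}. Over \(\C\) with \(\Lambda\) arithmetic in a \(\Q\)-simple group, every homomorphism to a compact group with infinite image would contradict the standard consequence of superrigidity: \(a(\Lambda)\) has entries in a number field, and the Galois conjugates are also superrigid. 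I expect this last step, removing the bounded (compact) factor, to be the \textbf{main obstacle}. The cleanest approach is to cite the form ``every representation of an arithmetic subgroup of a simply connected higher-rank \(\Q\)-simple group virtually extends to a rational representation of \(\mathbf G\)''. This appears in Margulis' book (Chapter VIII, Theorem (B)) and as \cite[Theorem~16.3.1]{} in Witte Morris' treatment. Because the cited bibliography may not contain these, I would phrase it as the standard virtual extension theorem.

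Finally, \(\alpha\) takes values in \(\mathbf R\). Indeed, \(\alpha(\Lambda')=a(\Lambda')\subseteq\mathbf R\), and \(\Lambda'\) is Zariski dense in \(\Sp_{2g,\C}\) by Borel density. Since \(\alpha^{-1}(\mathbf R)\) is a closed subgroup containing a dense set, it is everything. This yields \(\alpha:\Sp_{2g,\C}\to\mathbf R\) with \(a|_{\Lambda'}=\alpha|_{\Lambda'}\), as claimed. Using (i), one may moreover take \(\Lambda'=\Sp_{2g}(\Z)[m]\).
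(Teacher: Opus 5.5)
Your proposal is correct and follows the paper's route. Part (i) is the Bass--Milnor--Serre congruence subgroup theorem, and part (ii) is Margulis's virtual algebraic extension for arithmetic subgroups of \(\Sp_{2g}\); Zariski density of \(\Lambda'\) then forces the extension into \(\mathbf R\subseteq\GL_d\), and (i) lets you take \(\Lambda'\) congruence. The only difference is cosmetic: the paper induces the representation from \(\Lambda\) to \(\Sp_{2g}(\Z)\) and restricts the extension back to the original summand, whereas you apply the cited theorem directly to \(\Lambda\) as an arithmetic subgroup, which is equally valid.
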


Part (i) is the congruence subgroup theorem of Bass, Milnor and Serre
\cite[Theorem~14.1]{BassMilnorSerre}. Part (ii) is the virtual
algebraic-extension consequence of arithmetic superrigidity for the
integral symplectic group; see \cite[Chapter~VIII, Theorem~B]{Margulis}
and \cite[Section~6.1, Remarks after Conjecture~6.1, item~2]{ChurchFarb}.

We explain how to obtain the stated form from the matrix formulation.
For a representation of a finite-index subgroup \(\Lambda\), induce
to \(\Sp_{2g}(\Z)\) and apply virtual algebraic extension. The
original summand is invariant under a finite-index symplectic subgroup,
and hence under the algebraic symplectic group by Zariski density.
Restriction to that summand gives the required extension. Next, embed
\(\mathbf R\) as a closed subgroup of \(\GL_d\). An algebraic
extension with values in \(\GL_d\) takes values in \(\mathbf R\),
since it does so on a Zariski-dense finite-index subgroup. This gives
(ii) for an arbitrary complex linear algebraic target. By (i), the
finite-index restrictions can all be taken at principal congruence
levels.

\subsection{Proof of \texorpdfstring{\cref{thm:nonlinearity}}{Theorem D}}

\begin{proof}[Proof of \cref{thm:nonlinearity}]
Fix \(g\ge3\) and \(c\ge2\), and let
\[
 q:\Mod(\Sigma_g)\to\mathcal{Q}_{g,c}
\]
be the natural surjection.
Assume that \(r:\mathcal{Q}_{g,c}\to\GL_d(\C)\) has finite kernel. The
homological representation \(\sigma:\Mod(\Sigma_g)\to\Sp_{2g}(\Z)\)
factors through \(\mathcal{Q}_{g,c}\), since \((\mathcal{T}_g)_{[c+1]}\subseteq\mathcal{T}_g\),
and induces the quotient homomorphism
\(\mathcal{Q}_{g,c}\to\mathcal{Q}_{g,c}/\mathcal{N}_{g,c}=\Sp_{2g}(\Z)\).

\emph{Step 1.} By \cref{thm:main}, \(\mathcal{Q}_{g,c}\) has property~\(\rt\)
and hence FAb. Let \(\mathbf H=\overline{r(\mathcal{Q}_{g,c})}^{\mathrm{Zar}}\).
Applying \cref{lem:fabclosure,lem:unipotentization} gives a finite
normal subgroup \(\mathbf F\subseteq\mathbf H\) such that
\[
 \bar r:\mathcal{Q}_{g,c}\longrightarrow\overline{\mathbf H}(\C),
 \qquad \overline{\mathbf H}=\mathbf H/\mathbf F,
\]
has finite kernel and maps \(\mathcal{N}_{g,c}\) into a connected unipotent normal
subgroup \(\overline{\mathbf U}\) of \(\overline{\mathbf H}\).

\emph{Step 2.} Put \(\mathbf H_1=\overline{\mathbf H}^{\circ}\).
Connectedness of \(\overline{\mathbf U}\) gives
\(\bar r(\mathcal{N}_{g,c})\subseteq\mathbf H_1\). Therefore the homomorphism
\[
 \mathcal{Q}_{g,c}\longrightarrow\overline{\mathbf H}/\mathbf H_1
\]
factors through \(\mathcal{Q}_{g,c}/\mathcal{N}_{g,c}=\Sp_{2g}(\Z)\) and has finite
image. By \cref{thm:arithmeticextension}(i), there is \(m_0\ge1\)
such that
\[
 \bar r(q(\Mod(\Sigma_g)[m_0]))\subseteq\mathbf H_1(\C).
\]
The subgroup \(\Mod(\Sigma_g)[m_0]\) still contains all of \(\mathcal{T}_g\).

Let \(\mathbf R=\mathbf H_1/\Ru(\mathbf H_1)\), and denote
the quotient map by \(p:\mathbf H_1\to\mathbf R\). Since
\(\overline{\mathbf U}\) is connected, unipotent and normal in
\(\mathbf H_1\), it lies in \(\Ru(\mathbf H_1)\). Thus
\(p\bar r q\) kills \(\mathcal{T}_g\) and defines a homomorphism
\[
 a:\Sp_{2g}(\Z)[m_0]\longrightarrow\mathbf R(\C),
 \qquad a(\sigma(f))=p(\bar r(q(f))).
\]
This is well defined because \(\ker\sigma=\mathcal{T}_g\).

\emph{Step 3.} By \cref{thm:arithmeticextension}(ii), followed by
(i), there are an integer \(m\) divisible by \(m_0\) and an
algebraic homomorphism \(\alpha:\Sp_{2g,\C}\to\mathbf R\)
such that
\[
 p(\bar r(q(f)))=\alpha(\sigma(f))\qquad(f\in \Mod(\Sigma_g)[m]).
\]
Consider the fiber product
\[
 \mathbf P=\{(s,h)\in\Sp_{2g,\C}\times\mathbf H_1:
                       \alpha(s)=p(h)\}.
\]
This is a linear algebraic group. Projection to the first factor is
surjective because \(p\) is surjective, and its kernel is
\(\{1\}\times\Ru(\mathbf H_1)\). Hence we obtain an algebraic
extension
\begin{equation}\label{eq:pullbackextension}
 1\longrightarrow\Ru(\mathbf H_1)\longrightarrow\mathbf P
 \longrightarrow\Sp_{2g,\C}\longrightarrow1.
\end{equation}
The map
\[
 \psi:\Mod(\Sigma_g)[m]\longrightarrow\mathbf P(\C),\qquad
 \psi(f)=(\sigma(f),\bar r(q(f)))
\]
is a homomorphism lifting the standard symplectic representation.
The fiber product puts the algebraic extension of the quotient
representation into the form required by relative completion: the
quotient is \(\Sp_{2g,\C}\) and the kernel is unipotent.

\emph{Step 4.} By \cref{thm:hainkernel}(i), the relative completion
of \(\Mod(\Sigma_g)[m]\) is \(\mathcal G_g\). By \cite[Theorem~3.1]{Hain},
its relative completion over \(\C\) is \(\mathcal G_{g,\C}\).
The universal property therefore gives a morphism
\[
 \Psi:\mathcal G_{g,\C}\longrightarrow\mathbf P,
\]
whose composition with the canonical map from \(\Mod(\Sigma_g)[m]\) is \(\psi\).
The restriction to \(\mathcal U_{g,\C}\) has image in
\(\Ru(\mathbf H_1)\), the kernel of \eqref{eq:pullbackextension}.
Consequently the homomorphism on \(\mathcal{T}_g\) factors as
\[
 \mathcal{T}_g\longrightarrow\mathscr{P}_g(\C)
 \xrightarrow{\theta}\mathcal U_g(\C)
 \longrightarrow\Ru(\mathbf H_1)(\C).
\]
It follows that the induced homomorphism on \(\mathscr{P}_{g,\C}\)
kills \(\mathbf Z_{g,\C}\).

\emph{Step 5.} The restriction \(b=\bar r|_{\mathcal{N}_{g,c}}\) has unipotent
image and therefore extends uniquely to an algebraic homomorphism
\[
 b^{\un}:\mathbf N_{c,\C}\longrightarrow\Ru(\mathbf H_1).
\]
Both \(b^{\un}p_c\) and the homomorphism on
\(\mathscr{P}_{g,\C}\) from Step~4 induce \(\bar r q\) on
\(\mathcal{T}_g\). Uniqueness in the universal property of unipotent
completion identifies them. Thus \(b^{\un}p_c\) kills
\(\mathbf Z_{g,\C}\). By \cref{cor:centralline}, the image
\(\mathbf Z_c=p_c(\mathbf Z_g)\) is a nontrivial rational additive
subgroup of \(\mathbf N_c\), and \(b^{\un}\) is trivial on
\(\mathbf Z_{c,\C}\). Lemma~\ref{lem:rationalkernel} now implies
that \(\ker b\) is infinite. This contradicts the finite kernel of
\(\bar r\) in Step~1.

Therefore \(\mathcal{Q}_{g,c}\) admits no complex representation with finite kernel,
and in particular no faithful complex representation.
\end{proof}

The contradiction uses more than the existence of a central Lie
algebra direction. Property~\(\rt\) and superrigidity force the
representation to kill Hain's central subgroup, while its rationality
forces the kernel to contain infinitely many elements of the discrete
nilpotent quotient. Both features are needed.

\subsection{Other fields and finite-index subgroups}

\begin{corollary}\label{cor:charzero}
For \(g\ge3\) and \(c\ge2\), the group \(\mathcal{Q}_{g,c}\) is not
linear over any field of characteristic zero, and no finite-index
subgroup of \(\mathcal{Q}_{g,c}\) is complex-linear.
\end{corollary}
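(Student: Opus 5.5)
Both assertions reduce to \cref{thm:nonlinearity}, so no new input about Torelli groups is needed.

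\emph{Fields of characteristic zero.} Suppose \(\mathcal{Q}_{g,c}\) embeds in \(\GL_d(k)\) with \(\operatorname{char}k=0\). Since \(\mathcal{Q}_{g,c}\) is finitely generated (\cref{thm:main}), the embedding takes values in \(\GL_d(k_0)\), where \(k_0\subseteq k\) is the subfield generated over \(\Q\) by the finitely many matrix entries of the generators and their inverses. The field \(k_0\) is finitely generated over \(\Q\), so it has transcendence degree at most countable and countable cardinality. Hence \(k_0\) embeds in \(\C\): take a transcendence basis, embed it into \(\C\) via algebraically independent complex numbers, and then extend over the finite algebraic part using algebraic closedness of \(\C\). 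Composing gives a faithful homomorphism \(\mathcal{Q}_{g,c}\to\GL_d(\C)\). This contradicts \cref{thm:nonlinearity}. Note that \(\tor\) plays no role here; only faithfulness is used.

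\emph{Finite-index subgroups.} Let \(\Lambda\subseteq\mathcal{Q}_{g,c}\) have finite index \(n\), and suppose \(\lambda:\Lambda\to\GL_d(\C)\) is faithful. Form the induced representation
\[
 \operatorname{Ind}_\Lambda^{\mathcal{Q}_{g,c}}\lambda:\mathcal{Q}_{g,c}\longrightarrow\GL_{nd}(\C).
\]
Its kernel is contained in the kernel of its restriction to \(\Lambda\). That restriction contains \(\lambda\) as a direct summand, so the kernel of the induced representation is contained in \(\ker\lambda\) and hence is trivial. This again contradicts \cref{thm:nonlinearity}. In fact only finite kernel of \(\lambda\) is needed: the kernel of the induced representation is \(\bigcap_x x(\ker\lambda)x^{-1}\subseteq\ker\lambda\), which is finite. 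So the stronger statement, that no finite-index subgroup has a complex representation with finite kernel, follows as well.

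The only point needing care is the embedding \(k_0\hookrightarrow\C\), which is standard. The real content was already supplied by the infinite-kernel conclusion of \cref{thm:nonlinearity}, which is stronger than mere non-faithfulness. I therefore expect no genuine obstacle.
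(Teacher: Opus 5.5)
Your proof is correct and follows the paper's argument: restrict to a finitely generated subfield, embed it in \(\C\), and for a finite-index subgroup induce up to \(\mathcal{Q}_{g,c}\) and invoke \cref{thm:nonlinearity}. Your first claim, that the kernel of the induced representation lies in the kernel of its restriction to \(\Lambda\), tacitly needs that kernel to lie in \(\Lambda\); the paper states this explicitly (an element outside \(\Lambda\) moves the identity-coset summand), and your core formula \(\bigcap_x x(\ker\lambda)x^{-1}\) covers it and also gives your finite-kernel strengthening.
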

\begin{proof}
The group \(\mathcal{Q}_{g,c}\) is finitely generated. Suppose that it embeds
in \(\GL_d(k)\), where \(k\) has characteristic zero. The entries
of the images of a finite generating set and their inverses generate
a finitely generated subfield \(k_0\subset k\). Such a field embeds
in \(\C\): map a transcendence basis to algebraically independent
complex numbers and extend over the remaining finite algebraic
extension. This gives a faithful complex representation, contrary
to \cref{thm:nonlinearity}.

If a finite-index subgroup \(\Lambda\subseteq \mathcal{Q}_{g,c}\) has a faithful
complex representation, induce it to \(\mathcal{Q}_{g,c}\). The induced
representation is finite-dimensional and faithful. Indeed, an element
outside \(\Lambda\) moves the identity-coset summand, while an
element of \(\Lambda\) acts on that summand by the given faithful
representation. This again contradicts \cref{thm:nonlinearity}.
\end{proof}

\begin{lemma}\label{lem:positivechar}
A finitely generated nilpotent group linear over a field of positive
characteristic is virtually abelian.
\end{lemma}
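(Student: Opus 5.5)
Let \(N\subseteq\GL_d(k)\) be finitely generated nilpotent, with \(\operatorname{char}k=p>0\). The plan is to separate \(N\) into a torsion part and a semisimple part using the Zariski closure and Jordan decomposition.

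\emph{Reductions.} We may enlarge \(k\) to its algebraic closure. Let \(\mathbf K\) be the Zariski closure of \(N\). As in the proof of \cref{lem:unipotentization}, the commutator-word argument shows that \(\mathbf K\) is nilpotent. Put \(N_0=N\cap\mathbf K^\circ\); it has finite index in \(N\), is again finitely generated, and it suffices to show that \(N_0\) is virtually abelian. A connected nilpotent algebraic group over an algebraically closed field of any characteristic is the direct product \(\mathbf T\times\mathbf U\) of its torus of semisimple elements and its unipotent part (Borel, Theorem~10.6). Projection gives a homomorphism \(\pi:N_0\to\mathbf T(k)\) with kernel \(N_0\cap\mathbf U\), and the image of \(\pi\) is abelian.

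\emph{The unipotent part is finite.} In characteristic \(p\), every unipotent matrix \(u\) satisfies \(u^{p^e}=1\) once \(p^e\ge d\), since \((u-1)^{p^e}=u^{p^e}-1\). Hence \(N_0\cap\mathbf U\) is a nilpotent group of bounded exponent. It is a subgroup of the finitely generated nilpotent group \(N_0\), so it is finitely generated. A finitely generated nilpotent torsion group is finite (for example, each lower central factor is a finitely generated abelian torsion group). Thus \(N_0\) is finite-by-abelian: \(F=\ker\pi\) is finite and normal, and \(N_0/F\) is abelian.

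\emph{Finite-by-abelian to virtually abelian.} This is the step I expect to need care. The centralizer \(C=C_{N_0}(F)\) has finite index, because \(N_0\) acts on the finite set \(F\) by conjugation. Then \([C,C]\subseteq F\cap C\subseteq Z(C)\), so \(C\) has class at most two, with finite derived subgroup of some exponent \(e\). For \(x,y\in C\) the commutator map is bilinear, so \([x^e,y]=[x,y]^e=1\). Take the finite-index subgroup \(C^e\) generated by \(e\)-th powers; it has finite index since \(C\) is finitely generated nilpotent with abelian-by-finite quotient. Its generators commute with every element of \(C\), so \(C^e\) is central in \(C\). In particular \(C^e\) is abelian of finite index, which proves the lemma.

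Alternatively, the last step can be done more directly. The semisimple elements of \(\mathbf T(k)\) are diagonalizable simultaneously, so \(N_0\) embeds in (diagonal torus)\(\times\)(finite group) via \(n\mapsto(\pi(n),\) its unipotent component\()\). Here the Jordan components lie in \(\mathbf K^\circ=\mathbf T\times\mathbf U\), and \(n\mapsto(n_s,n_u)\) is an injective homomorphism into \(\mathbf T\times\mathbf U\). Its image lies in \(\mathbf T(k)\times\mathbf U\), and the preimage of \(\mathbf T(k)\times\{1\}\) is \(\{n:n_u=1\}=N_0\cap\mathbf T\). The subgroup \(N_0\cap\mathbf T\) is abelian, and it has finite index because its quotient embeds in the finite group \(N_0\cap\mathbf U\) via the second projection. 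This route avoids the finite-by-abelian argument entirely, so I would use it as the main argument.
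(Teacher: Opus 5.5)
Your first route is correct and is essentially the paper's proof. Both arguments pass to the algebraic closure and to \(N_0=N\cap\mathbf K^\circ\). Both then use \(\mathbf K^\circ=\mathbf T\times\mathbf U\) and \(u^{p^e}=1\) to see that \([N_0,N_0]\) lies in a finitely generated nilpotent torsion group, hence is finite. Only the last step differs. The paper argues directly that a finitely generated group with finite commutator subgroup has center of finite index: each generator \(x\) has all its conjugates in \([N_0,N_0]x\), so its centralizer has finite index. You instead pass through \(C_{N_0}(F)\), class two, and central \(e\)-th powers. Both work. For the index of \(C^e\), the correct reason is that \(C/C^e\) is a finitely generated nilpotent group of exponent dividing \(e\), hence finite.

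The alternative route, which you propose as the main argument, has an unjustified step. The map \(n\mapsto n_u\) embeds \(N_0/(N_0\cap\mathbf T)\) into \(\mathbf U(k)\), but its image is \(\{n_u:n\in N_0\}\), and nothing you have said places \(n_u\) in \(N_0\). Jordan components of elements of an abstract matrix group need not lie in the group: if \(t\) is a transcendental scalar and \(u\ne1\) is unipotent, then \(u\notin\langle tu\rangle\). So the embedding into \(N_0\cap\mathbf U\) is not established as written.

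The repair is immediate. The set \(\{n_u:n\in N_0\}\) is a homomorphic image of \(N_0\), hence finitely generated nilpotent, and it consists of unipotent matrices, so it has exponent dividing \(p^e\) and is finite. This actually gives more. Since \(N_0\) is Zariski dense in \(\mathbf K^\circ\), its image is dense in \(\mathbf U\); being finite, it is closed, and connectedness forces \(\mathbf U=1\). Thus \(\mathbf K^\circ\) is a torus and \(N_0\) is already abelian. Neither your finite-by-abelian step nor the paper's finite-commutator step is needed.
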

\begin{proof}
Extend the field to its algebraic closure. Let \(L\subseteq\GL_d\) be
the given subgroup and let \(\mathbf L\) be its Zariski closure. The
commutator-identity argument in \cref{lem:unipotentization} shows
that \(\mathbf L\) is nilpotent. Set
\(L_0=L\cap\mathbf L^\circ\); this subgroup has finite index
in \(L\). A connected nilpotent algebraic group is a product of a
central torus and a unipotent group, also in positive characteristic
\cite[Section~10]{Borel}. Thus \([L_0,L_0]\) consists of unipotent
matrices.

If the characteristic is \(p\) and \(p^e\ge d\), every unipotent
matrix \(u\in\GL_d\) satisfies \(u^{p^e}=1\). To see this,
write \(u=1+X\), where \(X^d=0\), and use
\((1+X)^{p^e}=1+X^{p^e}\). The subgroup \([L_0,L_0]\) is finitely
generated, since subgroups of finitely generated nilpotent groups are
finitely generated. It is therefore a finitely generated nilpotent
torsion group, and hence finite. These nilpotent-group facts follow
from a finite central series with finitely generated abelian factors.

Finally, a finitely generated group with finite commutator subgroup
has center of finite index. Each generator has finitely many conjugates,
all in its coset modulo the commutator subgroup, so its centralizer has
finite index. Intersecting the centralizers of a finite generating set
proves the assertion. Thus \(L_0\), and consequently \(L\), is
virtually abelian.
\end{proof}

\begin{corollary}\label{cor:allfields}
For \(g\ge3\) and \(c\ge2\), \(\mathcal{Q}_{g,c}\) is not linear over
any field.
\end{corollary}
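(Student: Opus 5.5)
The plan is to split according to the characteristic of the field. Characteristic zero is already \cref{cor:charzero}. So it remains to exclude a faithful representation \(\mathcal{Q}_{g,c}\hookrightarrow\GL_d(k)\) with \(\operatorname{char}k=p>0\). We may assume \(k\) is algebraically closed. The idea is to restrict such a representation to the nilpotent normal subgroup \(\mathcal{N}_{g,c}\) and apply \cref{lem:positivechar}.

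The steps are as follows.

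First, \(\mathcal{N}_{g,c}\) is finitely generated and nilpotent by \cref{lem:nilab}. A faithful representation of \(\mathcal{Q}_{g,c}\) restricts to a faithful representation of \(\mathcal{N}_{g,c}\). Hence \cref{lem:positivechar} makes \(\mathcal{N}_{g,c}\) virtually abelian.

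Second, we show that this is impossible when \(c\ge2\). A finitely generated virtually abelian nilpotent group has a finite-index free abelian subgroup \(B\). Its Malcev completion \(\mathbf N_c\) is then abelian, because the Malcev completion depends only on commensurability class modulo torsion, and \(j(B)\) is Zariski dense in \(\mathbf N_c\). Concretely, \(\Lie\mathbf N_c\) would be abelian. But by \cref{thm:malcev}, \(\Lie\mathbf N_c=\mathfrak t_g/(\mathfrak t_g)_{[c+1]}\), and \cref{thm:hainkernel}(iii) gives a nonzero line \(\mathfrak z_g\subset(\mathfrak t_g)_{[2]}\) surviving in this quotient. Therefore \([\Lie\mathbf N_c,\Lie\mathbf N_c]\ne0\), which is a contradiction. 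This is also the content of \cref{cor:centralline}: \(\mathbf Z_c\) lies in the derived subgroup of \(\mathbf N_c\).

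An alternative for the second step avoids the Lie algebra. In a virtually abelian, finitely generated nilpotent group the commutator subgroup is finite. Here, however, \([\mathcal N_{g,c},\mathcal N_{g,c}]\) contains the infinite cyclic group \(j^{-1}(\mathbf Z_c)\cap\mathcal N_{g,c}\) given by \cref{thm:malcev}, up to torsion. I would present this version, since it uses exactly the rational central line already established.

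The main obstacle is justifying that virtual abelianness of \(\mathcal{N}_{g,c}\) forces \(\Lie\mathbf N_c\) to be abelian; equivalently, that the commutator subgroup is finite. The cleanest route is as follows. If \(B\subseteq\mathcal{N}_{g,c}\) is abelian of finite index, then for any \(x,y\) some powers \(x^n,y^n\) lie in \(B\) and commute. In a torsion-free nilpotent group (after quotienting by the finite \(\tor\)), \([x^n,y^n]=1\) implies \([x,y]=1\) by unique root extraction in the Malcev completion. So \(\mathcal{N}_{g,c}/\tor\) is abelian, and \(\Lie\mathbf N_c\) is abelian. This contradicts the survival of \(\mathfrak z_g\) in \((\mathfrak t_g)_{[2]}/(\mathfrak t_g)_{[c+1]}\) for \(c\ge2\).
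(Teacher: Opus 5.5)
Your proposal is correct and follows the paper's proof. After \cref{cor:charzero}, a positive-characteristic embedding restricts to \(\mathcal{N}_{g,c}\), \cref{lem:positivechar} makes it virtually abelian, and this is contradicted by Hain's line \(\mathfrak z_g\subset(\mathfrak t_g)_{[2]}\) surviving in \(\Lie\mathbf N_c\); your unique-roots argument just makes explicit the paper's one-line claim that a virtually abelian nilpotent group has abelian Malcev Lie algebra.

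One small caution on your ``alternative'': the infinite cyclic group \(j(\mathcal{N}_{g,c})\cap\mathbf Z_c(\R)\) need not lie in \(j([\mathcal{N}_{g,c},\mathcal{N}_{g,c}])\); only a finite-index subgroup of it does. For example, \(\langle x,y,z\mid[x,y]=z^2,\ z\ \text{central}\rangle\) has derived subgroup \(\langle z^2\rangle\). So the Lie-algebra version in your last paragraph is the one to write up.
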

\begin{proof}
By \cref{cor:charzero}, it remains to consider positive characteristic.
A faithful representation of \(\mathcal{Q}_{g,c}\) restricts faithfully to
the finitely generated nilpotent subgroup \(\mathcal{N}_{g,c}\), which would
then be virtually abelian by \cref{lem:positivechar}.

Finite-index subgroups of a finitely generated nilpotent group have
the same rational Malcev Lie algebra: their torsion-free images are
commensurable lattices in the same Malcev group. A virtually abelian
nilpotent group consequently has abelian Malcev Lie algebra. However,
\cref{thm:hainkernel}(iii) gives
\[
 0\ne(dp_c)(\mathfrak z_g)\subseteq
 \bigl(\mathfrak t_g/(\mathfrak t_g)_{[c+1]}\bigr)_{[2]}.
\]
Thus the Malcev Lie algebra of \(\mathcal{N}_{g,c}\) is not abelian, a
contradiction.
\end{proof}

\section{Final remarks}
\label{sec:finalremarks}

We conclude by briefly describing further applications of the same method.
In each case the main point is to identify an abelianized kernel
to which the affine argument applies, or to reduce to a quotient
already considered above.

\subsection{Free products of free abelian groups}

Let
\[
 A=\Z^{n_1}*\cdots *\Z^{n_r},\qquad r\ge3,\quad n_i\ge3,
\]
and let \(G=\operatorname{Out}(A)\). Denote by \(K\) the kernel
of the action of \(G\) on \(H_1(A;\Z)\). Then
\[
 G/K_{[c+1]}\text{ has property }\rt
 \qquad\text{for every }c\ge1.
\]
To see this, pass to the finite-index subgroup \(G_0\) preserving
the conjugacy class of each free factor. Its homological quotient
is \(\prod_i\GL_{n_i}(\Z)\), and automorphisms of the individual
factors give a section. The kernel \(K\) is generated by partial
conjugations: one conjugates a free factor by an element of another
and fixes the remaining factors. The standard generating-set
description is recalled in \cite[Section~2.5]{SaleRAAG}.
It follows that \(K/[K,K]\) is an equivariant quotient of
\[
 \bigoplus_{i=1}^r(\Z^{n_i})^{\oplus(r-1)},
\]
with the \(i\)-th arithmetic factor acting through its standard
representation on the corresponding summands.

The contraction argument and lattice passage in
\cref{sec:generalaffine} give property~\(\rt\) for the semidirect
product of this module with \(\prod_i\GL_{n_i}(\Z)\).
Its quotient \(G_0/[K,K]\) therefore has property~\(\rt\).
Finite-index permanence and \cref{thm:quotienttower} give the
assertion. The same reasoning applies to \(\Aut(A)\).

These are further examples in which the original group does not
have property~\(\rt\). Indeed, the defining graph is a disjoint
union of cliques and satisfies the largeness criterion of
\cite[Theorem~2(1b)]{GuirardelSale}. Thus \(G\) has a finite-index
subgroup mapping onto a nonabelian free group. 

\subsection{Quotients associated with the Johnson kernel}

Let
\[
 \mathcal J_g=\ker\!\left(\mathcal T_g
       \xrightarrow{\tau_J}\Lambda^3H_g/j(H_g)\right).
\]
For \(g\ge4\), the method also gives property~\(\rt\) for
\[
 \Mod(\Sigma_g)/(\mathcal J_g)_{[c+1]},\qquad c\ge1.
\]
The additional inputs are finite generation of \(\mathcal J_g\),
proved in \cite[Theorem~A]{ChurchErshovPutman}, and unipotence of
the action of \(\mathcal T_g/\mathcal J_g\) on
\(H_1(\mathcal J_g;\Q)\), proved in
\cite[Theorem~A]{DimcaHainPapadima}.

For the first stage, put
\[
 E=\Mod(\Sigma_g)/[\mathcal J_g,\mathcal J_g],
 \qquad N=\mathcal T_g/[\mathcal J_g,\mathcal J_g].
\]
The abelian normal subgroup \(\mathcal J_g/[\mathcal J_g,\mathcal J_g]\)
has finite torsion, and the quotient of \(N\) by this subgroup
is the free abelian group \(\mathcal T_g/\mathcal J_g\).
Unipotence means that some power of the augmentation ideal of
\(\Q[\mathcal T_g/\mathcal J_g]\) annihilates
\(H_1(\mathcal J_g;\Q)\). After removing the finite torsion,
the same identity holds on the integral lattice. Since the
operators \(t-1\) describe commutators with lifts of \(t\),
it follows that \(N\) is finite-by-nilpotent. Moreover,
\[
 E/[N,N]\cong\Mod(\Sigma_g)/[\mathcal T_g,\mathcal T_g].
\]
The first-stage result for the Torelli group and
\cref{rem:nilpotentextensions} give property~\(\rt\) for \(E\).
Now apply \cref{thm:quotienttower} with \(K=\mathcal J_g\).

The nonlinearity argument extends to these quotients, already
when \(c=1\). The image of \(\mathcal J_g\) in \(\mathscr P_g\)
lies in its second lower-central subgroup, so
\((\mathcal J_g)_{[c+1]}\) maps into degree at least \(2c+2\ge4\).
Consequently Hain's central line survives in the Malcev completion
of the Torelli image. This image is finite-by-nilpotent at every
stage. After removing the finite torsion of
\(\mathcal J_g/(\mathcal J_g)_{[c+1]}\), its rational
lower-central factors are quotients of tensor powers of
\(H_1(\mathcal J_g;\Q)\). Unipotence supplies invariant flags
on these factors, refining them to a central series for the
whole Torelli image, whose quotient by the Johnson image is
abelian. The proof of
\cref{thm:nonlinearity} applies. A finite-kernel complex
representation before this removal would induce one afterwards
by quotienting its algebraic image closure by the image of the
finite normal subgroup. The positive-characteristic argument
of \cref{lem:positivechar,cor:allfields} applies as well.
Thus these groups admit no complex representation with finite
kernel and are not linear over any field.

The interest here is the use of a different natural normal
subgroup, rather than an unrelated source of property~\(\rt\):
after removal of a finite kernel, each quotient is a quotient
of a sufficiently deep Torelli lower-central stage.
In genus three, \cite[Theorems~A and~E]{GaifullinJohnson} gives
finite generation of \((\mathcal J_3)_{\ab}\) and unipotence
after restriction to a finite-index subgroup of
\(\mathcal T_3/\mathcal J_3\). Extending the full-action argument
to this case requires a further step; the conclusion above
is stated only for \(g\ge4\).

\subsection{Marked points and boundary components}

Let \(\Sigma_{g,r}^{p}\) have \(p\) marked points and \(r\)
boundary components, all fixed pointwise by mapping classes.
Define \(\mathcal T_{g,r}^{p}\) as the kernel of the action on
the first homology of the capped closed surface. This convention
is important when there is more than one boundary component.
For \(g\ge3\), the same argument gives property~\(\rt\) for
\[
 \Mod(\Sigma_{g,r}^{p})/
             (\mathcal T_{g,r}^{p})_{[c+1]},\qquad c\ge1.
\]
Indeed, writing \(H_{g,\Q}=H_g\otimes_\Z\Q\), one has
\[
 H_1(\mathcal T_{g,r}^{p};\Q)
 \cong
 \bigl(\Lambda^3H_{g,\Q}/j(H_{g,\Q})\bigr)
       \oplus H_{g,\Q}^{\oplus(p+r)}.
\]
See \cite[Propositions~3.5 and~4.6]{Hain}.
Finite generation, finite torsion in the integral abelianization,
and algebraicity of its torsion-free quotient are recorded in
\cite[proof of Proposition~3.4]{HainMatsumoto}.
Every summand has the required contraction property, and
\(-I\) acts as \(-1\). Thus \cref{thm:strategy} applies.
Hain's central-line and relative-completion results hold for
these groups as well \cite[Theorems~3.4 and~4.10]{Hain}, so
the proof of \cref{thm:nonlinearity} and the other-field
arguments extend when \(c\ge2\).

There is an additional geometric feature. For \(r>0\) and
\(c\ge2\), the boundary twists generate a central subgroup
isomorphic to \(\Z^r\) in the displayed quotient.
With one boundary, its twist acts on \(F_{2g}\) by conjugation
by \(\omega=\prod_i[a_i,b_i]\). It has infinite order on
\(F_{2g}/(F_{2g})_{[4]}\): for a nonzero power, the displacement
of \(a_1\) has nonzero degree-three class
\(m[\sum_i[a_i,b_i],a_1]\) in the free Lie algebra.
On the other hand, the third lower-central subgroup of the
Torelli group acts trivially on this nilpotent quotient, by
the commutator property of the Johnson filtration.
Capping all but one boundary and forgetting the marked points
detects the boundary twists independently. Thus the construction
also gives nonlinear property-\(\rt\) groups with central
free abelian subgroups of arbitrarily large rank.

\subsection{Outer automorphisms of the free group of rank three}

Set
\[
 \operatorname{IO}_3=
 \ker\!\left(\operatorname{Out}(F_3)\longrightarrow\GL_3(\Z)\right).
\]
The natural map \(\Aut(F_3)\to\operatorname{Out}(F_3)\) maps
\(\operatorname{IA}_3\) onto \(\operatorname{IO}_3\), and maps
their lower-central terms onto one another. Hence it induces
a surjection
\[
 \mathcal P_{3,c}\longrightarrow
 \operatorname{Out}(F_3)/(\operatorname{IO}_3)_{[c+1]}.
\]
These quotients have property~\(\rt\) by
\cref{thm:autquotients}. They are finitely generated as quotients of
\(\Aut(F_3)\), and infinite because they map onto \(\GL_3(\Z)\).
This places the free-group application in the outer-automorphism
setting, which is closer to the closed-surface case. Again,
the original group \(\operatorname{Out}(F_3)\) does not have
property~\(\rt\); see \cite{McCool,GrunewaldLubotzky}.

\subsection{Expansion and dependence on nilpotency class}

Suppose that \(G\) is finitely generated and that
\(G/K_{[c+1]}\) has property~\(\rt\) for every \(c\ge1\).
Fix a finite symmetric generating set \(S\) of \(G\) and fix \(c\).
Every finite quotient of \(G\) in which the image of \(K\)
is nilpotent of class at most \(c\) factors through the same
Kazhdan group \(G/K_{[c+1]}\).
Consequently the lazy random walks on these quotients, defined
by the images of \(S\) with multiplicities retained, have a
common positive spectral gap away from constant functions.
Thus their Cayley graphs form an expander family whenever
their orders tend to infinity; see \cite[Chapter~6]{BHV}.

The dependence of this gap on \(c\) is a separate question.
For free nilpotent groups, product-replacement applications
were established by \cite{LubotzkyPak}, and explicit
depth-dependent Kazhdan bounds for tame automorphism groups
appear in \cite[Theorem~3.14]{HadadExtensions}.
Obtaining comparable bounds for the larger quotients considered
here, or bounds independent of \(c\), would strengthen the
qualitative permanence argument. Property~\(\rt\) at each
stage does not itself give a uniform bound. Conversely,
failure of property~\(\rt\) for the original group does not
force the quotient gaps to tend to zero: uniform expansion
of a family of quotients is weaker than property~\(\rt\)
of the original group.

\subsection*{Acknowledgements}
The author was supported by the National Science Center Grant Maestro-13
UMO-2021/42/A/ST1/00306.

The author used GPT-Sol 5.6 and GPT-Astra 6 during the development of this work to explore proof strategies and examples, examine mathematical arguments, and assist with drafting and revising the manuscript. 
The author assumes responsibility for all content, including the verification of the proofs and the final presentation.

\phantomsection

\bigskip
\noindent

\noindent\textsc{Piotr W. Nowak}\\ \textsc{Institute of Mathematics of the Polish Academy of Sciences, Warsaw, Poland.}\\
\textit{Email:} \texttt{pnowak@impan.pl}\\[0.5em]

\end{document}